\newtheorem{theorem}{Theorem}[section]
\newtheorem{corollary}{Corollary}[theorem]
\newtheorem{lemma}[theorem]{Lemma}
\newtheorem{assumption}[theorem]{Assumption}
\theoremstyle{remark}
\theoremstyle{definition}
\newcommand{\myitem}[1]{%
\item[#1]\protected@edef\@currentlabel{#1}%
}
\title[ ]{Numerical approximation of linear parabolic evolution equations revisited}
\author[Ø.~S.~Auestad]{Øyvind S. Auestad}
\address{\newline Department of Mathematical Sciences Norwegian University of Science and Technology, \newline 7034 Trondheim, Norway.} \email{oyvinau@ntnu.no}
\begin{document}

\begin{abstract}
    We obtain rates of convergence of numerical approximations of abstract linear parabolic evolution equations in Banach spaces. Our estimates extend known results from the literature of finite element approximations of parabolic equations to more general equations and numerical approximation methods. As an example, we consider parabolic equations on surfaces and surface finite element approximations. 
\end{abstract}

\date{\today}
\subjclass{35A24, 35A25, 35K15, 35K05, 65L60.}
\keywords{Abstract evolution equations, parabolic PDEs, sectorial operators, finite element methods.}
\thanks{The research of the author was supported by Grant No. 325114 of the Norwegian Research Council.}

\maketitle

\section{introduction}

We derive semidiscrete and fully discrete error estimates of numerical approximations of abstract linear evolution equations,
\begin{align}\label{eq:linear-evolution-equation}
    \dot{u} = -A u, \quad u(0) = x \in X,
\end{align}
where $X$ is a Banach space and $-A$ is the generator of an analytic semigroup on $X$. The main result of this paper is Theorem \ref{theorem:1}, which is an abstract semidiscrete error estimate that generalize established estimates from the literature of finite element approximations of parabolic equations. In particular, when $A$ is a second order elliptic operator and $X = L^2(\Omega)$ for some regular domain $\Omega$, one can show that under appropriate conditions there is $\lambda \geq 0$ and $C, \epsilon > 0$ such that
\begin{align*}
    \Vert u(t) - u_h(t) \Vert_X \leq C e^{(\lambda - \epsilon) t} t^{-\theta / 2 + \rho / 2} h^{\theta} \Vert (\lambda + A)^{\rho / 2} x \Vert_X, \quad \theta \in [0,2], \ \rho \in [-1, \theta] \cap [-2 + \theta, \theta],
\end{align*}
where $0 < h < 1$ and $u_h$ is the ordinary piecewise linear Galerkin approximation of $u$. In the case of $A$ selfadjoint, the non-smooth data estimate $\theta = 2, \rho = 0$ and the smooth data estimate $\theta = 2, \rho = 2$ follows by the results of \cite{1974-helfrich}. In the more general case of $A$ not necessarily selfadjoint, \cite{1976-fujita-mizutani} showed the non-smooth data estimate, smooth data estimates with $\theta = 1, \rho = 1$ and $\theta = 2, \rho = 1$, in addition to similar estimates in the $H^1(\Omega)$-norm, while \cite{2013-tambue} showed the smooth data estimate $\theta = 2, \rho = 2$. Finally, non-smooth data estimates with $\rho \leq 0$ were shown in the case of $A$ selfadjoint in \cite{2014-kruse}.

We extend these error estimates to a broader class of equations and numerical methods. First, in Theorem \ref{theorem:1}, which holds for evolution equations in Banach spaces where $-A$ generates an analytic semigroup, and its numerical approximation is only required to be uniformly sectorial (see Assumption \ref{assumption:abstract-model} for precise conditions). Then, in Lemma \ref{lemma:1} and \ref{lemma:2}, which under similar conditions covers semidiscrete error estimates in norms related to fractional powers of the (possibly shifted) operator $A$, and finally, in Lemma \ref{lemma:negative-rho} which is an error estimate accommodating less regular data than Theorem \ref{theorem:1}. As an example, we show in Corollary \ref{cor:semidiscrete-numerical-example} that this covers equations involving elliptic operators on surfaces, and the surface finite element approximation proposed in \cite{1988-dziuk}. A key component of all proofs are the interpolation inequalities of Lemma \ref{lemma:interpolation-inequality}, which includes a Heinz--Kato type inequality for operators on Banach spaces. 

Further, we derive similar fully discrete error estimates by combining our semidiscerete approximation with a backward Euler approximation in time, and Theorem \ref{theorem:fully-discrete-semigroup-approximation-2} and \ref{theorem:fully-discrete-semigroup-approximation-4} describes rates of convergence in this case. In particular, the inequalities of Lemma \ref{lemma:fully-discrete-inequalities-1}, \ref{lemma:fully-discrete-inequalities-2} and \ref{lemma:fully-discrete-inequalities-3} extend Theorem 9.1, 9.2 and 9.3 in \cite{thomee} to equations involving operators that are not necessarily strictly positive, and contains an additional factor ensuring exponential decay in time whenever $A$ is strictly positive. In Corollary \ref{cor:fully-discrete-numerical-example} we show that this covers elliptic operators on surfaces, and surface finite element approximations combined with backward Euler.  

The motivation for studying parameterized estimates of the form of Theorem \ref{theorem:1}, \ref{theorem:fully-discrete-semigroup-approximation-2} and \ref{theorem:fully-discrete-semigroup-approximation-4}---where the user can trade off regularity of the initial condition against either a singularity at $t = 0$ or rate of convergence---is their importance in deriving optimal rates of convergence of numerical approximations of stochastic evolution equations. All results mentioned in this introduction turn out to be essential components of the error analysis in \cite{2024-fem-auestad} and \cite{2024-sfem-auestad}. 
\section{Semidiscrete and fully discerete error estimates}

Throughout the paper we use the following notation: for a Banach space $X$, we denote by $L(X)$ the Banach space of bounded linear operators on $X$ with its usual norm, while for a linear operator $A$, we denote its spectrum by $\sigma(A)$. We also denote by $C$ a generic constant, which may change from line to line. Parameter dependence of $C$ is omitted if not relevant. We will frequently (e.g. in Lemma \ref{lemma:interpolation-inequality}, condition \ref{cond:negative-rho} and \ref{cond:abstract-solution-operator-3}) and without further explanation consider operators $B \in L(X)$, with the property that $B A \in L(X)$ for some densely defined and possibly unbounded linear $A : D(A) \to X$, with a bounded inverse. By this we understand that $B$ extends to a bounded linear operator $D(A^{-1}) \to X$, where we understand $D(A^{-1})$ as the completion of $X$ using the $\Vert A^{-1} \cdot \Vert_X$-norm.  

\subsection{Semidiscrete error estimate}

In this section we study \eqref{eq:linear-evolution-equation} and its semidiscrete numerical approximation,
\begin{align}\label{eq:discrete-linear-evolution}
     \dot{u}_h  = -A_h u_h, \quad u_h(0) = \pi_h x \in X_h \subseteq X,
\end{align}
under the conditions of Assumption \ref{assumption:abstract-model}.

\begin{assumption}\label{assumption:abstract-model} 
\hfill
\begin{description}
    \myitem{(A1)} $X$ is a Banach space with norm $\Vert \cdot \Vert_X$, and $X_h \subseteq X, \ 0 < h < 1$, are closed linear spaces. \medskip
    \myitem{(A2)}\label{cond:pih} $\pi_h : X \to X_h$ is bounded and linear, $\Vert \pi_h \Vert_{L(X)}$ is independent of $h$, and $\pi_h x = x$ for any $x \in X_h$. \medskip
    \myitem{(A3)}\label{cond:sectorial} $A : D(A) \subseteq X \to X$ is a closed and densely defined linear operator, and it is sectorial in the sense that: there is $\lambda \geq 0$, such that
    \begin{align*}
         \sigma(\lambda + A) \subseteq \Sigma_{\delta} := \{ z \in \mathbb{C} \setminus \{ 0 \}, \ \vert \mathrm{arg}(z) \vert < \delta \},
    \end{align*}
    for some $\delta \in (0, \pi / 2)$, and for some $M > 0$,
    \begin{align*} 
        \Vert (z - \lambda - A)^{-1} x \Vert_X \leq \frac{M}{\vert z \vert} \Vert x \Vert_X, \quad \text{for any } x \in X \text{ and any } z \notin \Sigma_{\delta}.
    \end{align*}
    \myitem{(A4)}\label{cond:discrete-sectorial} $A_h : X_h \to X_h$ are linear operators, and are sectorial (uniformly in $h$), in the sense that: for the same $\lambda, \delta, M$, as in \ref{cond:sectorial}, and for any $0 < h < 1$,
    \begin{align*}
         \sigma(\lambda + A_h) \subseteq \Sigma_{\delta} \quad \text{ and } \quad
        \Vert (z - \lambda - A_h)^{-1} x \Vert_X \leq \frac{M}{\vert z \vert} \Vert x \Vert_X, \quad \text{for any } x \in X_h \text{ and any } z \notin \Sigma_{\delta}.
    \end{align*}
    \myitem{(A5)}\label{cond:abstract-solution-operator} For some $r > 0$, the following estimate holds, 
    \begin{align*}
        \Vert ((\lambda + A)^{-1} - (\lambda + A_h)^{-1} \pi_h) x \Vert_X \leq C h^r \Vert x \Vert_X, \quad x \in X.
    \end{align*}
\end{description}
\end{assumption}

$-A$ and $-A_h$ defined as in \ref{cond:sectorial} and \ref{cond:discrete-sectorial} are the generators of analytic semigroups on $X$ and $X_h$, respectively. In what follows these will be denoted by $S(\cdot)$ and $S_h(\cdot)$. For any $t \geq 0$ we define,
\begin{align*}
    u(t) := S(t) x, \quad u_h(t) := S_h(t) \pi_h x,
\end{align*}
as the solutions to \eqref{eq:linear-evolution-equation} and \eqref{eq:discrete-linear-evolution} respectively. It is well known that with $u, u_h$ above, \eqref{eq:linear-evolution-equation} and \eqref{eq:discrete-linear-evolution} holds in the sense of the limit in the time derivative being taken in the norm topology on $X$. The stronger result holds, 
\begin{align*}
    \lim_{\Delta t \to 0} \frac{S(t+\Delta t) - S(t)}{\Delta t} = -A S(t),
\end{align*}
with the limit being taken in the uniform operator topology, since $S(\cdot) : \Sigma_{\pi / 2 - \delta} \to L(X)$, defined in \eqref{eq:residue-theorem-semigroup}, is analytic (see e.g. Section 2.5.1 in \cite{yagi}). 

The condition \ref{cond:abstract-solution-operator} is interpreted as the error in the numerical approximation of the corresponding stationary problem $(\lambda + A) u = x$, where the numerical approximation is given by $(\lambda + A_h) u_h = \pi_h x$. When we can associate to $A$ a sesquilinear form, $a : V \times V \to \mathbb{C}$, where $V,H$ are Hilbert spaces with $V \subseteq H$ densely and continuously (see Lemma \ref{lemma:variational-semigroup}), we think of $\lambda \geq 0$ as some constant ensuring that ``the shifted sesquilinear form", $a(\cdot, \cdot) + \lambda (\cdot, \cdot)_H$, is coercive on $V$. Elliptic operators with Gårding's inequality is the canonical example. More generally, one can consider elliptic operators on surfaces with a Ladyzhenskaya inequality (as described in Section 3 in \cite{2013-elliott}). In Theorem \ref{theorem:1} we will see that even though $\lambda$ is needed for the stationary problem above to have a solution, it only influences the error in approximating the corresponding Cauchy problem \eqref{eq:linear-evolution-equation} through a factor that possibly grows or decays exponentially with time.

The following theorem is the main result of this paper, and is an extension of Theorem 3.5 in \cite{thomee} to more general equations and numerical approximation techniques.
\begin{theorem}\label{theorem:1}
Under Assumption \ref{assumption:abstract-model}, there is $C, \epsilon > 0$ such that
\begin{align*}
    \Vert u(t) - u_h(t) \Vert_X \leq C e^{(\lambda - \epsilon) t} t^{-\theta / r + \rho / r} h^{\theta} \Vert (\lambda + A)^{\rho / r} x \Vert_X, \quad x \in D((\lambda + A)^{\rho / r}),
\end{align*}
for $\theta \in [0,r]$ and $\rho \in [0,\theta]$. 
\end{theorem}

Before we are able to prove Theorem \ref{theorem:1}, we need to recall some properties of analytic semigroups and their generators. The analytic semigroup generated by $-A$ in \ref{cond:sectorial} is defined by the following Dunford integral,  
\begin{align}\label{eq:residue-theorem-semigroup}
    S(t) := \frac{1}{2\pi i} \int_{-\lambda + \gamma} e^{-zt} (z - A)^{-1} \, dz,
\end{align}
where $\gamma := \{ s e^{\pm i \delta}, \ s \geq R \} \cup \{ R e^{i \phi}, \ \vert \phi \vert \in [\delta, \pi] \}$, for any $R \geq 0$, oriented counterclockwise, running through $\mathbb{C} \setminus \Sigma_{\delta}$ and surrounding $\sigma(A)$ (see e.g. Theorem 3.1 in \cite{yagi}). Owing to condition \ref{cond:sectorial} the integral \eqref{eq:residue-theorem-semigroup} converges in the uniform operator topology, and is independent of the choice of $R$. With $\lambda, A$ as in \ref{cond:sectorial}, we can define fractional powers of $\lambda + A$ by, 
\begin{align}\label{eq:fractional-powers}
    (\lambda + A)^{-\alpha} := \frac{1}{\Gamma(\alpha)} \int_0^{\infty} t^{-1 + \alpha} e^{-\lambda t} S(t) \, dt, \qquad (\lambda + A)^{\alpha} := \frac{\sin(\alpha \pi)}{\pi} \int_0^{\infty} t^{-1 + \alpha} (t + \lambda + A)^{-1} (\lambda + A) \, dt, 
\end{align}
for $\alpha \in (0,1)$, while the expression for $(\lambda + A)^{-\alpha}$ also holds for any $\alpha \geq 1$ (see Equation 6.9 and Theorem 6.9 in \cite{pazy}, respectively, using that the operator semigroup generated by $-\lambda - A$, is equal to $e^{-\lambda \, \cdot} S(\cdot)$). The following properties of generators of analytic semigroups are well known.
\begin{lemma}\label{lemma:analytic-semigroup}
    Let $\lambda, A$ be as in \ref{cond:sectorial}. Then,
    \begin{enumerate}
        \item[(a)] there is $C, \epsilon > 0$ such that for any $\alpha \geq 0$, $t > 0$ and $x \in X$
        \begin{align}\label{eq:analytic-semigroup-property-1-2}
            S(t) x \in D((\lambda + A)^{\alpha}) \quad \text{with} \quad \Vert (\lambda + A)^{\alpha} S(t) x \Vert_X \leq C e^{(\lambda-\epsilon)t} t^{-\alpha} \Vert x \Vert_X,
        \end{align}
        with convention $(\lambda + A)^0 = I$, \medskip

        \item[(b)] for $\alpha, \beta \in \mathbb{R}$, $x \in D((\lambda + A)^{\gamma})$ with $\gamma = \max(\alpha, \beta, \alpha + \beta)$,
        \begin{align*}
            (\lambda + A)^{\alpha} (\lambda + A)^{\beta} x = (\lambda + A)^{\beta} (\lambda + A)^{\alpha} x = (\lambda + A)^{\alpha + \beta} x,
        \end{align*}
        
        \item[(c)] $D((\lambda + A)^{\alpha})$ is dense in $X$, \medskip
        
        \item[(d)] $(\lambda + A)^{\alpha} S(t) x = S(t) (\lambda + A)^{\alpha} x$ for $x \in D((\lambda + A)^{\alpha})$, \medskip
        
        \item[(e)] and finally for $\alpha \in [0,1]$,
        \begin{align}
        \label{eq:decay-no-lambda}
            \Vert (S(t) - I) x \Vert_X &\leq C e^{\lambda t} t^{\alpha} \Vert (\lambda + A)^{\alpha} x \Vert_X.
        \end{align}
    \end{enumerate}
\end{lemma}
\begin{proof}
    See appendix \ref{app:prelim-proofs}. It is worth noting some parameter dependence of $C$ and $\epsilon$ in \eqref{eq:analytic-semigroup-property-1-2}. We have $0 < \epsilon < \sup\{s > 0 : \sigma(\lambda - s + A) \subseteq \Sigma_{\delta'} \}$ for some $\delta' \in [\delta, \pi / 2)$, while $C$ depends on $M, \delta'$ and $\alpha$. 
\end{proof}

The next lemma asserts that the semigroup generated by $-A_h$ on $X_h$ has a smoothing effect similar to \eqref{eq:analytic-semigroup-property-1-2}. 
\begin{lemma}\label{lemma:discrete-semigroup-smoothing}
    Let $A_h$ be as in condition \ref{cond:discrete-sectorial}. Then there is $C, \epsilon > 0$, such that for any $x \in X$, $\alpha \geq 0$, 
    \begin{align*}
        \Vert (\lambda + A_h)^{\alpha} S_h(t) \pi_h x \Vert_X \leq C e^{(\lambda - \epsilon) t} t^{-\alpha} \Vert x \Vert_X. 
    \end{align*}
\end{lemma}
\begin{proof}
    See appendix \ref{app:prelim-proofs}. The parameter dependence of $C$ and $\epsilon$ are as in \eqref{eq:analytic-semigroup-property-1-2} (but with $A$ replaced by $A_h$, see e.g. Lemma \ref{lemma:shifted-discrete}). 
\end{proof}

For the proof of Theorem \ref{theorem:1}, the interpolation inequalities of Lemma \ref{lemma:interpolation-inequality} will be key. The first interpolation inequality of that lemma is a special case of the moment inequality for sectorial operators when $\phi \geq 0$ (see Section 2.7.4 \cite{yagi}), and the extension to $\phi \in [-1, 0]$ is straight forward. The second interpolation inequality is similar to the Heinz--Kato inequality (see \cite{1951-heinz}, \cite{1961-kato} and Section 2.8.3 in \cite{yagi}), but accommodates operators on Banach spaces, as opposed to only Hilbert spaces. 
\begin{lemma}\label{lemma:interpolation-inequality}
    Let $\lambda, A$ be as in condition \ref{cond:sectorial}. Then, for any $\phi \in [-1,1]$, $x \in D((\lambda + A)^{\phi}) \cap X$ and $\alpha \in [0,1]$, the following interpolation inequality holds,
    \begin{align*}
        \Vert (\lambda + A)^{\phi \alpha} x \Vert_X \leq C \Vert (\lambda + A)^{\phi} x \Vert_X^{\alpha} \Vert x \Vert_X^{1-\alpha}.
    \end{align*}
    Further, for any $B \in L(X)$ with the property that $B (\lambda + A)^{\phi} \in L(X)$, the following interpolation inequality holds,
    \begin{align*}
        \Vert B (\lambda + A)^{\phi \alpha} \Vert_{L(X)} \leq C \Vert B (\lambda + A)^{\phi} \Vert_{L(X)}^{\alpha} \Vert B \Vert_{L(X)}^{1-\alpha}. 
    \end{align*}
\end{lemma}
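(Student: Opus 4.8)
The plan is to set $B := \lambda + A$ and reduce both inequalities to the classical moment inequality for an invertible sectorial operator, which I will extract directly from the integral representation \eqref{fractional-powers}. By condition \ref{cond:sectorial} the spectrum of $B$ lies in $\Sigma_{\delta}$ with $\delta < \pi/2$, so $0 \in \rho(B)$ and the whole negative real axis belongs to the resolvent set. In particular, for every $t > 0$ the point $-t$ lies outside $\Sigma_{\delta}$, so the resolvent bound in \ref{cond:sectorial} yields $\Vert (t+B)^{-1} \Vert_{L(X)} \le M/t$ and, via $B(t+B)^{-1} = I - t(t+B)^{-1}$, also $\Vert B(t+B)^{-1} \Vert_{L(X)} \le 1 + M$. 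These two bounds are the only analytic input the argument needs, and by a standard fact on fractional powers of sectorial operators (see \cite{pazy}) they hold equally for $C := B^{\phi}$ with $\phi \in (0,1]$, since $B^{\phi}$ is again invertible and sectorial of half-angle $\phi\delta < \pi/2$.

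First I would prove the baseline statement: for any invertible sectorial operator $C$ of half-angle below $\pi/2$ and any $\alpha \in (0,1)$, both $\Vert C^{\alpha} x \Vert_X \le c \Vert C x \Vert_X^{\alpha} \Vert x \Vert_X^{1-\alpha}$ and its operator analogue hold. Starting from $C^{\alpha} = \frac{\sin(\alpha\pi)}{\pi}\int_0^{\infty} t^{\alpha-1}(t+C)^{-1} C\,dt$ as in \eqref{fractional-powers}, I split the integral at a free parameter $\mu > 0$. On $[0,\mu]$ I keep $C$ against the resolvent and use $\Vert C(t+C)^{-1}\Vert_{L(X)} \le 1+M$, which produces a term controlled by $\mu^{\alpha}\Vert x\Vert_X$; on $[\mu,\infty)$ I commute $C$ past its resolvent and use $\Vert (t+C)^{-1}\Vert_{L(X)} \le M/t$, producing a term controlled by $\mu^{\alpha-1}\Vert Cx\Vert_X$. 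Both scalar integrals converge because $\alpha \in (0,1)$. Optimizing by taking $\mu = \Vert Cx\Vert_X/\Vert x\Vert_X$ turns the sum into the desired geometric mean. The operator version is identical, with $x$ replaced by an inserted $T$ and vector norms by operator norms. Taking $C = B$ gives the case $\phi = 1$, and $C = B^{\phi}$ gives every $\phi \in (0,1]$; the endpoints $\alpha \in \{0,1\}$ and the degenerate case $\phi\alpha = 1$ are trivial.

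Next I would handle $\phi \in [-1,0)$ by a substitution reducing to the positive case. Writing $\phi = -\beta$ with $\beta \in (0,1]$, the operator $B^{-\beta} = B^{\phi}$ is bounded, so in the vector case $y := B^{\phi} x$ is well defined and lies in $D(B^{\beta})$ with $B^{\beta} y = x$; by additivity of fractional powers $B^{\phi\alpha} x = B^{\beta(1-\alpha)} y$, and applying the positive-case inequality to $y$ with exponent $1-\alpha$ gives $\Vert B^{\phi\alpha} x\Vert_X \le C \Vert B^{\beta} y\Vert_X^{1-\alpha}\Vert y\Vert_X^{\alpha} = C\Vert x\Vert_X^{1-\alpha}\Vert B^{\phi} x\Vert_X^{\alpha}$, which is the claim. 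For the operator inequality I set $U := T B^{\phi} \in L(X)$, note $U B^{\beta} = T$, and apply the positive-case operator inequality to $U$ with exponent $1-\alpha$ in the same way.

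The step I expect to be the main obstacle is not the optimization, which is elementary, but the bookkeeping around fractional powers: verifying that $B^{\phi}$ is invertible and sectorial with the two resolvent bounds used in the baseline argument, and that the composition identities $B^{a}B^{b} = B^{a+b}$ used in the negative-exponent reduction hold on the relevant domains for exponents of mixed sign. These are standard properties (as in \cite{pazy}), but they must be invoked carefully so that the constant $C$ depends only on $M$, $\delta$, $\lambda$ and the exponents, and in particular not on $x$ or $h$; once they are in place every remaining estimate is a scalar integral computation.
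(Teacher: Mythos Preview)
Your argument is correct, and the negative-exponent reduction for the first inequality is essentially what the paper does as well. The genuine differences are two. First, for $\phi\in(0,1)$ the paper works directly with the Balakrishnan integral for $(\lambda+A)^{\phi\alpha}$ in terms of the resolvent of $\lambda+A$ and uses the intermediate estimate $\Vert (t+\lambda+A)^{-1}(\lambda+A)^{1-\phi}\Vert_{L(X)}\le Ct^{-\phi}$, itself obtained from the $\phi=1$ moment inequality; you instead package the $\phi=1$ case as a black-box moment inequality for an arbitrary invertible sectorial operator and then substitute $C=(\lambda+A)^{\phi}$, which is tidier but imports from outside the fact that $(\lambda+A)^{\phi}$ is again sectorial with the required resolvent bound and that $\bigl((\lambda+A)^{\phi}\bigr)^{\alpha}=(\lambda+A)^{\phi\alpha}$. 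Second, for the operator inequality with $\phi\in(-1,0)$ the paper switches to the semigroup representation of negative powers in \eqref{fractional-powers} and uses the smoothing bound $\Vert (\lambda+A)^{-\phi}e^{-\lambda t}S(t)\Vert_{L(X)}\le Ct^{\phi}$ from Lemma~\ref{lemma:analytic-semigroup}; your algebraic substitution $U=T(\lambda+A)^{\phi}$, $U(\lambda+A)^{-\phi}=T$ is more elementary and never touches the semigroup. Net effect: your route is more uniform (one baseline lemma, two reductions) at the cost of leaning on standard but nontrivial facts about fractional powers, while the paper's route is more self-contained, using only the two integral formulae \eqref{fractional-powers} and resolvents of $\lambda+A$ itself.
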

\begin{proof}
The proof of the first inequality for $\phi \in [0,1]$ is given in Section 2.7.4 in \cite{yagi}. The arguments are the same as those in the proof of the second inequality with $\phi \geq 0$. We include the full argument for both inequalities. 

For ease of notation, and without loss of generality, assume $\lambda = 0$. The case $\phi = 0$ is clear for any $\alpha$, and the cases $\alpha = 0$ and $\alpha = 1$ for any $\phi$. For the first inequality, the case $\phi = 1$ is given in Theorem 6.10 in \cite{pazy} (see also Proposition 2.2.15 in \cite{lunardi}). In the case of $\phi \in (0,1)$, we have $\phi \alpha \in (0,1)$, and 
\begin{align}\label{eq:setting-K-equal}
\begin{split}
    \Vert A^{\phi \alpha} x \Vert_X &= \vert \frac{\sin(\alpha \phi \pi)}{\pi} \vert \Vert \int_0^{\infty} t^{-1 + \phi \alpha} (t+A)^{-1} A x \, dt \Vert_X \\
    &\leq C \bigg(\int_0^{K} t^{-1 + \phi \alpha} \Vert (t+A)^{-1} A x \Vert_X \, dt + \int_K^{\infty} t^{-1 + \phi \alpha} \Vert (t+A)^{-1} A^{1-\phi} \Vert_{L(X)} \Vert A^{\phi} x \Vert_X \, dt \bigg) \\
    &\leq C K^{\phi \alpha} \Vert x \Vert_X + C K^{-\phi + \phi \alpha} \Vert A^{\phi} x \Vert_X. 
\end{split}
\end{align}
Here we used that since the inequality holds for $\phi = 1$, and $\Vert (t+A)^{-1} \Vert_{L(X)} \leq C t^{-1}$, $\Vert A (t+A)^{-1} \Vert_{L(X)} \leq C$ (by the bound on the resolvent in \ref{cond:sectorial} and the identity $A(t+A)^{-1} = I - t (t+A)^{-1}$) we must have
\begin{align*}
    \Vert (t + A)^{-1} A^{1-\phi} \Vert_{L(X)} = \Vert A^{1-\phi} (t+A)^{-1} \Vert_{L(X)} \leq C \Vert (t + A)^{-1} \Vert_{L(X)}^{\phi} \Vert A (t+A)^{-1} \Vert_{L(X)}^{1-\phi} \leq C t^{-\phi}.
\end{align*}
Setting $K = (\Vert A^{\phi} x \Vert_X / \Vert x \Vert_X)^{1 / \phi}$ in \eqref{eq:setting-K-equal}, we get the estimate.

For $\phi \in (-1,0)$, we have $\phi \alpha \in (-1, 0)$, $\alpha \phi - \phi \geq 0$, and by the previous inequality, 
\begin{align*}
    \Vert A^{\phi \alpha} x \Vert_X = \Vert A^{\phi (\alpha - 1)} A^{\phi} x \Vert_X \leq C \Vert A^{-\phi} A^{\phi} x \Vert_X^{1 - \alpha} \Vert A^{\phi} x \Vert_X^{\alpha} \leq C \Vert A^{\phi} x \Vert_X^{\alpha} \Vert x \Vert_X^{1-\alpha}. 
\end{align*}

The second inequality of the lemma also holds for $\alpha = 0$ and $\alpha = 1$ for any $\phi$, and any $\alpha$ when $\phi = 0$. For $\phi \in (0,1)$, arguing as in \eqref{eq:setting-K-equal}, and using the boundedness of $B$ to pass it under the integral sign, we find that, 
\begin{align*}
    \Vert B A^{\phi \alpha} \Vert_{L(X)} &= \vert \frac{\sin(\phi \alpha \pi)}{\pi} \vert \Vert \int_0^{\infty} t^{-1 + \phi \alpha} B (t + A)^{-1} A \, dt \Vert_{L(X)} \\
    &\leq C \big( \int_0^{K} t^{-1 + \phi \alpha} \Vert B \Vert_{L(X)} \Vert A (t+A)^{-1} \Vert_{L(X)} \, dt + \int_K^{\infty} t^{-1 + \phi \alpha} \Vert B A^{\phi} \Vert_{L(X)} \Vert A^{1-\phi} (t + A)^{-1} \Vert_{L(X)} \, dt \big) \\
    &\leq C K^{\phi \alpha} \Vert B \Vert_{L(X)} + C K^{-\phi + \phi \alpha} \Vert B A^{\phi} \Vert_{L(X)},
\end{align*}
Setting $K = (\Vert B A^{\phi} \Vert_{L(X)} / \Vert B \Vert_{L(X)})^{1/\phi}$ gives the inequality. 

For $\phi \in (-1,0)$, we have $\phi \alpha \in (-1, 0)$, and by using the definition of $A^{\phi}$ in \eqref{eq:fractional-powers}, the boundedness of $B$ to pass it under the integral sign, and the inequality $\Vert A^{-\phi} S(t) \Vert_{L(X)} \leq C t^{\phi}$ from Lemma \ref{lemma:analytic-semigroup}, we find
\begin{align*}
    \Vert B A^{\phi \alpha} \Vert_{L(X)} &= \frac{1}{\Gamma(\phi \alpha)} \Vert \int_0^{\infty} t^{-1 - \phi \alpha} B S(t) \, dt \Vert_{L(X)} \\
    &\leq C \bigg( \int_0^K t^{-1 - \phi \alpha} \Vert B \Vert_{L(X)} \Vert S(t) \Vert_{L(X)} \, dt + \int_K^{\infty} t^{-1 - \phi \alpha} \Vert B A^{\phi} \Vert_{L(X)} \Vert A^{-\phi} S(t) \Vert_{L(X)} \, dt \bigg) \\
    &\leq C K^{-\phi \alpha} \Vert B \Vert_{L(X)} + C K^{\phi - \phi \alpha} \Vert B A^{\phi} \Vert_{L(X)}.
\end{align*}
Setting $K = (\Vert B A^{\phi} \Vert_{L(X)} / \Vert B \Vert_{L(X)})^{-1/\phi}$, gives the inequality. 
\end{proof}

The last lemma that we list before proving Theorem \ref{theorem:1} uses the previous interpolation inequality to assert that if a collection of operators satisfy a couple of inequalities, we can interpolate these inequalities to arrive at the particular expression in Theorem \ref{theorem:1}. In the proof of Theorem \ref{theorem:1}, we will apply this lemma to the operator $e^{-\lambda t + \epsilon t}(S(t) - S_h(t) \pi_h)$, where $\epsilon > 0$ is the smallest $\epsilon$ from Lemma \ref{lemma:analytic-semigroup} and \ref{lemma:discrete-semigroup-smoothing}. 
\begin{lemma}\label{lemma:interpolation}
    Let $\lambda, A$ be as in condition \ref{cond:sectorial}, and let $B_{t,h} \in L(X)$ be a collection of bounded linear operators satisfying,
    \begin{align*}
    \Vert B_{t,h} \Vert_{L(X)} \leq C, \quad \Vert B_{t,h} (\lambda + A)^{-1} \Vert_{L(X)} \leq C h^r, \quad \Vert B_{t,h} \Vert_{L(X)} \leq C t^{-1} h^r, 
    \end{align*}
    for some $C > 0$. Then, the following estimate holds,
    \begin{align*}
        \Vert B_{t,h} x \Vert \leq C t^{-\theta / r + \rho / r} h^{\theta} \Vert (\lambda + A)^{\rho / r} x \Vert_X, \quad x \in D((\lambda + A)^{\rho / r}),
    \end{align*}
    for $\theta \in [0,r]$, and $\rho \in [0, \theta]$.
\end{lemma}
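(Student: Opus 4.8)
The goal is to interpolate three operator bounds into a single estimate parametrized by $\theta$ and $\rho$. I think of the exponent $\rho/r$ as measuring smoothness of the data, and $\theta/r$ as trading powers of $h$ against powers of $t^{-1}$. The plan is to first establish, for each fixed target regularity, a bound of the form $\Vert B_{t,h} (\lambda+A)^{-\beta}\Vert_{L(X)} \leq C\, t^{-\sigma} h^{\tau}$, and then apply the second interpolation inequality of Lemma~\ref{lemma:interpolation-inequality} (the operator version, with $B = B_{t,h}$) to fill in the intermediate exponents.

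Let me sketch the steps in order. First, I would interpolate the three hypotheses to get a family of $h$-power estimates. The middle hypothesis $\Vert B_{t,h}(\lambda+A)^{-1}\Vert_{L(X)} \leq C h^r$ together with the first, $\Vert B_{t,h}\Vert_{L(X)} \leq C$, lets me apply Lemma~\ref{lemma:interpolation-inequality} (second inequality, with $\phi = -1$ so that $(\lambda+A)^{\phi\alpha} = (\lambda+A)^{-\alpha}$) to obtain
\begin{align*}
    \Vert B_{t,h} (\lambda+A)^{-\alpha} \Vert_{L(X)} \leq C \Vert B_{t,h}(\lambda+A)^{-1}\Vert_{L(X)}^{\alpha}\Vert B_{t,h}\Vert_{L(X)}^{1-\alpha} \leq C h^{r\alpha},
\end{align*}
for $\alpha \in [0,1]$. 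Writing $\theta = r\alpha \in [0,r]$, this gives the clean bound $\Vert B_{t,h}(\lambda+A)^{-\theta/r}\Vert_{L(X)} \leq C h^{\theta}$. Second, I would bring in the third hypothesis $\Vert B_{t,h}\Vert_{L(X)} \leq C t^{-1} h^r$; interpolating this against $\Vert B_{t,h}\Vert_{L(X)}\leq C$ the same way produces $\Vert B_{t,h}\Vert_{L(X)} \leq C t^{-\alpha} h^{r\alpha} = C t^{-\theta/r} h^{\theta}$, a version with no inverse power of $A$ but a $t^{-\theta/r}$ factor.

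Third, and this is the crux, I would combine these two to recover the dependence on $\rho$. The target bound applies to $x \in D((\lambda+A)^{\rho/r})$, so I want to estimate $\Vert B_{t,h}(\lambda+A)^{-\rho/r}\Vert_{L(X)}$ and conclude by writing $B_{t,h}x = B_{t,h}(\lambda+A)^{-\rho/r}(\lambda+A)^{\rho/r}x$. The constraint $\rho \in [0,\theta]$ suggests interpolating the $h^{\theta}$-type bound (carrying $(\lambda+A)^{-\theta/r}$, no singular $t$ factor) against the $t^{-\theta/r}h^{\theta}$-type bound (no negative power of $A$), so that the fraction $\rho/\theta \in [0,1]$ of the smoothing is spent. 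Concretely, applying Lemma~\ref{lemma:interpolation-inequality} with $\phi = -\theta/r$ and the interpolation parameter $\alpha = 1 - \rho/\theta$ gives
\begin{align*}
    \Vert B_{t,h}(\lambda+A)^{-\rho/r}\Vert_{L(X)} &\leq C \Vert B_{t,h}(\lambda+A)^{-\theta/r}\Vert_{L(X)}^{1-\rho/\theta}\Vert B_{t,h}\Vert_{L(X)}^{\rho/\theta} \\
    &\leq C \big(h^{\theta}\big)^{1-\rho/\theta}\big(t^{-\theta/r}h^{\theta}\big)^{\rho/\theta} = C\, t^{-\rho/r}h^{\theta}.
\end{align*}
Rearranging the exponents, this is exactly $C\, t^{-\theta/r + \rho/r} h^{\theta}$ once I account for the factor correctly — I expect the bookkeeping of exponents to be the only delicate point, since the three hypotheses already encode all the analytic content and Lemma~\ref{lemma:interpolation-inequality} does the heavy lifting.

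**Main obstacle.** The main obstacle I anticipate is purely organizational: choosing the right value of $\phi$ and $\alpha$ at each interpolation step so that the powers of $h$ and $t^{-1}$ land on the stated exponents, and verifying that the constraints $\phi \in [-1,1]$, $\alpha \in [0,1]$ of Lemma~\ref{lemma:interpolation-inequality} are respected throughout (in particular that $\rho/\theta \in [0,1]$, which is guaranteed by $\rho \in [0,\theta]$, and that dividing by $\theta$ is legitimate, handling the degenerate case $\theta = 0$ separately where the estimate is trivial). There is no genuinely hard analytic step remaining once the interpolation inequality is available; the proof is essentially a two-parameter interpolation between the three given endpoint bounds.
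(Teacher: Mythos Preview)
Your plan is correct and close to the paper's argument; both reduce to the interpolation inequality of Lemma~\ref{lemma:interpolation-inequality}. There is, however, a bookkeeping slip in your third step---precisely the point you flagged as delicate. With $\phi = -\theta/r$, the interpolation parameter that puts $(\lambda+A)^{-\rho/r}$ on the left is $\alpha = \rho/\theta$, not $1-\rho/\theta$; this swaps the two exponents in your display. With the correct assignment one obtains
\begin{align*}
\Vert B_{t,h}(\lambda+A)^{-\rho/r}\Vert_{L(X)} &\leq C\,\Vert B_{t,h}(\lambda+A)^{-\theta/r}\Vert_{L(X)}^{\rho/\theta}\,\Vert B_{t,h}\Vert_{L(X)}^{1-\rho/\theta} \\
&\leq C\big(h^{\theta}\big)^{\rho/\theta}\big(t^{-\theta/r}h^{\theta}\big)^{1-\rho/\theta} = C\, t^{-\theta/r+\rho/r}h^{\theta},
\end{align*}
which is the target. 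As written, your display gives $t^{-\rho/r}h^{\theta}$, and your assertion that this equals $t^{-\theta/r+\rho/r}h^{\theta}$ ``once I account for the factor correctly'' is simply false unless $\theta = 2\rho$.

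For comparison, the paper organises the interpolation slightly differently: it applies Lemma~\ref{lemma:interpolation-inequality} twice to get $\Vert B_{t,h}(\lambda+A)^{-\alpha}\Vert_{L(X)} \leq C h^{r\alpha}$ (from the first and second hypotheses) and $\Vert B_{t,h}(\lambda+A)^{-\alpha}\Vert_{L(X)} \leq C t^{-1+\alpha} h^{r}$ (from the second and third), for all $\alpha\in[0,1]$, and then simply writes the trivial identity $\Vert B_{t,h}(\lambda+A)^{-\alpha}\Vert = \Vert B_{t,h}(\lambda+A)^{-\alpha}\Vert^{\beta}\Vert B_{t,h}(\lambda+A)^{-\alpha}\Vert^{1-\beta}$ with $\alpha = \rho/r$ and $\beta = (\rho-\theta)/(\rho-r)$. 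You instead pair the first hypothesis with each of the other two and invoke Lemma~\ref{lemma:interpolation-inequality} a second time in the final step; both routes are valid once the exponents are set correctly.
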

\begin{proof}
    We have,
    \begin{align*}
        \Vert B_{t,h} (\lambda + A)^{-\alpha} \Vert_{L(X)} \leq C h^{r \alpha}, \quad \Vert B_{t,h} (\lambda + A)^{-\alpha} \Vert_{L(X)} \leq C t^{-1 + \alpha} h^{r},
    \end{align*}
    for $\alpha \in \{0, 1\}$ and in view of Lemma \ref{lemma:interpolation-inequality}, we have that this holds for $\alpha \in [0,1]$. Thus, we get,
    \begin{align*}
        \Vert B_{t,h} (\lambda + A)^{-\alpha} \Vert_{L(X)} &= \Vert B_{t,h} (\lambda + A)^{-\alpha} \Vert_{L(X)}^{\beta} \Vert B_{t,h} (\lambda + A)^{-\alpha} \Vert_{L(X)}^{1- \beta} \\
        &\leq C ( t^{-1 + \alpha} h^r )^{\beta} ( h^{r \alpha} )^{1-\beta} \\
        &\leq C t^{-\theta / r + \rho / r} h^{\theta},
    \end{align*}
    where we have set $\alpha = \rho / r$ and $\beta = (\rho - \theta) / (\rho - r)$, for some $\theta \in [0,r]$, and $\rho \in [0, \theta]$.
\end{proof}

We are now ready to prove Theorem \ref{theorem:1}. 
\begin{proof}[Proof of Theorem \ref{theorem:1}]
It suffices to show the inequalities of Lemma \ref{lemma:interpolation} for the operator $e^{\epsilon t}(e^{-\lambda t} S(t) - e^{-\lambda t}S_h(t) \pi_h)$, with $\epsilon > 0$ the smallest $\epsilon$ from Lemma \ref{lemma:analytic-semigroup} and Lemma \ref{lemma:discrete-semigroup-smoothing}. Recall that the operator semigroup generated by $-\lambda - A$ is $e^{-\lambda \, \cdot} S(\cdot)$, and so without loss of generality, assume in the following that $\lambda = 0$. 

In order to show the inequalities of Lemma \ref{lemma:interpolation}, take $x \in X$, and decompose the error as follows,
\begin{align*}
    S(t) x -  S_h(t) \pi_h x &= (I -  \pi_h) S(t) x +  \pi_h S(t) x -  S_h(t) \pi_h x  \\
    &=: (I) + (II),
\end{align*}
where we have set,
\begin{align*}
    (I) = (I - \pi_h)S(t) x, \quad (II) =  \pi_h S(t) x -  S_h(t) \pi_h x.
\end{align*}
First, note that $\Vert (I) + (II) \Vert_{X} \leq C e^{-\epsilon t} \Vert x \Vert_X$, which follows by Lemma \ref{lemma:analytic-semigroup} and \ref{lemma:discrete-semigroup-smoothing}. To show the remaining inequalities we treat $(I)$ and $(II)$ separately. 

Define $\Delta_h := A^{-1} -  A_h^{-1} \pi_h$ (as in condition \ref{cond:abstract-solution-operator} with $\lambda = 0$). Next we verify that $\Vert (I) \Vert_X \leq C e^{-\epsilon t} h^r \Vert A x\Vert_X$ and $\Vert 
(I) \Vert_X \leq C e^{-\epsilon t} t^{-1} h^r \Vert x \Vert_X$. Note that,
\begin{align*}
    (I) &= (I - \pi_h) A^{-1} A S(t) x \\
    &= (A^{-1} -  \pi_h A^{-1}) S(t) A x \\
    &= (A^{-1} -  A_h^{-1} \pi_h +  A_h^{-1} \pi_h -  \pi_h A^{-1}) S(t) A x \\
    &= (\Delta_h -  \pi_h \Delta_h) S(t) A x.
\end{align*}
This in turn gives that $\Vert (I) \Vert_X \leq C \Vert \Delta_h \Vert_{L(X)} \Vert A S(t) \Vert_{L(X)} \Vert x \Vert_X$, and $\Vert (I) \Vert_X \leq C \Vert \Delta_h \Vert_{L(X)} \Vert S(t) \Vert_{L(X)} \Vert A x \Vert_X$, and by applying condition \ref{cond:abstract-solution-operator}, and Lemma \ref{lemma:analytic-semigroup}, we get the desired inequalities. 

It remains to show that $\Vert (II) \Vert_{X} \leq C e^{-\epsilon t} h^r \Vert A x \Vert_X$ and $\Vert (II) \Vert_{X} \leq C e^{-\epsilon t} t^{-1} h^r \Vert x \Vert_X$. In order to do so, we follow \cite{1998-trotter-kato}, and use an integral expression for $(II)$. To that end, define $e_h(t) := (II)$, and note that we have, 
\begin{align*}
    \dot{e}_h(t) &= - \pi_h A S(t) x +  A_h S_h(t) \pi_h x \\
    &= -  A_h ( \pi_h S(t) x -  S_h(t) \pi_h x) +  A_h \pi_h S(t) x -  \pi_h A S(t) x \\
    &= -  A_h e_h(t) + ( A_h \pi_h -  \pi_h A ) S(t) x \\
    &= -  A_h e_h(t) +  A_h \pi_h (A^{-1} -  A_h^{-1} \pi_h) A S(t) x \\
    &= -  A_h e_h(t) +  A_h \pi_h \Delta_h A S(t) x, 
\end{align*}
In the next to last equality, we used that $ \pi_h = A_h A_h^{-1} \pi_h$. Therefore (since $e_h(0) = 0$), we have, 
\begin{align*}
    e_h(t) = \int_0^t  S_h(t-s) A_h \pi_h \Delta_h A S(s) x \, ds.
\end{align*}
We split the integral above in two as in Lemma 3.1 in \cite{2013-tambue}, 
\begin{align*}
    e_h(t) = \int_0^{t/2}  S_h(t-s) A_h \pi_h \Delta_h A S(s) x \, ds + \int_{t/2}^t  S_h(t-s) A_h \pi_h \Delta_h A S(s) x \, ds =: (i) + (ii),
\end{align*}
and estimate each term separately. Integrating by parts, we get,
\begin{align*}
    (ii) =  \pi_h \Delta_h A S(t) x -  S_h(t/2) \pi_h \Delta_h A S(t/2) x + \int_{t/2}^t  S_h(t-s) \pi_h \Delta_h A^2 S(s) x \, ds, 
\end{align*}
where we used the identity,
\begin{align*}
    &\pi_h \Delta_h A S(t) x -  S_h(t/2) \pi_h \Delta_h A S(t/2) x \\
    &\qquad= \int_{t/2}^t \frac{d}{ds} \bigg(  S_h(t-s) \pi_h \Delta_h A S(s) x \bigg) \, ds \\
    &\qquad= \int_{t/2}^t  S_h(t-s) A_h \pi_h \Delta_h A S(s) x \, ds - \int_{t/2}^t  S_h(t-s) \pi_h \Delta_h A^2 S(s) x \, ds.
\end{align*}
We start by showing $\Vert (II) \Vert_X \leq C e^{-\epsilon t} h^r \Vert A x \Vert_X$. For $(i)$, we have by condition \ref{cond:abstract-solution-operator}, Lemma \ref{lemma:analytic-semigroup} and \ref{lemma:discrete-semigroup-smoothing},
\begin{align*}
    \Vert \int_0^{t/2} S_h(t-s) A_h \pi_h \Delta_h S(s) A x \, ds \Vert_X &\leq \int_0^{t/2} \Vert A_h S_h(t-s) \pi_h \Vert_{L(X)} \Vert \Delta_h \Vert_{L(X)} \Vert S(s) \Vert_{L(X)} \Vert A x \Vert_X \, ds \\
    &\leq C e^{-\epsilon t} \int_0^{t/2} (t-s)^{-1} h^r \Vert A x \Vert_X \, ds \\
    &\leq C e^{-\epsilon t} h^r \Vert A x \Vert_X,
\end{align*}
For the first and second term if $(ii)$, we get using the boundedness of the semigroups and of the projection $\pi_h$, in addition to condition \ref{cond:abstract-solution-operator}:
\begin{align*}
    \Vert \pi_h \Delta_h S(t) A x \Vert_X &\leq C \Vert \Delta_h \Vert_{L(X)} \Vert S(t) \Vert_{L(X)} \Vert A x \Vert_X \leq C e^{-\epsilon t} h^r \Vert A x \Vert_X, \\
    \Vert S_h(t/2) \pi_h \Delta_h S(t/2) A x \Vert_X &\leq \Vert S_h(t/2) \pi_h \Vert_{L(X)} \Vert \Delta_h \Vert_{L(X)} \Vert S(t/2) \Vert_{L(X)} \Vert A x \Vert_X \leq C e^{-\epsilon t} h^r \Vert A x \Vert_X. 
\end{align*}
For the last term in $(ii)$, we have by Lemma \ref{lemma:analytic-semigroup} and \ref{lemma:discrete-semigroup-smoothing}, condition \ref{cond:abstract-solution-operator}
\begin{align*}
    \Vert \int_{t/2}^t S_h(t-s) \pi_h \Delta_h A^2 S(s) x \, ds \Vert_X &\leq \int_{t/2}^t \Vert S_h(t-s) \pi_h \Vert_{L(X)} \Vert \Delta_h \Vert_{L(X)} \Vert A S(s) \Vert_{L(X)} \Vert Ax \Vert_X \, ds \\
    &\leq C e^{-\epsilon t} \int_{t/2}^t s^{-1} h^r \Vert A x \Vert_X \, ds \\
    &\leq C e^{-\epsilon t} h^r \Vert A x \Vert_X.
\end{align*}
We therefore get that $\Vert (II) \Vert \leq C e^{-\epsilon t} h^r \Vert A x \Vert_X$. 

It remains to show $\Vert (II) \Vert_X \leq C e^{-\epsilon t} t^{-1} h^r \Vert x \Vert_X$. Using integration by parts on $(i)$ as above, we get,
\begin{align*}
    (i) = -S_h(t/2) A_h \pi_h \Delta_h S(t/2) +  S_h(t) A_h \pi_h \Delta_h + \int_0^{t/2} A_h^2 S_h(t-s) \pi_h \Delta_h S(s) x \, ds.
\end{align*} 
We note that,
\begin{align*}
    \Vert A_h S_h(t/2) \pi_h \Delta_h S(t/2) x \Vert_{X} \leq \Vert A_h S_h(t/2) \pi_h \Vert_{L(X)} \Vert \Delta_h \Vert_{L(X)} \Vert S(t/2) x \Vert_X \leq C e^{-\epsilon t} t^{-1} h^r \Vert x \Vert_X,
\end{align*}
and,
\begin{align*}
    \Vert S_h(t) A_h \pi_h \Delta_h x \Vert_X \leq \Vert A_h S_h(t) \pi_h \Vert_{L(X)} \Vert \Delta_h \Vert_{L(X)} \Vert x \Vert_X \leq C e^{-\epsilon t} t^{-1} h^r \Vert x \Vert_X, 
\end{align*}
while for the third term of $(i)$, we have,
\begin{align*}
    \Vert \int_0^{t/2} S_h(t-s) A_h^2 \pi_h \Delta_h S(s) x \, ds \Vert_{X} &\leq \int_0^{t/2} \Vert A_h^2 S_h(t-s) \pi_h \Vert_{L(X)} \Vert \Delta_h \Vert_{L(X)} \Vert S(s) \Vert_{L(X)} \Vert x \Vert_X \, ds \\
    &\leq C e^{-\epsilon t} \int_0^{t/2} (t-s)^{-2} h^r \Vert x \Vert_X \, ds \\
    &\leq C e^{-\epsilon t} t^{-1} h^r \Vert x \Vert_X. 
\end{align*}
For the first two terms of $(ii)$ (after integration by parts) we have using condition \ref{cond:abstract-solution-operator}, Lemma \ref{lemma:analytic-semigroup} and \ref{lemma:discrete-semigroup-smoothing}
\begin{align*}
    \Vert \pi_h \Delta_h A S(t) x \Vert_X \leq C \Vert \Delta_h \Vert_{L(X)} \Vert A S(t) \Vert_{L(X)} \Vert x \Vert_X \leq C e^{-\epsilon t} t^{-1} h^r \Vert x \Vert_X,
\end{align*}
and
\begin{align*}
    \Vert S_h(t/2) \pi_h \Delta_h A S(t / 2) x \Vert_X \leq \Vert S_h(t/2) \pi_h \Vert_{L(X)} \Vert \Delta_h \Vert_{L(X)} \Vert A S(t/2) \Vert_{L(X)} \Vert x \Vert_X \leq C e^{-\epsilon t} t^{-1} h^r \Vert x \Vert_X.
\end{align*}
By the same arguments, we have for the last term of $(ii)$, 
\begin{align*}
    \Vert \int_{t/2}^t S_h(t-s) \pi_h \Delta_h A^2 S(s) x \, ds \Vert_X &\leq \int_{t/2}^t \Vert S_h(t-s) \pi_h \Vert_{L(X)} \Vert \Delta_h \Vert_{L(X)} \Vert A^2 S(s) \Vert_{L(X)} \Vert x \Vert_X \, ds \\
    &\leq C e^{-\epsilon t} \int_0^{t/2} s^{-2} h^r \Vert x \Vert_X \, ds \\
    &\leq C e^{-\epsilon t} t^{-1} h^r \Vert x \Vert_X.
\end{align*}
Therefore, we get that $\Vert (II) \Vert_X \leq C e^{-\epsilon t} t^{-1} h^r \Vert x \Vert_X$.

Since all inequalities of Lemma \ref{lemma:interpolation} have been shown for $e^{\epsilon t}(S(t) - S_h(t) \pi_h)$, the proof is finished. 
\end{proof}

\subsection{Fully discrete error estimate}

By combining $A_h$ in Theorem \ref{theorem:1} with a rational approximation of the exponential function, we get a fully discrete approximation of $S(t)$. To that end, we let $r(z) := (1 + z)^{-1}$, partition the time interval $[0,T]$ uniformly with increments $\Delta t = T / N$ for some positive integer $N$, and define, 
\begin{align}\label{eq:fully-discrete-semigroup-operator}
    S_{h,\Delta t}(t) :=
    \begin{cases}
        I, \quad &t = 0, \\
        \sum_{n = 0}^{N-1} r(\Delta t A_{h} )^{n+1} \chi_{(t_n,t_{n+1}]}(t), \quad &t > 0.
    \end{cases}
\end{align}
Here $t_n = n \Delta t$, $I$ is the identity, and $\chi_D(\cdot)$ is the characteristic function on the set $D$. 

In this section we combine Theorem \ref{theorem:1} with Lemma \ref{lemma:fully-discrete-inequalities-1}, \ref{lemma:fully-discrete-inequalities-2} and \ref{lemma:fully-discrete-inequalities-3} in order to arrive at an error estimate for our fully discrete approximation, which is stated in Theorem \ref{theorem:fully-discrete-semigroup-approximation-2}. The proofs of Lemma \ref{lemma:fully-discrete-inequalities-1}, \ref{lemma:fully-discrete-inequalities-2} and \ref{lemma:fully-discrete-inequalities-3} share some similarity to those of Theorem 9.1, 9.2 and 9.3 in \cite{thomee}, with the difference being that those theorems assume that we can always choose $\lambda = 0$---we need to accommodate the cases where $\lambda$ has to be chosen greater than $0$. This is e.g. the case for the example we consider in Section 3. To accommodate these cases, we are going to pay with a factor that grows exponentially with time. When we may choose $\lambda = 0$ we recover the results from Chapter 9 in \cite{thomee}, with an additional factor decaying exponentially with time. 

Lemma \ref{lemma:fully-discrete-inequalities-4} and \ref{lemma:fully-discrete-inequalities-5} are similar to Lemma \ref{lemma:fully-discrete-inequalities-1}, \ref{lemma:fully-discrete-inequalities-2} and \ref{lemma:fully-discrete-inequalities-3}, and are needed to get a fully discrete error estimate for less regular data, stated in Theorem \ref{theorem:fully-discrete-semigroup-approximation-4}. The proofs of the five following lemmas are all based on a Dunford integral representation of the difference,
\begin{align*}
    S_h(t_n) - S_{h,\Delta t}(t_n) = \frac{1}{2\pi i} \int_{-\lambda + \gamma} (e^{-z t_n} - r(\Delta t z)^n) (z - A_h)^{-1} \, dz,
\end{align*}
where $\gamma$ is as in \eqref{eq:residue-theorem-semigroup}. The details are given in Appendix \ref{app:rational-approximation-proofs}. 
\begin{lemma}\label{lemma:fully-discrete-inequalities-1}
Suppose condition \ref{cond:discrete-sectorial} holds, and let $S_{h,\Delta t}$ be as in \eqref{eq:fully-discrete-semigroup-operator}. Then there is $C, c, \epsilon > 0$ such that
\begin{align*}
    \Vert (S_h(t_n) - S_{h,\Delta t}(t_n)) \pi_h x \Vert_X \leq C e^{c (\lambda-\epsilon) t_n} \Vert x \Vert_X.
\end{align*}
\end{lemma}
\begin{lemma}\label{lemma:fully-discrete-inequalities-2}
Under the same conditions as in Lemma \ref{lemma:fully-discrete-inequalities-1},
\begin{align*}
    \Vert (S_h(t_n) - S_{h,\Delta t}(t_n)) \pi_h x \Vert_X \leq C e^{c (\lambda-\epsilon) t_n} \Delta t \Vert (\lambda+A_h) \pi_h x \Vert_X.
\end{align*}
\end{lemma}
\begin{lemma}\label{lemma:fully-discrete-inequalities-3}
Under the same conditions as in Lemma \ref{lemma:fully-discrete-inequalities-1},
\begin{align*}
    \Vert (S_h(t_n) - S_{h,\Delta t}(t_n)) \pi_h x \Vert_X \leq C e^{c (\lambda-\epsilon) t_n} t_n^{-1} \Delta t \Vert x \Vert_X.
\end{align*}
\end{lemma}
\begin{lemma}\label{lemma:fully-discrete-inequalities-4}
Under the same conditions as in Lemma \ref{lemma:fully-discrete-inequalities-1},
\begin{align*}
    \Vert (S_h(t_n) - S_{h,\Delta t}(t_n)) \pi_h x \Vert_X \leq C e^{c(\lambda-\epsilon) t_n} t_n^{-1/2} \Vert (\lambda + A_h)^{-1/2} \pi_h x \Vert_X. 
\end{align*}
\end{lemma}
\begin{lemma}\label{lemma:fully-discrete-inequalities-5}
Under the same conditions as in Lemma \ref{lemma:fully-discrete-inequalities-1},
\begin{align*}
    \Vert (S_h(t_n) - S_{h,\Delta t}(t_n)) \pi_h x \Vert_X \leq C e^{c(\lambda-\epsilon) t_n} t_n^{-1} \Delta t^{1/2} \Vert (\lambda + A_h)^{-1/2} \pi_h x \Vert_X.
\end{align*}
\end{lemma}

The following condition and lemma will be useful for ``replacing $A_h$ by $A$" in the following error estimates.
\begin{description}
    \myitem{(A6)}\label{cond:discrete-growth} For some $C > 0$, the following estimate holds, 
    \begin{align}\label{eq:discrete-growth}
        \Vert A_h x \Vert_X \leq C h^{-r} \Vert x \Vert_X, \quad \text{for any } x \in X_h.
    \end{align}
\end{description}
\begin{lemma}\label{lemma:discrete-operator-bound}
    Under the conditions of Assumuption \ref{assumption:abstract-model} and condition \ref{cond:discrete-growth}, there is some $C > 0$, such that,
    \begin{align*}
        \Vert (\lambda + A_h) \pi_h x \Vert_X \leq C \Vert (\lambda + A) x \Vert_X, \quad \text{for any } x \in X.
    \end{align*}
\end{lemma}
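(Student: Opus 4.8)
The plan is to pass through the resolvent. Writing $y := (\lambda + A)x$ for $x \in D(A)$ (which is where the right-hand side is finite), the asserted inequality is equivalent to the uniform-in-$h$ operator bound $\Vert (\lambda + A_h)\pi_h (\lambda + A)^{-1} \Vert_{L(X)} \leq C$, so it suffices to control this composed operator on all of $X$. The engine of the argument will be the stationary error estimate \ref{cond:abstract-solution-operator} together with the inverse-type bound \ref{cond:discrete-growth}.

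First I would record that \ref{cond:discrete-growth} makes $A_h$ a \emph{bounded} operator on $X_h$ with $\Vert A_h \Vert_{L(X_h)} \leq C h^{-r}$, so that $(\lambda + A_h)\pi_h x$ is well defined for every $x \in X$. Abbreviating $\Delta_h := (\lambda + A)^{-1} - (\lambda + A_h)^{-1}\pi_h$, I would use that $(\lambda + A_h)^{-1}\pi_h y \in X_h$ and that $\pi_h$ restricts to the identity on $X_h$ (condition \ref{cond:pih}) to rewrite
\begin{align*}
\pi_h (\lambda + A)^{-1} y = (\lambda + A_h)^{-1}\pi_h y + \pi_h \Delta_h y.
\end{align*}
Applying $(\lambda + A_h)$ and cancelling $(\lambda + A_h)(\lambda + A_h)^{-1}\pi_h y = \pi_h y$ then produces the clean decomposition
\begin{align*}
(\lambda + A_h)\pi_h (\lambda + A)^{-1} y = \pi_h y + (\lambda + A_h)\pi_h \Delta_h y.
\end{align*}

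The first term is controlled by the uniform boundedness of $\pi_h$. For the second I would split $(\lambda + A_h) = \lambda I + A_h$: the piece $\lambda \pi_h \Delta_h y$ is bounded by $C h^r \Vert y \Vert_X$ directly from \ref{cond:abstract-solution-operator}, while for $A_h \pi_h \Delta_h y$ I would apply \ref{cond:discrete-growth} to the element $\pi_h \Delta_h y \in X_h$ to get
\begin{align*}
\Vert A_h \pi_h \Delta_h y \Vert_X \leq C h^{-r}\Vert \pi_h \Delta_h y \Vert_X \leq C h^{-r}\, C h^r \Vert y \Vert_X = C \Vert y \Vert_X.
\end{align*}

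The essential point — and the only step requiring any care — is precisely this last estimate: the factor $h^{-r}$ coming from the inverse inequality \ref{cond:discrete-growth} is exactly compensated by the factor $h^r$ supplied by the elliptic error estimate \ref{cond:abstract-solution-operator}, leaving a bound uniform in $h$. Everything else is bounded-operator bookkeeping. Combining the two contributions gives $\Vert (\lambda + A_h)\pi_h (\lambda + A)^{-1} y \Vert_X \leq C \Vert y \Vert_X$, and substituting back $y = (\lambda + A)x$ yields the claim.
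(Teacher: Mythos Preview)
Your proof is correct and follows essentially the same route as the paper: both reduce the claim to a uniform bound on $(\lambda + A_h)\pi_h(\lambda+A)^{-1}$, insert and subtract $(\lambda+A_h)^{-1}\pi_h$, and then let the $h^{-r}$ from \ref{cond:discrete-growth} cancel against the $h^r$ from \ref{cond:abstract-solution-operator}. The only cosmetic difference is that the paper applies $\Vert(\lambda+A_h)\cdot\Vert_X \leq Ch^{-r}\Vert\cdot\Vert_X$ in one stroke rather than splitting off the $\lambda I$ piece.
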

\begin{proof} 
    Due to \ref{cond:discrete-growth}, we have $\Vert (\lambda + A_h) x \Vert_{X} \leq C h^{-r} \Vert x \Vert_X$ as well. It suffices to show that $\Vert (\lambda + A_h) \pi_h (\lambda + A)^{-1} x \Vert_X \leq C \Vert x \Vert_X$, for any $x \in X$. To that end,  
    \begin{align*}
        \Vert (\lambda + A_h) \pi_h (\lambda + A)^{-1} x \Vert_X &= \Vert (\lambda + A_h) \pi_h ((\lambda+A)^{-1} - (\lambda+A_h)^{-1} \pi_h + (\lambda+A_h)^{-1} \pi_h) x \Vert \\
        &\leq \Vert \pi_h x \Vert_X + \Vert (\lambda + A_h) \pi_h \Vert_{L(X_h)} \Vert ((\lambda+A)^{-1} - (\lambda+A_h)^{-1} \pi_h) x \Vert_X \\
        &\leq C \Vert x \Vert_X,
    \end{align*}
    where we used condition \ref{cond:abstract-solution-operator}.  
\end{proof}

We will need the following lemma to show our first fully discrete error estimate. 
\begin{lemma}\label{lemma:fully-discrete-semigroup-approximation-1}
    Suppose the conditions of Assumption \ref{assumption:abstract-model} and \ref{cond:discrete-growth} holds. Then, there is $C, c, \epsilon > 0$ such that
    \begin{align*}
        \Vert (S_h(t_n) - S_{h,\Delta t}(t_n) ) \pi_h x \Vert_X \leq C e^{c (\lambda-\epsilon) t_n} t_n^{-\theta/r + \rho/r} \Delta t^{\theta/r} \Vert (\lambda + A)^{\rho/r} x \Vert_X, 
    \end{align*}
    for $\theta \in [0,r]$ and $\rho \in [0, \theta]$.
\end{lemma}
\begin{proof}
Set $B_{t_n, \Delta t} := e^{-c(\lambda-\epsilon) t_n}(S_h(t_n) - S_{h,\Delta t}(t_n)) \pi_h$, where $c$ is either the biggest or smallest $c$ from Lemma \ref{lemma:fully-discrete-inequalities-1}, \ref{lemma:fully-discrete-inequalities-2} and \ref{lemma:fully-discrete-inequalities-3}, depending on the sign of $\lambda - \epsilon$. By combining the inequalities of Lemma \ref{lemma:fully-discrete-inequalities-1}, \ref{lemma:fully-discrete-inequalities-2} and \ref{lemma:fully-discrete-inequalities-3}, with Lemma \ref{lemma:discrete-operator-bound}, we have,
\begin{align*}
    &\Vert B_{t_n,\Delta t} \Vert_{L(X)} \leq C, \quad \Vert B_{t_n,\Delta t} (\lambda + A)^{-1} \Vert_{L(X)} \leq C \Delta t, \quad \Vert B_{t_n,\Delta t} \Vert_{L(X)} \leq C t_n^{-1} \Delta t,
\end{align*}
Therefore, we have,
\begin{align*}
    \Vert B_{t_n,\Delta t} (\lambda+A)^{-\alpha} \Vert_{L(X)} \leq C \Delta t^{\alpha}, \quad \Vert B_{t_n,\Delta t} (\lambda+A)^{-\alpha} \Vert_{L(X)} \leq C t_n^{-1 + \alpha} \Delta t,
\end{align*}
for $\alpha \in \{ 0,1\}$ and therefore in $[0,1]$ by Lemma \ref{lemma:interpolation-inequality}. Finally, this gives,
\begin{align*}
    \Vert B_{t_n,\Delta t} (\lambda+A)^{-\alpha} \Vert_{L(X)} &= \Vert B_{t_n,\Delta t} (\lambda+A)^{-\alpha} \Vert_{L(X)}^{\beta} \Vert B_{t_n,\Delta t} (\lambda+A)^{-\alpha} \Vert_{L(X)}^{1-\beta} \\
    &\leq C ( t_n^{\alpha - 1} \Delta t )^{\beta} ( \Delta t^{\alpha} )^{1-\beta} \\
    &= C \Delta t^{\theta / r} t_n^{-\theta / r + \rho / r},
\end{align*}
where we have set $\alpha = \rho / r$, $\theta = \rho + (r-\rho)\beta \in [\rho,r]$. Thus $\theta \in [0,r]$ and $\rho \in [0, \theta]$. This finishes the proof.
\end{proof}

We are now ready to derive our first fully discrete error estimate.
\begin{theorem}\label{theorem:fully-discrete-semigroup-approximation-2}
    Suppose the conditions of Assumption \ref{assumption:abstract-model} and \ref{cond:discrete-growth} holds. Then, there is $C, c, \epsilon > 0$ such that
    \begin{align*}
        \Vert (S(t) - S_{h,\Delta t}(t) \pi_h) x \Vert_X \leq C e^{c (\lambda-\epsilon) t} t^{-\theta/r + \rho/r} (h^{\theta} + \Delta t^{\theta / r}) \Vert (\lambda+A)^{\rho/2} x \Vert_X, 
    \end{align*}
    for $\theta \in [0,r]$ and $\rho \in [0, \theta]$. 
\end{theorem}
\begin{proof}
For $t \in (t_n, t_{n+1}]$, we decompose the error as follows, 
\begin{align*}
    &S(t) - S(t_{n+1}) \\
    &\qquad +S(t_{n+1}) -  S_h(t_{n+1}) \pi_h \\
    &\qquad + S_h(t_{n+1}) \pi_h -  S_{h,\Delta t} (t_{n+1}) \pi_h \\
    &\qquad + S_{h,\Delta t} (t_{n+1}) \pi_h -  S_{h,\Delta t} (t) \pi_h.
\end{align*}
For the first term, we have for some $\theta \in [0,r], \rho \in [0,\theta]$,
\begin{align*}
    \Vert (S(t) - S(t_{n+1})) x \Vert_X &= \Vert (I - S(t_{n+1} - t) ) S(t) x \Vert_X \\
    &= \Vert (I - S(t_{n+1} - t)) (\lambda + A)^{-\theta / r} (\lambda + A)^{\theta / r - \rho / r}  S(t) (\lambda + A)^{\rho / r} x \Vert_X \\
    &\leq \Vert (I - S(t_{n+1} - t)) (\lambda + A)^{-\theta / r} \Vert_{L(X)} \Vert (\lambda + A)^{\theta / r - \rho / r}  S(t) \Vert_{L(X)} \Vert (\lambda + A)^{\rho / r} x \Vert_X.
\end{align*}
Using Lemma \ref{lemma:analytic-semigroup} we have,
\begin{align*}
    \Vert (I - S(t_{n+1} - t)) (\lambda + A)^{-\theta / r} \Vert_{L(X)} \leq C \Delta t^{\theta / r} \quad \text{and} \quad \Vert (\lambda + A)^{\theta / r - \rho / r} S(t) \Vert_{L(X)} \leq C e^{(\lambda-\epsilon) t} t^{-\theta / r + \rho / r},
\end{align*}
for $\theta \in [0,r]$ and $\rho \in [0, \theta]$. This finishes the first term. 

For the two next terms, we use Theorem \ref{theorem:1} and Lemma \ref{lemma:fully-discrete-semigroup-approximation-1}. For the last term, we note that,
\begin{align*}
    S_{h,\Delta t} (t_{n+1}) \pi_h - S_{h,\Delta t} (t) \pi_h = 0, \quad t \in (t_n,t_{n+1}].
\end{align*}
To replace $t_{n+1}^{-\theta / r + \rho / r}$ with $t^{-\theta / r + \rho / r}$ we use that since $-\theta / r + \rho / r \leq 0$, we have that $t_{n+1}^{-\theta / r + \rho / r} \leq t^{-\theta / r + \rho / r}$. 
\end{proof}

\subsection{Semidiscrete and fully discrete error estimates in different norms, and for less regular data}

The rest of this section is devoted to showing semidiscrete error estimates in the $\Vert (\lambda + A)^{\alpha} \cdot \Vert_X$-norm, for $\alpha \in [-1/2,1/2]$, in addition to semidiscrete and fully discrete error estimates of the form of Theorem \ref{theorem:1} and Theorem \ref{theorem:fully-discrete-semigroup-approximation-2} where $\rho$ can be chosen negative. In order to do so, we require stronger assumptions than those we managed with in the previous two subsections. The first additional condition that we need is similar to \ref{cond:abstract-solution-operator}, with the error computed in a different norm. 
\begin{description}
    \myitem{(A7)}\label{cond:abstract-solution-operator-2} For $\lambda, A$ as in \ref{cond:sectorial}, the following estimate also holds,
    \begin{align*}
        \Vert (\lambda + A)^{1/2} ((\lambda + A)^{-1} - (\lambda + A_h)^{-1} \pi_h) x \Vert_X \leq C h^{r/2} \Vert x \Vert_X, \quad x \in X.
    \end{align*}
\end{description}
With this additional condition, we are able to show another semidiscrete error estimate, which is stated in the following lemma. 
\begin{lemma}\label{lemma:1}
Suppose the conditions of Assumption \ref{assumption:abstract-model}, \ref{cond:discrete-growth} and \ref{cond:abstract-solution-operator-2} holds. Then there is $C, \epsilon > 0$ such that 
\begin{align*}
    \Vert (\lambda + A)^{\alpha} (u(t) - u_h(t)) \Vert_X \leq C e^{(\lambda - \epsilon) t} t^{-\theta / r} h^{\theta - \alpha r} \Vert x \Vert_X, 
\end{align*}
for $\alpha \in [0,1/2]$ and $\theta \in [r\alpha,r]$.
\end{lemma}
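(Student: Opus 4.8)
The plan is to deduce the estimate from its two endpoint cases $\alpha=0$ and $\alpha=1/2$ by interpolation. As in the earlier proofs I first reduce to $\lambda=0$: writing $S(t)-S_h(t)\pi_h=e^{\lambda t}(e^{-\lambda t}S(t)-e^{-\lambda t}S_h(t)\pi_h)$ and noting that $e^{-\lambda\cdot}S(\cdot)$, $e^{-\lambda\cdot}S_h(\cdot)$ are the semigroups generated by $-\lambda-A$, $-\lambda-A_h$, the factor $e^{\lambda t}\le e^{\lambda T}$ is absorbed into $C=C_{\lambda,T}$; so I assume $\lambda=0$ and abbreviate $E(t):=S(t)-S_h(t)\pi_h$. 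The target parameter region $\{\alpha\in[0,1/2],\ \theta\in[r\alpha,r]\}$ is a quadrilateral with corners $(0,0),(0,r),(\tfrac12,\tfrac r2),(\tfrac12,r)$; the two corners with $\alpha=0$ are exactly Theorem~\ref{theorem:1} at $\rho=0$, so it remains to treat the two corners with $\alpha=1/2$ and fill in the rest by interpolation.

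For the interpolation I would use Lemma~\ref{lemma:interpolation-inequality}: its first inequality with $\phi=1/2$ and exponent $2\alpha\in[0,1]$ gives $\|A^{\alpha}E(t)x\|_X\le C\|A^{1/2}E(t)x\|_X^{2\alpha}\|E(t)x\|_X^{1-2\alpha}$, which is legitimate because $E(t)x\in D(A^{1/2})$ — the continuous part $S(t)x$ is smooth and $X_h\subseteq D(A^{1/2})$, an inclusion already implicit in \ref{cond:abstract-solution-operator-2}. Feeding in the $\alpha=1/2$ corner bound $\|A^{1/2}E(t)x\|_X\le Ct^{-\theta_1/r}h^{\theta_1-r/2}\|x\|_X$ with $\theta_1\in[r/2,r]$, together with Theorem~\ref{theorem:1} at $\rho=0$, namely $\|E(t)x\|_X\le Ct^{-\theta_0/r}h^{\theta_0}\|x\|_X$ with $\theta_0\in[0,r]$, the product has $t$-exponent $-\theta/r$ and $h$-exponent $\theta-\alpha r$, where $\theta:=2\alpha\theta_1+(1-2\alpha)\theta_0$; as $(\theta_0,\theta_1)$ range over their intervals, $\theta$ sweeps exactly $[r\alpha,r]$, matching the claim. (The two $\alpha=1/2$ corners are themselves joined by the same log-convexity trick, interpolating $t^{-1/2}$ and $t^{-1}h^{r/2}$.) Thus everything reduces to the two operator bounds $\|A^{1/2}E(t)\|_{L(X)}\le Ct^{-1/2}$ and $\|A^{1/2}E(t)\|_{L(X)}\le Ct^{-1}h^{r/2}$.

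To establish these I would rerun the proof of Theorem~\ref{theorem:1}, using the decomposition $E(t)=(I-\pi_h)\Delta_h AS(t)x+e_h(t)$ with $e_h(t)=\int_0^t S_h(t-s)A_h\pi_h\Delta_h AS(s)x\,ds$, but now carrying an extra $A^{1/2}$ and replacing the seed estimate $\|\Delta_h\|_{L(X)}\le Ch^r$ of \ref{cond:abstract-solution-operator} by condition \ref{cond:abstract-solution-operator-2}, i.e.\ $\|A^{1/2}\Delta_h\|_{L(X)}\le Ch^{r/2}$, together with its interpolant $\|A^{\nu}\Delta_h\|_{L(X)}\le Ch^{r(1-\nu)}$ for $\nu\in[0,1/2]$ (again Lemma~\ref{lemma:interpolation-inequality}). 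The continuous and discrete smoothing estimates (Lemmas~\ref{lemma:analytic-semigroup} and \ref{lemma:discrete-semigroup-smoothing}) control the $S$- and $S_h$-factors, and the auxiliary bound $\|A^{1/2}A_h^{-1}\|_{L(X_h)}\le C$ — which follows from \ref{cond:abstract-solution-operator-2} via the identity $A^{1/2}A_h^{-1}\pi_h=A^{-1/2}-A^{1/2}\Delta_h$ — lets one pass $A^{1/2}$ onto the discrete terms where this is unavoidable.

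The main obstacle is that $A^{1/2}$ is \emph{not} uniformly controlled on the discrete space: the naive split $A^{1/2}E(t)=A^{1/2}S(t)-A^{1/2}S_h(t)\pi_h$ is useless, since $\|A^{1/2}S_h(t)\pi_h\|_{L(X)}$ is only $O(h^{-r/2}t^{-1/2})$ (equivalently $\|A^{1/2}A_h^{-1/2}\|\sim h^{-r/2}$), so the clean $t^{-1/2}$ can only come from cancellation inside $E(t)$. Concretely, one must arrange the bookkeeping so that every surviving $A^{1/2}$ is paired with a $\Delta_h$, where \ref{cond:abstract-solution-operator-2} applies; after splitting $e_h$ at $t/2$ and integrating by parts, the dangerous term $A^{1/2}\pi_h\Delta_h AS(t)x$ arising from $(I-\pi_h)\Delta_h AS(t)x$ cancels the corresponding boundary term produced by the integration by parts of $e_h$, removing the uncontrolled $A^{1/2}\pi_h\Delta_h$ contributions and leaving only terms that close under \ref{cond:abstract-solution-operator-2} and the smoothing bounds. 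Getting these cross-cancellations and the distribution of the single available half-power right — so that \emph{both} corner bounds close — is the delicate part; once they are in hand, the interpolation of the first two paragraphs finishes the proof.
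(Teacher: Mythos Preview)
Your high-level plan matches the paper: reduce to $\lambda=0$, take the $\alpha=0$ case from Theorem~\ref{theorem:1}, establish the two $\alpha=1/2$ endpoints $\|A^{1/2}E(t)\|_{L(X)}\le Ct^{-1/2}$ and $\|A^{1/2}E(t)\|_{L(X)}\le Ct^{-1}h^{r/2}$, and then interpolate (the paper packages this last step as Lemmas~\ref{lemma:interpolation-2} and \ref{lemma:interpolation-new-norm}).

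The gap is in how you propose to obtain the two $\alpha=1/2$ bounds. Rerunning the Duhamel representation from Theorem~\ref{theorem:1}, with $S_h$ as propagator, does not close. The cancellation you single out is real, but after it the remaining pieces --- for instance $A^{1/2}S_h(t/2)\pi_h\Delta_h AS(t/2)x$ and the two integrals --- still carry $A^{1/2}$ on an $S_h$-factor rather than on $\Delta_h$. Since $A^{1/2}$ does not commute with $S_h$, the only passage available is $A^{1/2}=A^{1/2}A_h^{-1}\cdot A_h$ on $X_h$; the factor $A^{1/2}A_h^{-1}$ is indeed bounded by \ref{cond:abstract-solution-operator-2}, but the extra $A_h$ then costs a full $t^{-1}$ via discrete smoothing, so the boundary term above comes out as $h^rt^{-2}$, not $h^{r/2}t^{-1}$. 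The same obstruction blocks the $t^{-1/2}$ corner: even your first combined piece $A^{1/2}\Delta_hAS(t)x$ only gives $h^{r/2}t^{-1}$, and the trivial split $A^{1/2}S(t)-A^{1/2}S_h(t)\pi_h$ yields $t^{-1}$ on the discrete part. Your own observation that $\|A^{1/2}A_h^{-1/2}\|\sim h^{-r/2}$ is precisely why there is no half-power route through $S_h$; so the claim that ``every surviving $A^{1/2}$ is paired with a $\Delta_h$'' cannot be realised with this decomposition.

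The paper's remedy is to swap the roles in the Duhamel formula: from $\dot e_h=-Ae_h+(A_h-A)S_h(t)\pi_hx=-Ae_h+A\Delta_hA_hS_h(t)\pi_hx$ (using $(A_h-A)y=A\Delta_hA_hy$ on $X_h$, read in $D(A^{-1/2})$) one gets the \emph{continuous} semigroup as propagator,
\[
A^{1/2}e_h(t)=A^{1/2}S(t)(I-\pi_h)x+\int_0^tAS(t-s)\,A^{1/2}\Delta_h\,A_hS_h(s)\pi_hx\,ds.
\]
Now $A^{1/2}$ commutes with $S(t-s)$ and lands directly on $\Delta_h$, where \ref{cond:abstract-solution-operator-2} applies, while $S_h$ appears only through $A_hS_h(s)\pi_h$ and $S_h(s)\pi_h$, handled by Lemma~\ref{lemma:discrete-semigroup-smoothing}. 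With this representation the $t/2$-split and integrations by parts go through exactly as you envisage, and both corners close; condition \ref{cond:discrete-growth} enters via $\|\Delta_hA_h\pi_h\|_{L(X)}\le C$ and $\|A_h^{1/2}\pi_h\|_{L(X)}\le Ch^{-r/2}$ in the $t^{-1/2}$ estimate.
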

For the proof of Lemma \ref{lemma:1}, we need the following couple of lemmas. 
\begin{lemma}\label{lemma:interpolation-2}
    Let $B_{t,h} \in L(X)$ be a collection of bounded linear operators satisfying,
    \begin{align*}
        \Vert B_{t,h} \Vert_{L(X)} \leq C t^{-1/2}, \quad \Vert B_{t,h} \Vert_{L(X)} \leq C t^{-1} h^{r/2}, 
    \end{align*}
    for some $C > 0$. Then, the following estimate holds,
    \begin{align*}
        \Vert B_{t,h} x \Vert \leq C t^{-\theta / r} h^{\theta - r/2} \Vert x \Vert_X, \quad x \in X, 
    \end{align*}
    for $\theta \in [r/2,r]$. 
\end{lemma}
\begin{proof}
    We have for $\beta \in [0,1]$, 
    \begin{align*}
        \Vert B_{t,h} \Vert_{L(X)} &= \Vert B_{t,h} \Vert_{L(X)}^{\beta} \Vert B_{t,h} \Vert_{L(X)}^{1- \beta} \\
        &\leq C ( t^{-1/2} )^{\beta} ( t^{-1} h^{r/2} )^{1-\beta} \\
        &\leq C t^{-\theta / r } h^{\theta - r/2},
    \end{align*}
    where we have set and $\theta = r - r \beta / 2 \in [r/2,r]$. 
\end{proof}
\begin{lemma}\label{lemma:interpolation-new-norm}
    Let $B_{t,h} \in L(X)$ be a collection of bounded linear operators satisfying the inequalities of Lemma \ref{lemma:interpolation}, and assume that $(\lambda + A)^{1/2} B_{t,h}$ satisfies the inequalities of Lemma \ref{lemma:interpolation-2}. Then, 
    \begin{align*}
        \Vert (\lambda + A)^{\alpha} B_{t,h} x \Vert_X \leq C t^{-\theta / r} h^{\theta - \alpha r} \Vert x \Vert_X
    \end{align*}
    where $\alpha \in [0,1/2]$ and $\theta \in [r\alpha, r]$. 
\end{lemma}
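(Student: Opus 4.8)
The plan is to interpolate, at the level of operator norms, the bound on $B_{t,h}$ furnished by Lemma \ref{lemma:interpolation} against the bound on $(\lambda+A)^{1/2}B_{t,h}$ furnished by Lemma \ref{lemma:interpolation-2}, using the pointwise interpolation inequality of Lemma \ref{lemma:interpolation-inequality} to pass between the two, and then to reconcile the resulting exponents with the stated ranges of $\theta$ and $\alpha$.

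First I would record the two families of endpoint estimates. Since $B_{t,h}$ satisfies the hypotheses of Lemma \ref{lemma:interpolation}, its conclusion with $\rho = 0$ gives
\begin{align*}
    \Vert B_{t,h} \Vert_{L(X)} \leq C t^{-\theta_0/r} h^{\theta_0}, \quad \theta_0 \in [0,r],
\end{align*}
while the assumption that $(\lambda+A)^{1/2}B_{t,h}$ satisfies the hypotheses of Lemma \ref{lemma:interpolation-2} yields
\begin{align*}
    \Vert (\lambda+A)^{1/2} B_{t,h} \Vert_{L(X)} \leq C t^{-\theta_1/r} h^{\theta_1 - r/2}, \quad \theta_1 \in [r/2,r].
\end{align*}
In particular $(\lambda+A)^{1/2}B_{t,h} \in L(X)$, so $B_{t,h}y \in D((\lambda+A)^{1/2}) \subseteq D((\lambda+A)^{\alpha})$ for every $\alpha \in [0,1/2]$ and $y \in X$, which is what is needed for the fractional power below to make sense.

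Next I would turn the first (pointwise) inequality of Lemma \ref{lemma:interpolation-inequality} into an operator-norm inequality. Applying it with $\phi = 1/2$ and interpolation exponent $2\alpha \in [0,1]$ to the vector $x = B_{t,h}y$, then bounding each factor by the corresponding operator norm times $\Vert y \Vert_X$ and taking the supremum over $\Vert y \Vert_X \leq 1$, should give
\begin{align*}
    \Vert (\lambda+A)^{\alpha} B_{t,h} \Vert_{L(X)} \leq C \Vert (\lambda+A)^{1/2} B_{t,h} \Vert_{L(X)}^{2\alpha} \Vert B_{t,h} \Vert_{L(X)}^{1-2\alpha}.
\end{align*}
Substituting the two endpoint bounds and collecting exponents turns the right-hand side into $C\, t^{-(2\alpha\theta_1 + (1-2\alpha)\theta_0)/r}\, h^{2\alpha(\theta_1 - r/2) + (1-2\alpha)\theta_0}$, and setting $\theta := 2\alpha\theta_1 + (1-2\alpha)\theta_0$ the time exponent is exactly $-\theta/r$ while the space exponent collapses to $\theta - \alpha r$, as desired.

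The step I expect to require the most care is the final bookkeeping: verifying that, as $\theta_0$ ranges over $[0,r]$ and $\theta_1$ over $[r/2,r]$, the combination $\theta = 2\alpha\theta_1 + (1-2\alpha)\theta_0$ sweeps out precisely the stated interval $[r\alpha, r]$. Since $\theta$ depends linearly on $(\theta_0,\theta_1)$ with the nonnegative weights $1-2\alpha$ and $2\alpha$, its image over the box $[0,r]\times[r/2,r]$ is the interval between its minimum at $(\theta_0,\theta_1) = (0,r/2)$, equal to $\alpha r$, and its maximum at $(\theta_0,\theta_1) = (r,r)$, equal to $r$; this is exactly $[r\alpha, r]$. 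Everything else is routine, with the analytic content already supplied by Lemma \ref{lemma:interpolation-inequality}.
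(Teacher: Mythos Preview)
Your proposal is correct and follows essentially the same approach as the paper: both apply the first inequality of Lemma \ref{lemma:interpolation-inequality} with $\phi=1/2$ and exponent $2\alpha$ to $B_{t,h}y$, insert the $\rho=0$ case of Lemma \ref{lemma:interpolation} and the conclusion of Lemma \ref{lemma:interpolation-2}, and set $\theta = 2\alpha\theta_1 + (1-2\alpha)\theta_0$. Your explicit verification that the resulting $\theta$ ranges over $[\alpha r, r]$ is a welcome addition that the paper leaves implicit.
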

\begin{proof}
By using Lemma \ref{lemma:interpolation-inequality} on the inequalities from Lemma \ref{lemma:interpolation} with $\rho = 0$ and Lemma \ref{lemma:interpolation-2}, we get for $\alpha \in [0,1/2]$, and $\beta \in \{ 0, 1 \}$,
\begin{align*}
    \Vert (\lambda + A)^{\alpha} B_{t,h} \Vert_{L(X)} &\leq C \Vert (\lambda + A)^{1/2} B_{t,h} \Vert_{L(X)}^{2 \alpha} \Vert B_{t,h} \Vert_{L(X)}^{1 - 2\alpha} \\
    &\leq C ( t^{-\theta_1 / r} h^{\theta_1 - r / 2})^{2 \alpha} ( t^{-\theta_2 / r} h^{\theta_2})^{1 - 2\alpha} \\
    &\leq C t^{-\theta / r } h^{\theta - \alpha r}, 
\end{align*}
where $\theta_1 \in [r/2,r], \theta_2 \in [0, r]$, $\theta = 2 \alpha \theta_1 + (1 - 2\alpha) \theta_2 \in [\alpha r, r]$. 
\end{proof}

\begin{proof}[Proof of Lemma \ref{lemma:1}]
The proof is similar in idea to the proof of Theorem \ref{theorem:1}. We need to show the inequalities of Lemma \ref{lemma:interpolation-2} for the operator $e^{-\lambda t + \epsilon t} (\lambda + A)^{1/2} (S(t) - S_h(t) \pi_h)$, and then employ Lemma \ref{lemma:interpolation-new-norm} to get the desired inequality (since we have the remaining inequalities for the operator $e^{-\lambda t + \epsilon t}(S(t) - S_h(t) \pi_h)$ by the proof of Theorem \ref{theorem:1}). Here, $\epsilon$ is the same as that in Theorem \ref{theorem:1}. We can, as in the proof of Theorem \ref{theorem:1}, assume without loss of generality and for ease of notation that $\lambda = 0$, since $e^{-\lambda \, \cdot} S(\cdot)$ is the operator semigroup generated by $-\lambda - A$. Therefore, it suffices to show that 
\begin{align*}
    \Vert A^{1/2} (S(t) - S_h(t) \pi_h) x \Vert_X &\leq C e^{-\epsilon t} t^{-1/2} \Vert x \Vert_X, \\
    \Vert A^{1/2} (S(t) - S_h(t) \pi_h) x \Vert_X &\leq C e^{-\epsilon t} t^{-1} h^{r/2} \Vert x \Vert_X. 
\end{align*}

Similarly as in the proof of Theorem \ref{theorem:1} we let $\Delta_h := A^{-1} - A_h^{-1} \pi_h$. Let now $e_h(t) := S(t) x - S_h(t) \pi_h x$. Then,
\begin{align*}
    \dot{e}_h(t) &= -A e_h(t) + (A_h - A) S_h(t) \pi_h x \\
    &= -A e_h(t) + A \Delta_h A_h S_h(t) \pi_h x.
\end{align*}
It may be seen that condition \ref{cond:abstract-solution-operator-2} implies $X_h \subseteq D(A^{1/2})$, and since we are applying $A$ to members of $X_h$, the equality above is understood in a $D(A^{-1/2})$-sense, which we understand as the completion of $X$ using the norm $\Vert A^{-1/2} \cdot \Vert_X$. Using that $A^{1/2}$ is closed to pass it under the integral sign, we get (note that $e_h(0) = (I - \pi_h) x$), 
\begin{align*}
    A^{1/2} e_h(t) = A^{1/2} S(t) (I - \pi_h) x + \int_0^t A S(t-s) A^{1/2} \Delta_h A_h S_h(s) \pi_h x \, ds = (I) + (II). 
\end{align*}

For $(I)$ we have:
\begin{align*}
    \Vert (I) \Vert_X &= \Vert A S(t) A^{1/2} (A^{-1} - A^{-1} \pi_h) x \Vert_X \\
    &= \Vert A S(t) A^{1/2} (\Delta_h - \Delta_h \pi_h) x \Vert_X \\
    &\leq \Vert A S(t) \Vert_{L(X)} \Vert \Delta_h (I - \pi_h) x \Vert_X,
\end{align*}
so that $\Vert (I) \Vert_X \leq C e^{-\epsilon t} t^{-1} h^{r/2} \Vert x \Vert_X$, by condition \ref{cond:abstract-solution-operator-2}, the boundedness of $I - \pi_h$ and Lemma \ref{lemma:analytic-semigroup}. Finally, $\Vert (I) \Vert_X \leq C e^{-\epsilon t} t^{-1/2} \Vert x \Vert_X$ by Lemma \ref{lemma:analytic-semigroup} and the boundedness of $I - \pi_h$.

For the second term, we have,
\begin{align*}
    (II) = \int_0^{t/2} A S(t-s) A^{1/2} \Delta_h A_h S_h(s) \pi_h x \, ds + \int_{t/2}^t A S(t-s) A^{1/2} \Delta_h A_h S_h(s) \pi_h x \, ds = (i) + (ii),
\end{align*}
and using integration by parts, we have,
\begin{align*}
    (i) = -A S(t/2) A^{1/2} \Delta_h S_h(t/2) \pi_h x + A S(t) A^{1/2} \Delta_h \pi_h x + \int_0^{t/2} A^2 S(t-s) A^{1/2} \Delta_h S_h(s) \pi_h x \, ds,
\end{align*}
where we used that,
\begin{align*}
    \int_0^{t/2} \frac{d}{ds} \bigg(  A S(t-s) A^{1/2} \Delta_h S_h(s) \pi_h x \bigg) \, ds &= \int_0^{t/2} A^2 S(t-s) A^{1/2} \Delta_h S_h(s) \pi_h x \, ds \\
    &\quad - \int_0^{t/2} A S(t-s) A^{1/2} \Delta_h A_h S_h(s) \pi_h x \, ds \\
    &= A S(t/2) A^{1/2} \Delta_h S_h(t/2) \pi_h x - A S(t) A^{1/2}\Delta_h \pi_h x.
\end{align*}
Using integrating by parts on $(ii)$ as well, gives, 
\begin{align*}
    (ii) = A^{1/2} \Delta_h A_h S_h(t) \pi_h x - S(t/2) A^{1/2} \Delta_h A_h S_h(t/2) \pi_h x + \int_{t/2}^t S(t-s) A^{1/2} \Delta_h A_h^2 S_h(s) \pi_h x \, ds, 
\end{align*}
where we used that,
\begin{align*}
    \int_{t/2}^t \frac{d}{ds} \bigg( S(t-s) A^{1/2} \Delta_h A_h S_h(s) \pi_h x \bigg) \, ds &= \int_{t/2}^t A S(t-s) A^{1/2} \Delta_h A_h S_h(s) \pi_h x \, ds \\
    &\quad - \int_{t/2}^t S(t-s) A^{1/2} \Delta_h A_h^2 S_h(s) \pi_h x \, ds \\
    &= A^{1/2} \Delta_h A_h S_h(t) \pi_h x - S(t/2) A^{1/2} \Delta_h A_h S_h(t/2) \pi_h x.
\end{align*}
We start by showing $\Vert (II) \Vert_X \leq C e^{-\epsilon t} t^{-1} h^{r/2} \Vert x \Vert_X$. For the first two terms of $(i)$, we have by Lemma \ref{lemma:analytic-semigroup}, condition \ref{cond:abstract-solution-operator-2} and Lemma \ref{lemma:discrete-semigroup-smoothing},
\begin{align*}
    \Vert A S(t/2) A^{1/2} \Delta_h S_h(t/2) \pi_h x \Vert_X &\leq \Vert A S(t/2) \Vert_{L(X)} \Vert A^{1/2} \Delta_h \Vert_{L(X)} \Vert S_h(t/2) \pi_h \Vert_{L(X)} \Vert x \Vert_X \leq C e^{-\epsilon t} t^{-1} h^{r/2} \Vert x \Vert_X, \\
    \Vert A S(t) A^{1/2} \Delta_h \pi_h x \Vert_X &\leq C \Vert A S(t) \Vert_{L(X)} \Vert A^{1/2} \Delta_h \Vert_{L(X)} \Vert x \Vert_X \leq C e^{-\epsilon t} t^{-1} h^{r/2} \Vert x \Vert_X,
\end{align*}
while for the third term, we have,
\begin{align*}
    \Vert \int_0^{t/2} A^2 S(t-s) A^{1/2} \Delta_h S_h(s) \pi_h x \, ds \Vert_X &\leq \int_0^{t/2} \Vert A^2 S(t-s) \Vert_{L(X)} \Vert A^{1/2} \Delta_h \Vert_{L(X)} \Vert 
    S_h(s) \pi_h \Vert_{L(X)} \Vert x \Vert_X \, ds \\
    &\leq C e^{-\epsilon t} \int_0^{t/2} (t-s)^{-2} h^{r/2} \Vert x \Vert_X \, ds \\
    &\leq C e^{-\epsilon t} t^{-1} h^{r/2} \Vert x \Vert_X.
\end{align*}
For the three terms of $(ii)$, we get similarly,
\begin{align*}
    \Vert A^{1/2} \Delta_h A_h S_h(t) \pi_h x \Vert_X &\leq C e^{-\epsilon t} t^{-1} h^{r/2} \Vert x \Vert_X, \\
    \Vert S(t/2) A^{1/2} \Delta_h A_h S_h(t/2) \pi_h x \Vert_X &\leq C e^{-\epsilon t} t^{-1} h^{r/2} \Vert x \Vert_X,
\end{align*}
and, 
\begin{align*}
    \Vert \int_{t/2}^t S(t-s) A^{1/2} \Delta_h A_h^2 S_h(s) \pi_h x \, ds \Vert_X &\leq \int_{t/2}^t \Vert S(t-s) \Vert_{L(X)} \Vert A^{1/2} \Delta_h \Vert_{L(X)} \Vert A_h^2 S_h(s) \pi_h \Vert_{L(X)} \Vert x \Vert_X \, ds \\
    &\leq C e^{-\epsilon t} \int_{t/2}^t s^{-2} h^{r/2} \Vert x \Vert_X \, ds \\
    &\leq C e^{-\epsilon t} t^{-1} h^{r/2} \Vert x \Vert_X. 
\end{align*}

It only remains to show that $\Vert (II) \Vert_X \leq C e^{-\epsilon t} t^{-1/2} \Vert x \Vert_X$. For $(i)$ we have,
\begin{align*}
    \Vert \int_0^{t/2} A S(t-s) A^{1/2} \Delta_h A_h S_h(s) \pi_h x \, ds \Vert_X &\leq \int_0^{t/2} \Vert A^{3/2} S(t-s) \Vert_{L(X)} \Vert \Delta_h  A_h \pi_h \Vert_{L(X)} \Vert S_h(s) \pi_h \Vert_{L(X)} \Vert x \Vert_X \, ds \\
    &\leq C e^{-\epsilon t} \int_0^{t/2} (t-s)^{-3/2} \, ds \, \Vert x \Vert_X \\
    &\leq C e^{-\epsilon t} t^{-1/2} \Vert x \Vert_X,
\end{align*}
where we used that $\Vert \Delta_h A_h \pi_h \Vert_{L(X)} \leq C$ due to condition \ref{cond:abstract-solution-operator} and \ref{cond:discrete-growth}. For the first term of $(ii)$, we have
\begin{align*}
    \Vert A^{1/2} \Delta_h A_h S_h(t) \pi_h x \Vert_X &\leq \Vert A^{1/2} \Delta_h \Vert_{L(X)} \Vert A_h^{1/2} \pi_h \Vert_{L(X)} \Vert A_h^{1/2} S_h(t) \pi_h \Vert_{L(X)} \Vert x \Vert_X \\
    &\leq C e^{-\epsilon t} h^{r/2} h^{-r/2} t^{-1/2} \Vert x\Vert_X, 
\end{align*}
where we used that owing to condition \ref{cond:discrete-growth} and Lemma \ref{lemma:interpolation-inequality}, we have $\Vert A_h^{1/2} x \Vert_X \leq Ch^{-r/2} \Vert x \Vert_X$ for any $x \in X_h$. For the second term of $(ii)$, we get
\begin{align*}
    \Vert S(t/2) A^{1/2} \Delta_h A_h S_h(t/2) \pi_h x \Vert_X &\leq \Vert A^{1/2} S(t/2) \Vert_{L(X)} \Vert \Delta_h A_h \pi_h \Vert_{L(X)} \Vert S_h(t/2) \pi_h \Vert_{L(X)} \Vert x \Vert_X \\
    &\leq C e^{-\epsilon t} t^{-1/2} \Vert x \Vert_X.
\end{align*}
Finally for the last term of $(ii)$, 
\begin{align*}
    \Vert \int_{t/2}^t S(t-s) A^{1/2} \Delta_h A_h^2 S_h(s) \pi_h x \, ds \Vert_X &\leq \int_{t/2}^t \Vert A^{1/2} S(t-s) \Vert_{L(X)} \Vert \Delta_h A_h \pi_h \Vert_{L(X)} \Vert A_h S_h(s) \pi_h \Vert_{L(X)} \Vert x \Vert_X \, ds \\
    &\leq C e^{-\epsilon t} \int_{t/2}^t (t-s)^{-1/2} s^{-1} \, ds \, \Vert x \Vert_X \\
    &\leq C e^{-\epsilon t} t^{-1/2} \Vert x \Vert_X.
\end{align*}
Therefore, all inequalities hold, and the proof is finished. 
\end{proof}

The next lemma is another useful semidiscrete error estimate. 
\begin{lemma}\label{lemma:2}
Under the same conditions as in Lemma \ref{lemma:1}, there is $C, \epsilon > 0$ such that
\begin{align*}
    \Vert (\lambda + A)^{-\alpha} (u(t) - u_h(t)) \Vert_X \leq C e^{(\lambda-\epsilon) t} t^{-\theta / r} h^{\theta + \alpha r} \Vert x \Vert_X,
\end{align*}
for $\alpha \in [0,1/2]$ and $\theta \in [0,r-\alpha r]$. 
\end{lemma}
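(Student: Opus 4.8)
The plan is to mirror the proof of Lemma \ref{lemma:1}, but now applying the negative fractional power $(\lambda+A)^{-1/2}$ in place of $(\lambda+A)^{1/2}$, and then to interpolate in the exponent $\alpha$. As there, I may assume $\lambda=0$. Since the case $\alpha=0$ is already contained in Theorem \ref{theorem:1} (with $\rho=0$), it suffices to establish the two bounds
\begin{align*}
    \Vert A^{-1/2}(S(t)-S_h(t)\pi_h)x\Vert_X \leq C h^{r/2}\Vert x\Vert_X, \qquad \Vert A^{-1/2}(S(t)-S_h(t)\pi_h)x\Vert_X \leq C t^{-1/2}h^{r}\Vert x\Vert_X,
\end{align*}
and then to interpolate these against Theorem \ref{theorem:1} by means of Lemma \ref{lemma:interpolation-inequality}.

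For the representation, I set $e_h(t):=S(t)x-S_h(t)\pi_h x$ and reuse the Duhamel identity underlying the proof of Lemma \ref{lemma:1}, namely $e_h(t)=S(t)(I-\pi_h)x+\int_0^t S(t-s)A\Delta_h A_h S_h(s)\pi_h x\,ds$ with $\Delta_h=A^{-1}-A_h^{-1}\pi_h$, understood in the completion of $X$ under $\Vert A^{-1/2}\cdot\Vert_X$. Applying the closed operator $A^{-1/2}$ and using $A^{-1/2}A=A^{1/2}$ together with $A^{1/2}\Delta_h\in L(X)$ (condition \ref{cond:abstract-solution-operator-2}) gives $A^{-1/2}e_h(t)=(I)+(II)$, where $(I)=A^{-1/2}S(t)(I-\pi_h)x$ and $(II)=\int_0^t A^{1/2}S(t-s)\Delta_h A_h S_h(s)\pi_h x\,ds$. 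The key algebraic observation is that, since $\pi_h^2=\pi_h$, one has $A^{-1}(I-\pi_h)=\Delta_h(I-\pi_h)$, whence $(I)=S(t)A^{1/2}\Delta_h(I-\pi_h)x$. The first bound for $(I)$ then follows from $\Vert S(t)\Vert_{L(X)}\leq C$ and $\Vert A^{1/2}\Delta_h\Vert_{L(X)}\leq Ch^{r/2}$ (condition \ref{cond:abstract-solution-operator-2}), and the second from $\Vert A^{1/2}S(t)\Vert_{L(X)}\leq Ct^{-1/2}$ (Lemma \ref{lemma:analytic-semigroup}) together with $\Vert\Delta_h\Vert_{L(X)}\leq Ch^{r}$ (condition \ref{cond:abstract-solution-operator}).

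For the integral term $(II)$ I split $\int_0^t=\int_0^{t/2}+\int_{t/2}^t$. On $[t/2,t]$ the factor $A_h S_h(s)$ is controlled by Lemma \ref{lemma:discrete-semigroup-smoothing} via $\Vert A_h S_h(s)\pi_h\Vert_{L(X)}\leq Cs^{-1}\leq Ct^{-1}$, and distributing $A^{1/2}$ either onto $\Delta_h$ (for the $h^{r/2}$ bound) or onto $S(t-s)$ (for the $t^{-1/2}h^{r}$ bound) yields the two estimates after integrating in $s$. On $[0,t/2]$ the singularity of $A_h S_h(s)$ as $s\to0$ is the main obstacle; as in Lemma \ref{lemma:1} I remove it by integrating by parts through $\frac{d}{ds}S_h(s)=-A_h S_h(s)$, which produces two boundary terms together with an integral $\int_0^{t/2}A^{k}S(t-s)\Delta_h S_h(s)\pi_h x\,ds$ carrying a higher power of $A$. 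Here $t-s\geq t/2$ keeps each $\Vert A^{k}S(t-s)\Vert_{L(X)}$ bounded by $C(t/2)^{-k}$, and the integration length $t/2$ absorbs exactly one power of $t^{-1}$, so every piece meets the two target bounds.

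Finally I interpolate. Writing $B_{t,h}:=S(t)-S_h(t)\pi_h$, the two bounds above give $\Vert B_{t,h}(\lambda+A)^{-1/2}\Vert_{L(X)}\leq Ct^{-\theta_1/r}h^{\theta_1+r/2}$ for $\theta_1\in[0,r/2]$, while Theorem \ref{theorem:1} gives $\Vert B_{t,h}\Vert_{L(X)}\leq Ct^{-\theta_2/r}h^{\theta_2}$ for $\theta_2\in[0,r]$. Applying the second inequality of Lemma \ref{lemma:interpolation-inequality} with $\phi=-1/2$ and exponent $2\alpha$ yields $\Vert B_{t,h}(\lambda+A)^{-\alpha}\Vert_{L(X)}\leq C\Vert B_{t,h}(\lambda+A)^{-1/2}\Vert_{L(X)}^{2\alpha}\Vert B_{t,h}\Vert_{L(X)}^{1-2\alpha}$ for $\alpha\in[0,1/2]$; inserting the two bounds gives $Ct^{-\theta/r}h^{\theta+\alpha r}$ with $\theta=2\alpha\theta_1+(1-2\alpha)\theta_2$, which ranges over $[0,r-\alpha r]$ as $\theta_1,\theta_2$ vary. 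This is exactly the claimed estimate, and the only genuinely delicate step is the $s\to0$ behaviour of $(II)$, dealt with by the integration by parts above.
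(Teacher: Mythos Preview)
Your proof is correct and follows essentially the same route as the paper: reduce to the two endpoint bounds $\Vert A^{-1/2}e_h(t)\Vert_X\leq Ch^{r/2}$ and $\leq Ct^{-1/2}h^r$, handle $(I)$ via $A^{-1}(I-\pi_h)=\Delta_h(I-\pi_h)$, split $(II)$ at $t/2$ with integration by parts on $[0,t/2]$, and interpolate against Theorem \ref{theorem:1}. One notational slip to fix: in your final paragraph you write $B_{t,h}(\lambda+A)^{-1/2}$ and $B_{t,h}(\lambda+A)^{-\alpha}$, but both your own analysis and the lemma's statement place the fractional power on the \emph{left} of $B_{t,h}$; the needed left-sided variant of Lemma \ref{lemma:interpolation-inequality} follows at once from its first inequality applied pointwise to $B_{t,h}x$, and this is precisely what the paper packages as Lemma \ref{lemma:interpolation-3}.
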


To show this we proceed similarly as we did in the case of Lemma \ref{lemma:1}.
\begin{lemma}\label{lemma:interpolation-3}
    Let $B_{t,h} \in L(X)$ be a collection of bounded linear operators satisfying,
    \begin{align*}
        \Vert B_{t,h} \Vert_{L(X)} \leq C h^{r/2}, \quad \Vert B_{t,h} \Vert_{L(X)} \leq C t^{-1/2} h^r, 
    \end{align*}
    for some $C > 0$ independent of $t, h > 0$. Then, the following estimate holds,
    \begin{align*}
        \Vert B_{t,h} x \Vert \leq C t^{-\theta / r} h^{\theta + r / 2} \Vert x \Vert_X, \quad x \in X,
    \end{align*}
    for $\theta \in [0,r/2]$.
\end{lemma}
\begin{proof}
    We get,
    \begin{align*}
        \Vert B_{t,h} \Vert_{L(X)} &= \Vert B_{t,h} \Vert_{L(X)}^{\beta} \Vert B_{t,h} \Vert_{L(X)}^{1- \beta} \\
        &\leq C ( h^{r/2} )^{\beta} ( t^{-1/2} h^{r} )^{1-\beta} \\
        &\leq C t^{-\theta / r} h^{\theta + r/2},
    \end{align*}
    where we have set $\theta = r(1-\beta) / 2 \in [0, r/2]$. 
\end{proof}

\begin{lemma}\label{lemma:interpolation-4}
    Let $B_{t,h} \in L(X)$ be a collection of bounded linear operators satisfying the inequalities of Lemma \ref{lemma:interpolation} and assume that $(\lambda + A)^{-1/2} B_{t,h}$ satisfies the inequalities of Lemma \ref{lemma:interpolation-3}. Then, 
    \begin{align*}
        \Vert (\lambda + A)^{-\alpha} B_{t,h} x \Vert_X \leq C t^{-\theta / r} h^{\theta + \alpha r / 2} \Vert x \Vert_X,  
    \end{align*}
    where $\alpha \in [0,1/2]$, $\theta \in [0, r-\alpha r]$.  
\end{lemma}
\begin{proof}
We use Lemma \ref{lemma:interpolation-inequality} on the inequalities from Lemma \ref{lemma:interpolation} with $\rho = 0$ and Lemma \ref{lemma:interpolation-3}: for $\alpha \in [0,1/2]$,
\begin{align*}
    \Vert (\lambda + A)^{-\alpha} B_{t,h} \Vert_{L(X)} &\leq C \Vert (\lambda + A)^{-1/2} B_{t,h} \Vert_{L(X)}^{2 \alpha} \Vert B_{t,h} \Vert_{L(X)}^{1 - 2\alpha} \\
    &\leq C ( t^{-\theta_1 / r} h^{\theta_1 + r / 2})^{2 \alpha} ( t^{-\theta_2 / r} h^{\theta_2})^{1 - 2\alpha} \\
    &\leq C t^{-\theta / r } h^{\theta + \alpha r}, 
\end{align*}
where $\theta_1 \in [0,r/2], \theta_2 \in [0, r]$, $\theta = 2 \alpha \theta_1 + (1 - 2\alpha) \theta_2 \in [0, r-\alpha r]$. 
\end{proof}

\begin{proof}[Proof of Lemma \ref{lemma:2}]
    The proof is similar to the proof of Theorem \ref{theorem:1} and Lemma \ref{lemma:1}. Also here we can without loss of generality and for ease of notation assume $\lambda = 0$. Owing to Lemma \ref{lemma:interpolation-4}, it suffices to only show that the inequalities of Lemma \ref{lemma:interpolation-3} holds for the operator $e^{\epsilon t} A^{-1/ 2} (S(t) - S_h(t) \pi_h)$ (with $\epsilon$ as in Lemma \ref{lemma:1}) since we have the remaining inequalities by the proof of Theorem \ref{theorem:1}. Similarly as in the proof of Lemma \ref{lemma:1} we set $\Delta_h := A^{-1} - A_h^{-1} \pi_h$, and $e_h(t) := S(t) x - S_h(t) \pi_h x$ for some $x \in X$. 
    
    By the proof of Lemma \ref{lemma:1}, we have (using that $A^{-1/2}$ is bounded, to pass it under the integral sign), 
    \begin{align*}
        A^{-1/2} e_h(t) = A^{-1/2} S(t) (I - \pi_h ) x + \int_0^t A^{-1/2} A S(t-s) \Delta_h A_h S_h(s) \pi_h x \, ds = (I) + (II).
    \end{align*}
    For the first term, we have by Lemma \ref{lemma:analytic-semigroup} and condition \ref{cond:abstract-solution-operator-2}
    \begin{align*}
        \Vert (I) \Vert_X &= \Vert S(t) A^{1/2} (A^{-1} - A^{-1} \pi_h) x \Vert_X \\
        &= \Vert S(t) A^{1/2} (\Delta_h - \Delta_h \pi_h) x \Vert_X \\
        &\leq \Vert S(t) \Vert_{L(X)} \Vert A^{1/2} (\Delta_h - \Delta_h \pi_h) x \Vert_X \\
        &\leq C e^{-\epsilon t} h^{r/2} \Vert x \Vert_X, 
    \end{align*}
    Further, we have by condition \ref{cond:abstract-solution-operator}
    \begin{align*}
        \Vert (I) \Vert_X &\leq \Vert A^{1/2} S(t) (A^{-1} - A^{-1} \pi_h) x \Vert_X \\
        &\leq \Vert A^{1/2} S(t) \Vert_{L(X)} \Vert \Delta_h - \Delta_h \pi_h \Vert_{L(X)} \Vert x \Vert_X \\
        &\leq C e^{-\epsilon t} t^{-1/2} h^r \Vert x \Vert_X.
    \end{align*}
    This finishes the $(I)$-term. 

    For the $(II)$-term, we start by showing that $\Vert (II) \Vert_X \leq C e^{-\epsilon t} h^{r/2} \Vert x \Vert_X$. By the proof of Lemma \ref{lemma:1}, we have,
    \begin{align*}
        (II) &= \bigg(-A^{1/2} S(t/2) \Delta_h S_h(t/2) \pi_h x + S(t) A^{1/2} \Delta_h \pi_h x + \int_0^{t/2} A^{3/2} S(t-s) \Delta_h S_h(s) \pi_h x \, ds \bigg)  \\
        &\quad + \int_{t/2}^t A^{1/2} S(t-s) \Delta_h A_h S_h(s) \pi_h x \, ds \\
        &= (i) + (ii).
    \end{align*}
    For the two first terms of $(i)$ we get using condition \ref{cond:abstract-solution-operator-2}, Lemma \ref{lemma:analytic-semigroup} and \ref{lemma:discrete-semigroup-smoothing}, 
    \begin{align*}
        \Vert S(t/2) A^{1/2} \Delta_h S(t/2) \pi_h x \Vert_X &\leq \Vert S(t/2) \Vert_{L(X)} \Vert A^{1/2} \Delta_h \Vert_{L(X)} \Vert S_h(t/2) \pi_h x \Vert_X \Vert \leq C e^{-\epsilon t} h^{r/2} \Vert x \Vert_X, \\
        \Vert S(t) A^{1/2} \Delta_h \pi_h x \Vert_X &\leq \Vert S(t) \Vert_{L(X)} \Vert A^{1/2} \Delta_h \Vert_{L(X)} \Vert \pi_h x \Vert_X \leq C e^{-\epsilon t} h^{r/2} \Vert x \Vert_X.
    \end{align*}
    For the integral term of $(i)$, we get by Lemma \ref{lemma:analytic-semigroup}, \ref{lemma:discrete-semigroup-smoothing} and condition \ref{cond:abstract-solution-operator-2},
    \begin{align*}
        \Vert \int_0^{t/2} A^{3/2} S(t-s) \Delta_h S_h(s) \pi_h x \, ds \Vert_X &\leq \int_0^{t/2} \Vert A S(t-s) \Vert_{L(X)} \Vert A^{1/2} \Delta_h \Vert_{L(X)} \Vert S_h(s) \pi_h x \Vert_X \, ds \\
        &\leq C e^{-\epsilon t} \int_0^{t/2} (t-s)^{-1} h^{r/2} \Vert x \Vert_X \, ds \\
        &\leq C e^{-\epsilon t} h^{r/2} \Vert x \Vert_X. 
    \end{align*}
    Finally for $(ii)$ we get by similar arguments
    \begin{align*}
        \Vert \int_{t/2}^t A^{1/2} S(t-s) \Delta_h A_h S_h(s) \pi_h x \, ds \Vert_X &\leq \int_{t/2}^t \Vert S(t-s) \Vert_{L(X)} \Vert A^{1/2} \Delta_h \Vert_{L(X)} \Vert A_h S_h(s) \pi_h \Vert_{L(X)} \Vert x \Vert_X \, ds \\
        &\leq C e^{-\epsilon t} \int_{t/2}^t s^{-1} h^{r/2} \Vert x \Vert_X \, ds \\
        &\leq C e^{-\epsilon t} h^{r/2} \Vert x \Vert_X. 
    \end{align*}
    
    We now show that $\Vert (II) \Vert_X \leq C e^{-\epsilon t} t^{-1/2} h^r \Vert x \Vert_X$. For the first two terms of $(i)$ above, we get using Lemma \ref{lemma:analytic-semigroup} and condition \ref{cond:abstract-solution-operator},
    \begin{align*}
        \Vert S(t/2) A^{1/2} \Delta_h S(t/2) \pi_h x \Vert_X &\leq \Vert A^{1/2} S(t/2) \Vert_{L(X)} \Vert \Delta_h \Vert_{L(X)} \Vert S(t/2) x \Vert_X \Vert \leq C e^{-\epsilon t} t^{-1/2} h^r \Vert x \Vert_X \\
        \Vert S(t) A^{1/2} \Delta_h \pi_h x \Vert_X &\leq \Vert A^{1/2} S(t) \Vert_{L(X)} \Vert \Delta_h \Vert_{L(X)} \Vert \pi_h x \Vert_X \leq C e^{-\epsilon t} t^{-1/2} h^r \Vert x \Vert_X.
    \end{align*}
    For the integral term of $(i)$, we get,
    \begin{align*}
        \Vert \int_0^{t/2} A^{3/2} S(t-s) \Delta_h S_h(s) \pi_h x \, ds \Vert_X &\leq \int_0^{t/2} \Vert A^{3/2} S(t-s) \Vert_{L(X)} \Vert \Delta_h \Vert_{L(X)} \Vert S_h(s) \pi_h x \Vert_X \, ds \\
        &\leq C e^{-\epsilon t} \int_0^{t/2} (t-s)^{-3/2} h^r \Vert x \Vert_X \, ds \\
        &\leq C e^{-\epsilon t} t^{-1/2} h^r \Vert x \Vert_X. 
    \end{align*}
    Finally, for $(ii)$, we get,
    \begin{align*}
        \Vert \int_{t/2}^t A^{1/2} S(t-s) \Delta_h A_h S_h(s) \pi_h x \, ds \Vert_X &\leq \int_{t/2}^t \Vert A^{1/2} S(t-s) \Vert_{L(X)} \Vert \Delta_h \Vert_{L(X)} \Vert A_h S_h(s) \pi_h \Vert_{L(X)} \Vert x \Vert_X \, ds \\
        &\leq C e^{-\epsilon t} \int_{t/2}^t (t-s)^{-1/2} s^{-1} h^r \Vert x \Vert_X \, ds \\
        &\leq C e^{-\epsilon t} t^{-1/2} h^r \Vert x \Vert_X.
    \end{align*}
    This finishes the proof. 
\end{proof}

The next lemma is another useful semidiscrete error estimate, but accomodating less regular data than that of Theorem \ref{theorem:1}, and Lemma \ref{lemma:1} and \ref{lemma:2}. In order to show it, we need two additional conditions.
\begin{description}
    \myitem{(A8)}\label{cond:negative-rho} For $\lambda$ as in condition \ref{cond:sectorial} there is some $C > 0$, such that,
    \begin{align*}
        \Vert (\lambda + A_h)^{-1/2} \pi_h (\lambda + A)^{1/2} x \Vert_X \leq C \Vert x \Vert_X,
    \end{align*}
    for any $x \in X$. \medskip 
    \myitem{(A9)}\label{cond:abstract-solution-operator-3} For the same $\lambda$ as in condition \ref{cond:sectorial}, there is some $C > 0$ such that, 
    \begin{align*}
        \Vert ((\lambda + A)^{-1} - (\lambda + A_h)^{-1} \pi_h ) (\lambda + A)^{1/2} x \Vert_X \leq C h^{r/2} \Vert x \Vert_X, 
    \end{align*}
    for any $x \in X$. 
\end{description}

\begin{lemma}\label{lemma:negative-rho}
    Suppose the conditions of Assumption \ref{assumption:abstract-model}, \ref{cond:discrete-growth}, \ref{cond:negative-rho} and \ref{cond:abstract-solution-operator-3} holds. Then, there is $C, \epsilon > 0$ such that
    \begin{align*}
        \Vert u(t) - u_h(t) \Vert_X \leq C e^{(\lambda - \epsilon) t} t^{-\theta / r + \rho / r} h^{\theta} \Vert (\lambda + A)^{\rho / r} x \Vert_X,
    \end{align*}
    for $\theta \in [0,r]$ and $\rho \in [-r/2, 0] \cap [-r + \theta, 0]$.
\end{lemma}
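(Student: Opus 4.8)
The plan is to follow the template of the proofs of Lemma \ref{lemma:1} and \ref{lemma:2}: reduce the claim to two endpoint estimates for the error operator composed on the right with a half power of $\lambda+A$, and then interpolate. Writing $B_{t,h} := S(t) - S_h(t)\pi_h$ and substituting $y = (\lambda+A)^{\rho/r}x$ (so that $\rho\le 0$ corresponds to a nonnegative power $-\rho/r$ on the input side), the asserted estimate is equivalent to a bound on $\Vert B_{t,h}(\lambda+A)^{-\rho/r}\Vert_{L(X)}$. Mirroring Lemma \ref{lemma:interpolation-new-norm}, I claim it suffices to establish the two endpoint estimates
\[
\Vert B_{t,h}(\lambda+A)^{1/2}\Vert_{L(X)} \le C t^{-1/2}, \qquad \Vert B_{t,h}(\lambda+A)^{1/2}\Vert_{L(X)} \le C t^{-1}h^{r/2}.
\]
Indeed, once these hold, $B_{t,h}(\lambda+A)^{1/2}$ satisfies the hypotheses of Lemma \ref{lemma:interpolation-2} while $B_{t,h}$ satisfies those of Lemma \ref{lemma:interpolation}, and the operator (right-multiplication) form of Lemma \ref{lemma:interpolation-inequality}, namely $\Vert B_{t,h}(\lambda+A)^{\beta}\Vert_{L(X)} \le C\Vert B_{t,h}(\lambda+A)^{1/2}\Vert_{L(X)}^{2\beta}\Vert B_{t,h}\Vert_{L(X)}^{1-2\beta}$ for $\beta\in[0,1/2]$, produces the two-parameter family of bounds that, after renaming $\beta = -\rho/r$, reads exactly the claimed inequality on the stated range of $\theta$ and $\rho$. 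I would record this interpolation step as a short lemma analogous to Lemma \ref{lemma:interpolation-new-norm} and \ref{lemma:interpolation-3}. As in Theorem \ref{theorem:1}, I reduce to $\lambda=0$ throughout.

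The first endpoint estimate is where condition \ref{cond:negative-rho} enters. Splitting $B_{t,h}A^{1/2} = S(t)A^{1/2} - S_h(t)\pi_h A^{1/2}$, the first term obeys $\Vert S(t)A^{1/2}\Vert_{L(X)}\le Ct^{-1/2}$ by Lemma \ref{lemma:analytic-semigroup}. For the second I factor $\pi_h A^{1/2} = A_h^{1/2}\,(A_h^{-1/2}\pi_h A^{1/2})$, bound $\Vert A_h^{-1/2}\pi_h A^{1/2}\Vert_{L(X)}\le C$ by \ref{cond:negative-rho}, and use $\Vert S_h(t)A_h^{1/2}\Vert_{L(X_h)}\le Ct^{-1/2}$, which follows from Lemma \ref{lemma:discrete-semigroup-smoothing} and Lemma \ref{lemma:interpolation-inequality} via $\Vert A_h^{1/2}S_h(t)\Vert_{L(X_h)}\le \Vert A_h S_h(t)\Vert_{L(X_h)}^{1/2}\Vert S_h(t)\Vert_{L(X_h)}^{1/2}$.

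The second endpoint estimate is the main obstacle, and I would prove it by the Trotter--Kato/Duhamel argument of Theorem \ref{theorem:1}, now using condition \ref{cond:abstract-solution-operator-3} — i.e. $\Vert\Delta_h A^{1/2}\Vert_{L(X)}\le Ch^{r/2}$ with $\Delta_h := A^{-1}-A_h^{-1}\pi_h$ — in place of \ref{cond:abstract-solution-operator}. I decompose $B_{t,h}A^{1/2} = (I)A^{1/2} + (II)A^{1/2}$ as in Theorem \ref{theorem:1}. For $(I)=(I-\pi_h)S(t)$, the identity $(I-\pi_h)A^{-1} = (I-\pi_h)\Delta_h$ gives $(I)A^{1/2} = (I-\pi_h)(\Delta_h A^{1/2})\,AS(t)$, bounded by $C t^{-1}h^{r/2}$. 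For $(II)=\pi_h S(t)-S_h(t)\pi_h$ I use its integral representation $(II) = \int_0^t S_h(t-s)A_h\pi_h\Delta_h A S(s)\,ds$, so that $(II)A^{1/2} = \int_0^t S_h(t-s)A_h\pi_h(\Delta_h A^{1/2})\,A S(s)\,ds$; I split this at $t/2$ and integrate by parts exactly as in Theorem \ref{theorem:1}, each time factoring the integrand through $\Delta_h A^{1/2}$ (e.g. $\Delta_h A^{3/2}S(s) = (\Delta_h A^{1/2})A S(s)$ and $\Delta_h A^{5/2}S(s)=(\Delta_h A^{1/2})A^{2}S(s)$). The resulting boundary and integral terms are controlled by $\Vert\Delta_h A^{1/2}\Vert_{L(X)}\le Ch^{r/2}$ together with the smoothing bounds $\Vert A_h S_h(\cdot)\pi_h\Vert$, $\Vert A_h^{2}S_h(\cdot)\pi_h\Vert$ from Lemma \ref{lemma:discrete-semigroup-smoothing} (the latter via the semigroup property) and $\Vert A^{k}S(\cdot)\Vert\le C(\cdot)^{-k}$ from Lemma \ref{lemma:analytic-semigroup}; the time singularities integrate to $Ct^{-1}h^{r/2}$ on each piece, precisely as the analogous terms in Theorem \ref{theorem:1} gave $Ct^{-1}h^{r}$. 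The only genuinely new bookkeeping is to always absorb the extra $A^{1/2}$ into $\Delta_h$ through \ref{cond:abstract-solution-operator-3}, rather than letting it act on $S$ or $S_h$ (which would cost an unintegrable power of time); the factorizations above are exactly what make this possible, and condition \ref{cond:discrete-growth} is available throughout to justify the fractional powers of $A_h$ and the manipulations on $X_h$.
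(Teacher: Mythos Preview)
Your proposal is correct and follows essentially the same route as the paper: reduce via an interpolation lemma (the paper records it as Lemma~\ref{lemma:interpolation-negative-rho}) to the two endpoint bounds $\Vert B_{t,h}(\lambda+A)^{1/2}\Vert\le Ct^{-1/2}$ and $\Vert B_{t,h}(\lambda+A)^{1/2}\Vert\le Ct^{-1}h^{r/2}$, obtain the first from \ref{cond:negative-rho} exactly as you describe, and obtain the second from the seven-term expansion already worked out in the proof of Theorem~\ref{theorem:1}. The only cosmetic difference is that the paper phrases everything as $\Vert B_{t,h}x\Vert\le\cdots\Vert A^{-1/2}x\Vert$ and, for a couple of the seven terms, factors through $A_h^{1/2}\pi_h$ (using \ref{cond:discrete-growth} together with \ref{cond:abstract-solution-operator}) rather than consistently through $\Delta_h A^{1/2}$ via \ref{cond:abstract-solution-operator-3} as you do; your factorization is arguably cleaner and otherwise the arguments coincide term by term.
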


To show this we need the following lemma. 
\begin{lemma}\label{lemma:interpolation-negative-rho}
    Let $B_{t,h} \in L(X)$ be a collection of bounded linear operators, satisfying, 
    \begin{align*}
    &\Vert B_{t,h} \Vert_{L(X)} \leq C, \quad &&\Vert B_{t,h} \Vert_{L(X)} \leq C t^{-1} h^r, \\
    &\Vert B_{t,h} (\lambda + A)^{1/2} \Vert_{L(X)} \leq C t^{-1/2}, \quad &&\Vert B_{t,h} (\lambda + A)^{1/2} \Vert_{L(X)} \leq C t^{-1} h^{r/2}, 
    \end{align*}
    for some $C > 0$. Then, the following estimate holds,
    \begin{align*}
        \Vert B_{t,h} x \Vert \leq C t^{-\theta / r + \rho / r} h^{\theta} \Vert (\lambda + A)^{\rho / r} x \Vert_X, \quad x \in D((\lambda + A)^{\rho / r}),
    \end{align*}
    for $\theta \in [0,r]$, and $\rho \in [-r/2,0] \cap [-r + \theta, 0]$.
\end{lemma}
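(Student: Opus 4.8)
The plan is to reduce the claim to a single operator-norm estimate and then obtain that estimate by interpolating the four hypotheses through Lemma \ref{lemma:interpolation-inequality}. As in the proof of Theorem \ref{theorem:1}, I would first assume without loss of generality that $\lambda = 0$. Since $\rho \leq 0$, the power $A^{\rho/r}$ is bounded, so the asserted inequality is equivalent to the operator bound
\[
    \Vert B_{t,h} A^{-\rho/r} \Vert_{L(X)} \leq C t^{-\theta/r + \rho/r} h^{\theta} :
\]
once this holds one writes $B_{t,h} x = B_{t,h} A^{-\rho/r} A^{\rho/r} x$ and uses $A^{-\rho/r} A^{\rho/r} = I$ to pass to $\Vert B_{t,h} x\Vert \leq \Vert B_{t,h}A^{-\rho/r}\Vert_{L(X)} \Vert A^{\rho/r} x\Vert_X$. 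The constraint $\rho \in [-r/2 + \theta/2, 0]$ forces $-\rho/r \in [0,1/2]$, which is exactly the range in which $A^{-\rho/r}$ can be reached by interpolating between $A^{0}$ and $A^{1/2}$, so this is the natural exponent to aim for.

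The two interpolation steps are as follows. First I would combine the two hypotheses on $\Vert B_{t,h}\Vert_{L(X)}$ by the elementary geometric-mean estimate (if $Y \leq P$ and $Y \leq Q$ then $Y \leq P^{\beta}Q^{1-\beta}$), to obtain
\[
    \Vert B_{t,h}\Vert_{L(X)} \leq C t^{-\theta_2/r} h^{\theta_2}, \qquad \theta_2 \in [0,r],
\]
(setting $\theta_2 = r(1-\beta)$). Next I would apply the second interpolation inequality of Lemma \ref{lemma:interpolation-inequality} with $\phi = 1/2$ and $\alpha = -2\rho/r \in [0,1]$ — which is legitimate since $B_{t,h} A^{1/2}\in L(X)$ by hypothesis — to get
\[
    \Vert B_{t,h} A^{-\rho/r}\Vert_{L(X)} \leq C \Vert B_{t,h} A^{1/2}\Vert_{L(X)}^{-2\rho/r}\, \Vert B_{t,h}\Vert_{L(X)}^{1 + 2\rho/r}.
\]

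Finally I would bound the first factor by the hypothesis $\Vert B_{t,h}A^{1/2}\Vert_{L(X)} \leq C t^{-1/2}$ and the second by the interpolated family above, and collect exponents. Writing $\theta := (1 + 2\rho/r)\theta_2$, the exponent of $h$ is $\theta$ while that of $t$ simplifies to $\rho/r - \theta/r$, yielding precisely $C t^{-\theta/r + \rho/r} h^{\theta}$. As $\theta_2$ ranges over $[0,r]$ and since $1 + 2\rho/r \geq 0$, the number $\theta$ ranges over $[0, r + 2\rho]$, which is exactly the stated constraint $\rho \in [-r/2 + \theta/2, 0]$ (and $\theta \leq r$ is automatic from $\rho \leq 0$). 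The one delicate point — and the main obstacle — is the bookkeeping: one must choose \emph{which} of the four hypotheses feeds \emph{which} factor so that the achievable exponent region coincides with the advertised $(\theta,\rho)$-region rather than a larger or smaller one. The combination above, the plain $t^{-1/2}$ bound on $\Vert B_{t,h}A^{1/2}\Vert$ paired with the full interpolated family for $\Vert B_{t,h}\Vert$, reproduces it exactly; notably the fourth hypothesis $\Vert B_{t,h}(\lambda+A)^{1/2}\Vert_{L(X)} \leq C t^{-1} h^{r/2}$ is not needed for the stated range and would only serve to enlarge the admissible region.
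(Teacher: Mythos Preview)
Your argument is correct and follows the same overall strategy as the paper: interpolate the two bounds on $\Vert B_{t,h}\Vert_{L(X)}$ by the geometric-mean trick, then apply the second inequality of Lemma~\ref{lemma:interpolation-inequality} with $\phi=1/2$ and $\alpha=-2\rho/r$ to pass to $\Vert B_{t,h}A^{-\rho/r}\Vert_{L(X)}$, and finally collect exponents via the substitution $\theta=(1+2\rho/r)\theta_2$.

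The one genuine difference is your observation about the fourth hypothesis. The paper's proof first interpolates the two bounds on $\Vert B_{t,h}A^{1/2}\Vert_{L(X)}$ with a free parameter $\gamma\in[0,1]$, obtaining $\Vert B_{t,h}A^{1/2}\Vert_{L(X)}\leq C\,t^{-1/2-\gamma/2}h^{r\gamma/2}$, and then carries both $\gamma$ and the parameter $\phi$ from the $\Vert B_{t,h}\Vert$-interpolation through the final step; this yields $\theta=r\phi(1+2\rho/r)-\gamma\rho$. You instead fix $\gamma=0$, i.e.\ use only the bound $\Vert B_{t,h}A^{1/2}\Vert_{L(X)}\leq Ct^{-1/2}$, and still recover the full stated range $\theta\in[0,r+2\rho]$. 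Your remark that the fourth hypothesis is redundant for the claimed $(\theta,\rho)$-region is correct: with $\gamma>0$ one could in fact reach $\theta$ up to $r+\rho>r+2\rho$, but the lemma as stated does not ask for that. So your route is a clean simplification that loses nothing relative to the assertion being proved.
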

\begin{proof}
    We have,
    \begin{align*}
        \Vert B_{t,h} (\lambda + A)^{\alpha} \Vert_{L(X)} \leq C t^{-\alpha}, \quad \Vert B_{t,h} (\lambda + A)^{\alpha} \Vert_{L(X)} \leq C t^{-1} h^{(1-\alpha) r},
    \end{align*}
    for $\alpha \in \{0, 1/2\}$, and by Lemma \ref{lemma:interpolation-inequality} it holds for $\alpha \in [0,1/2]$. This gives, 
    \begin{align*}
        \Vert B_{t,h} (\lambda + A)^{\alpha} \Vert_{L(X)} \leq (C t^{-\alpha})^{1-\gamma} (t^{-1} h^{(1-\alpha) r})^{\gamma},
    \end{align*}
    for some $\gamma \in [0,1]$. The inequality follows by setting $\theta = (1-\alpha) \gamma r$, and $\rho = -\alpha r$. 
\end{proof}

\begin{proof}[Proof of Lemma \ref{lemma:negative-rho}]
The proof is similar to that of Theorem \ref{theorem:1}, Lemma \ref{lemma:1} and \ref{lemma:2}. We can by the same arguments as those in the beginning of the proof of Theorem \ref{theorem:1} assume $\lambda = 0$. Owing to Lemma \ref{lemma:interpolation-negative-rho} and the proof of Theorem \ref{theorem:1}, we only need to show the inequalities,
\begin{align*}
    \Vert (S(t) - S_h(t) \pi_h) x \Vert_X &\leq C e^{-\epsilon t} t^{-1/2} \Vert A^{-1/2} x \Vert_X, \\ \Vert (S(t) - S_h(t) \pi_h) x \Vert_X &\leq C e^{-\epsilon t} t^{-1} h^{r/2} \Vert A^{-1/2} x \Vert_X.
\end{align*}
Due to condition \ref{cond:negative-rho}, we have using Lemma \ref{lemma:analytic-semigroup} and \ref{lemma:discrete-semigroup-smoothing},
\begin{align*}
    \Vert (S(t) - S_h(t) \pi_h) x \Vert_X &= \Vert (S(t) - S_h(t) \pi_h) A^{1/2} A^{-1/2} x \Vert_X \\
    &\leq \bigg( \Vert A^{1/2} S(t) \Vert_{L(X)} + \Vert S_h(t) A_h^{1/2} A_h^{-1/2} \pi_h A^{1/2} \Vert_{L(X)} \bigg) \Vert A^{-1/2} x \Vert_X \\
    &\leq C e^{-\epsilon t} t^{-1/2} \Vert A^{-1/2} x \Vert_X. 
\end{align*}

It remains to only show the latter inequality. Also here we define $\Delta_h := A^{-1} - A_h^{-1} \pi_h$. Note that from the proof of Theorem \ref{theorem:1}, we have,
\begin{align*}
    S(t) x - S_h(t) \pi_h x &= (I - \pi_h) S(t) x \\
    &\qquad - S_h(t/2) A_h \pi_h \Delta_h S(t/2) x + S_h(t) A_h \pi_h \Delta_h x + \int_0^{t/2} A_h^2 S_h(t-s) \pi_h \Delta_h S(s) x \, ds \\
    &\qquad + \pi_h \Delta_h A S(t) x - S_h(t/2) \pi_h \Delta_h A S(t/2) x + \int_{t / 2}^t S_h(t-s) \pi_h \Delta_h A^2 S(s) x \, ds.
\end{align*}
For the first term, note that the identity $(I - \pi_h) A^{-1} = \Delta_h - \Delta_h \pi_h$, condition \ref{cond:abstract-solution-operator} and the boundedness of $\pi_h$ implies $\Vert (I - \pi_h) A^{-\alpha} \Vert_{L(X)} \leq C h^{r\alpha}$ for $\alpha \in \{ 0,1\}$, and so by Lemma \ref{lemma:interpolation-inequality} it also holds for $\alpha = 1/2$. Therefore, using Lemma \ref{lemma:analytic-semigroup},
\begin{align*}
    \Vert (I - \pi_h ) S(t) x \Vert_X &= \Vert (I - \pi_h) A^{-1/2} A S(t) A^{-1/2} x \Vert_X \\
    &\leq \Vert (I - \pi_h) A^{-1/2} \Vert_{L(X)} \Vert A S(t) \Vert_{L(X)} \Vert A^{-1/2} x \Vert_X \\
    &\leq C e^{-\epsilon t} t^{-1} h^{r/2} \Vert A^{-1/2} x \Vert_X.
\end{align*}
For the second term, we have
\begin{align*}
    \Vert S_h(t/2) A_h \pi_h \Delta_h S(t/2) x \Vert_X &\leq \Vert A_h^{1/2} S_h(t / 2) \pi_h \Vert_{L(X)} \Vert A_h^{1/2} \pi_h \Vert_{L(X)} \Vert \Delta_h \Vert_{L(X)} \Vert A^{1/2} S(t/2) \Vert_{L(X)} \Vert A^{-1/2} x \Vert_X \\
    &\leq C e^{-\epsilon t} t^{-1/2} h^{-r/2} h^r t^{-1/2} \Vert A^{-1/2} x \Vert_X \\
    &\leq C e^{-\epsilon t} t^{-1} h^{r/2} \Vert A^{-1/2} x \Vert_X,
\end{align*}
where we used condition \ref{cond:discrete-growth} combined with Lemma \ref{lemma:interpolation-inequality} to get $\Vert A_h^{1/2} \pi_h \Vert_{L(X)} \leq C h^{-r/2}$. Moving on to the third term, we find
\begin{align*}
    \Vert S_h(t) A_h \pi_h \Delta_h x \Vert_X &\leq \Vert A_h S_h(t) \pi_h \Vert_{L(X)} \Vert \Delta_h A^{1/2} \Vert_{L(X)} \Vert A^{-1/2} x \Vert_X \\
    &\leq C e^{-\epsilon t} t^{-1} h^{r/2} \Vert A^{-1/2} x \Vert_X,
\end{align*}
where we used Lemma \ref{lemma:discrete-semigroup-smoothing} and condition \ref{cond:abstract-solution-operator-3}. For the fourth term, we have using Lemma \ref{lemma:discrete-semigroup-smoothing}, condition \ref{cond:abstract-solution-operator} and \ref{cond:discrete-growth},
\begin{align*}
    \Vert \int_0^{t/2} A_h^2 S_h(t-s) \pi_h \Delta_h S(s) x \, ds \Vert_X &\leq \int_0^{t/2} \Vert A_h^{3/2} S_h(t-s) \pi_h \Vert_{L(X)} \Vert A_h^{1/2} \Delta_h \Vert_{L(X)} \\
    &\qquad \times \Vert A^{1/2} S(s) \Vert_{L(X)} \Vert A^{-1/2} x \Vert_X \, ds \\
    &\leq C e^{-\epsilon t} \int_0^{t/2} (t-s)^{-3/2} s^{-1/2} h^{r/2} \Vert A^{-1/2} x \Vert_X \, ds \\
    &\leq C e^{-\epsilon t} t^{-1} h^{r/2} \Vert A^{-1/2} x \Vert_X. 
\end{align*}
Moving on to the fifth term, we have by condition \ref{cond:abstract-solution-operator-3} and Lemma \ref{lemma:analytic-semigroup},
\begin{align*}
    \Vert \pi_h \Delta_h A S(t) x \Vert_X &\leq \Vert \Delta_h A^{1/2} \Vert_{L(X)} \Vert A S(t) \Vert_{L(X)} \Vert A^{-1/2} x \Vert_X \\
    &\leq C e^{-\epsilon t} t^{-1} h^{r/2} \Vert A^{-1/2} x \Vert_X.
\end{align*}
Continuing on, we have by the same arguments in addition to Lemma \ref{lemma:discrete-semigroup-smoothing},
\begin{align*}
    \Vert S_h(t/2) \pi_h \Delta_h A S(t/2) x \Vert_X &\leq \Vert S_h(t/2) \pi_h \Vert_{L(X)} \Vert \Delta_h A^{1/2} \Vert_{L(X)} \Vert A S(t) \Vert_{L(X)} \Vert A^{-1/2} x \Vert_X \\
    &\leq C e^{-\epsilon t} t^{-1} h^{r/2} \Vert A^{-1/2} x \Vert_X,
\end{align*}
and for the last term, we get by similar arguments,
\begin{align*}
    \Vert \int_{t/2}^t S_h(t-s) \pi_h \Delta_h A^2 S(s) x \, ds \Vert_X &\leq \int_{t/2}^t \Vert S_h(t-s) \pi_h \Vert_{L(X)} \Vert \Delta_h A^{1/2} \Vert_{L(X)} \Vert A^2 S(s) \Vert_{L(X)} \Vert A^{-1/2} x \Vert_X \, ds \\
    &\leq C e^{-\epsilon t} \int_{t/2}^t s^{-2} h^{r/2} \Vert A^{-1/2} x \Vert_X \, ds \\
    &\leq C e^{-\epsilon t} t^{-1} h^{r/2} \Vert A^{-1/2} x \Vert_X. 
\end{align*}
This finishes the proof. 
\end{proof}

If the conditions of Lemma \ref{lemma:negative-rho} are satisfied, we can get a similar fully discrete estimate, but for $\rho < 0$. To show this, we need the following lemma.

\begin{lemma}\label{lemma:fully-discrete-semigroup-approximation-3}
    Let the conditions of Lemma \ref{lemma:negative-rho} hold. Then, there is $C, c, \epsilon > 0$ such that
    \begin{align*}
        \Vert (S_h(t_n) - S_{h, \Delta t}(t_n) )\pi_h x \Vert_X \leq C e^{c(\lambda-\epsilon) t_n} t_n^{-\theta / r + \rho/r } \Delta t^{\theta / 2} \Vert (\lambda + A)^{\rho/r} x \Vert_X,
    \end{align*}
    for $\theta \in [0,r]$ and $\rho \in [-r/2, 0] \cap [-r + \theta,0]$. 
\end{lemma}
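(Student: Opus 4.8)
The plan is to mirror the proof of Lemma \ref{lemma:fully-discrete-semigroup-approximation-1}, but to carry along a $(\lambda+A)^{1/2}$ weight so as to reach negative values of $\rho$ — exactly the way Lemma \ref{lemma:interpolation-negative-rho} upgrades Lemma \ref{lemma:interpolation} in the semidiscrete case. Write $B_{\Delta t} := (S_h(t_n) - S_{h,\Delta t}(t_n))\pi_h$. Following the interpolation computation in the proof of Lemma \ref{lemma:interpolation-negative-rho}, but with $h^{r}$ replaced throughout by $\Delta t$ (and hence $h^{r/2}$ by $\Delta t^{1/2}$), it suffices to establish the four bounds
\begin{align*}
\Vert B_{\Delta t}\Vert_{L(X)} \leq C, \quad \Vert B_{\Delta t}\Vert_{L(X)} \leq C t_n^{-1}\Delta t, \quad \Vert B_{\Delta t}(\lambda+A)^{1/2}\Vert_{L(X)} \leq C t_n^{-1/2}, \quad \Vert B_{\Delta t}(\lambda+A)^{1/2}\Vert_{L(X)} \leq C t_n^{-1}\Delta t^{1/2},
\end{align*}
and then take geometric means of the appropriate pairs via Lemma \ref{lemma:interpolation-inequality}. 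These four are precisely the hypotheses of Lemma \ref{lemma:interpolation-negative-rho} under that substitution.

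The first two bounds are already in hand: the first from Lemma \ref{lemma:fully-discrete-inequalities-1} together with Lemma \ref{lemma:discrete-semigroup-smoothing}, and the second from Lemma \ref{lemma:fully-discrete-inequalities-3}. The new work is the two bounds carrying the factor $(\lambda+A)^{1/2}$, and here I expect the only genuinely delicate point: $(\lambda+A)^{1/2}$ acts on $X$ whereas $S_h - S_{h,\Delta t}$ lives on $X_h$, so the weight must first be transported across $\pi_h$ onto the discrete operator. This is exactly what condition \ref{cond:negative-rho} is for; writing
\begin{align*}
B_{\Delta t}(\lambda+A)^{1/2} = (S_h(t_n)-S_{h,\Delta t}(t_n))(\lambda+A_h)^{1/2}\,\big[(\lambda+A_h)^{-1/2}\pi_h(\lambda+A)^{1/2}\big],
\end{align*}
the bracketed factor is bounded on $X$ by condition \ref{cond:negative-rho}, so it remains only to bound $\Vert (S_h(t_n)-S_{h,\Delta t}(t_n))(\lambda+A_h)^{1/2}\Vert_{L(X_h)}$ by $C t_n^{-1/2}$ and by $C t_n^{-1}\Delta t^{1/2}$.

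For these I would reuse the core estimate established inside the proof of Lemma \ref{lemma:fully-discrete-inequalities-4}, namely $\Vert (S_h(t_n)-S_{h,\Delta t}(t_n))(\lambda+A_h)\Vert_{L(X_h)} \leq C t_n^{-1}$. Applying the operator interpolation inequality of Lemma \ref{lemma:interpolation-inequality} to $A_h$ (with $\phi=1$, $\alpha=1/2$), I would interpolate this against the boundedness $\Vert S_h(t_n)-S_{h,\Delta t}(t_n)\Vert_{L(X_h)}\leq C$ to obtain $\Vert (S_h(t_n)-S_{h,\Delta t}(t_n))(\lambda+A_h)^{1/2}\Vert_{L(X_h)}\leq C t_n^{-1/2}$, and interpolate it instead against the bound $\Vert S_h(t_n)-S_{h,\Delta t}(t_n)\Vert_{L(X_h)}\leq C t_n^{-1}\Delta t$ from Lemma \ref{lemma:fully-discrete-inequalities-3} to obtain $\leq C t_n^{-1}\Delta t^{1/2}$. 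Combined with the bracket bound above, these give the two weighted inequalities.

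Finally, feeding all four bounds into the interpolation scheme of Lemma \ref{lemma:interpolation-negative-rho} (again with $h^{r}\mapsto\Delta t$) yields the stated estimate: the unweighted pair governs the power of $t_n^{-1}\Delta t$, while the weighted pair supplies the fractional power $(\lambda+A)^{\rho/r}$ on the right with $\rho\in[-r/2+\theta/2,0]$. Everything beyond the transport step is the same operator interpolation already used in Lemmas \ref{lemma:fully-discrete-semigroup-approximation-1} and \ref{lemma:interpolation-negative-rho}, so the hard part is really just recognizing that condition \ref{cond:negative-rho}, the fully discrete core bound of Lemma \ref{lemma:fully-discrete-inequalities-4}, and Lemma \ref{lemma:fully-discrete-inequalities-3} combine to furnish exactly the hypotheses of the negative-$\rho$ interpolation lemma.
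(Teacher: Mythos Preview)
Your proposal is correct and follows essentially the same route as the paper: transport the weight from $(\lambda+A)^{1/2}$ to $(\lambda+A_h)^{1/2}$ via condition \ref{cond:negative-rho}, invoke the discrete weighted bound underlying Lemma \ref{lemma:fully-discrete-inequalities-4}, and then interpolate against Lemmas \ref{lemma:fully-discrete-inequalities-1} and \ref{lemma:fully-discrete-inequalities-3} exactly as in the negative-$\rho$ scheme. The only difference is that the paper works with three endpoint bounds rather than four---it uses the single estimate $\Vert B_{\Delta t}(\lambda+A)^{1/2}\Vert_{L(X)}\le C t_n^{-1/2}$ (obtained directly from the statement of Lemma \ref{lemma:fully-discrete-inequalities-4} together with \ref{cond:negative-rho}) as the common $\alpha=1/2$ endpoint for both interpolated families, so your fourth bound $\Vert B_{\Delta t}(\lambda+A)^{1/2}\Vert_{L(X)}\le C t_n^{-1}\Delta t^{1/2}$ is superfluous, though harmless.
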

\begin{proof}
    From condition \ref{cond:negative-rho} we gather that
    \begin{align*}
        &\Vert (S_h(t_n) - S_{h,\Delta t}(t_n)) \pi_h (\lambda + A)^{1/2} \Vert_{L(X)} \\
        &\qquad \leq \Vert (S_h(t_n) - S_{h,\Delta t}(t_n)) (\lambda + A_h)^{1/2} \Vert_{L(X_h)} \Vert (\lambda + A_h)^{-1/2} \pi_h (\lambda + A)^{-1/2} \Vert_{L(X)} \\
        &\qquad \leq C \Vert (S_h(t_n) - S_{h,\Delta t}(t_n)) (\lambda + A_h)^{1/2} \Vert_{L(X_h)}.
    \end{align*}
    Thus, by Lemma \ref{lemma:fully-discrete-inequalities-1}, \ref{lemma:fully-discrete-inequalities-3}, \ref{lemma:fully-discrete-inequalities-4} and \ref{lemma:fully-discrete-inequalities-5}, combined with Lemma \ref{lemma:interpolation-inequality} we have for $\alpha \in [0,1/2]$,
    \begin{align*}
        \Vert (S_h(t_n) - S_{h, \Delta t}(t_n)) (\lambda + A)^{\alpha} \Vert_{L(X)} &\leq C e^{c(\lambda - \epsilon) t_n} t_n^{-\alpha}, \\
        \Vert (S_h(t_n) - S_{h, \Delta t}(t_n)) (\lambda + A)^{\alpha} \Vert_{L(X)} &\leq C e^{c(\lambda - \epsilon) t_n} t_n^{-1} \Delta t^{(1-\alpha)r},
    \end{align*}
    which gives for some $\gamma \in [0,1]$, 
    \begin{align*}
        \Vert (S_h(t_n) - S_{h, \Delta t}(t_n)) (\lambda + A)^{\alpha} \Vert_{L(X)} \leq C e^{c(\lambda - \epsilon) t_n} (t_n^{-\alpha})^{1-\gamma} (t_n^{-1} h^{(1-\alpha)})^{\gamma}.
    \end{align*}
    Setting $\theta = (1-\alpha) \gamma r$, and $\rho = -\alpha r$ gives the inequality. 
\end{proof}

The next theorem extends Theorem \ref{theorem:fully-discrete-semigroup-approximation-2} to the case of $\rho < 0$. 
\begin{theorem}\label{theorem:fully-discrete-semigroup-approximation-4}
    Suppose the conditions of Assumption \ref{assumption:abstract-model}, \ref{cond:discrete-growth}, \ref{cond:negative-rho} and \ref{cond:abstract-solution-operator-3} holds. Then, there is $C, c, \epsilon > 0$ such that 
    \begin{align*}
        \Vert (S(t) - S_{h,\Delta t}(t) \pi_h ) x \Vert_X \leq C e^{c(\lambda-\epsilon) t} t^{-\theta / r + \rho / r} (h^{\theta} + \Delta t^{\theta / 2}) \Vert (\lambda + A)^{\rho / r} x\Vert_X,
    \end{align*}
    for $\theta \in [0,r]$ and $\rho \in [-r/2, 0] \cap [-r + \theta, 0]$. 
\end{theorem}
\begin{proof}
    The proof is almost the same as that of Theorem \ref{theorem:fully-discrete-semigroup-approximation-2}, but now we have to use Lemma \ref{lemma:negative-rho} and Lemma \ref{lemma:fully-discrete-semigroup-approximation-3}: for $t \in (t_n, t_{n+1}]$, we decompose the error as follows, 
    \begin{align*}
        &S(t) - S(t_{n+1}) \\
        &\qquad + S(t_{n+1}) -  S_h(t_{n+1}) \pi_h \\
        &\qquad + S_h(t_{n+1}) \pi_h -  S_{h,\Delta t} (t_{n+1}) \pi_h \\
        &\qquad + S_{h,\Delta t} (t_{n+1}) \pi_h -  S_{h,\Delta t} (t) \pi_h.
    \end{align*} 
    For the first term, we have for some $\theta \in [0,r], \rho \in [-r/2, 0] \cap [-r + \theta, 0]$,
    \begin{align*}
        \Vert (S(t) - S(t_{n+1})) x \Vert_X &= \Vert (I - S(t_{n+1} - t) ) S(t) x \Vert_X \\
        &= \Vert (I - S(t_{n+1} - t)) (\lambda + A)^{-\theta / r} (\lambda + A)^{\theta / r - \rho / r}  S(t) (\lambda + A)^{\rho / r} x \Vert_X \\
        &\leq \Vert (I - S(t_{n+1} - t)) (\lambda + A)^{-\theta / r} \Vert_{L(X)} \Vert (\lambda + A)^{\theta / r - \rho / r}  S(t) \Vert_{L(X)} \Vert (\lambda + A)^{\rho / r} x \Vert_X.
    \end{align*}
    As in the proof of Theorem \ref{theorem:fully-discrete-semigroup-approximation-2}, we have by Lemma \ref{lemma:analytic-semigroup},
    \begin{align*}
        \Vert (I - S(t_{n+1} - t)) (\lambda + A)^{-\theta / r} \Vert_{L(X)} \leq C \Delta t^{\theta / r} \quad \text{and} \quad \Vert (\lambda + A)^{\theta / r - \rho / r} S(t) x \Vert_X \leq C e^{(\lambda-\epsilon) t} t^{-\theta / r + \rho / r} \Vert x \Vert_X. 
    \end{align*}
    Combining these inequalities finishes the first term. For the two next terms, we now use Lemma \ref{lemma:negative-rho} and Lemma \ref{lemma:fully-discrete-semigroup-approximation-3}. For the last term, we also here note that,
    \begin{align*}
        S_{h,\Delta t} (t_{n+1}) \pi_h - S_{h,\Delta t} (t) \pi_h = 0, \quad t \in (t_n,t_{n+1}],
    \end{align*}
    and arguing as in the proof of Theorem \ref{theorem:fully-discrete-semigroup-approximation-2} we may replace $t_{n+1}^{-\theta / r + \rho / r}$ with $t^{-\theta / r + \rho / r}$. This finishes the proof. 
\end{proof}
\section{Example: surface finite element approximation of a parabolic equation}\label{section:example}

To relate the abstract conditions of Assumption \ref{assumption:abstract-model} to differentiation operators, Lemma \ref{lemma:variational-semigroup} is key. To that end, let $V, H$ be Hilbert spaces with $V \subseteq H$, densely and continuously with inclusion $j : V \to H$, and let $a : V \times V \to \mathbb{C}$ be a coercive and continuous sesquilinear form on $V$. Recall that $a$ is continuous if for some $C > 0$,
\begin{align*}
    \vert a(u, v) \vert \leq C \Vert u \Vert_V \Vert v \Vert_V, \quad u, v \in V,
\end{align*}
and it is coercive if for some $c > 0$,
\begin{align*}
    \mathrm{Re}(a(u, u)) \geq c \Vert u \Vert_V^2, \quad u \in V.
\end{align*}
By Lax--Milgram theorem, the operator,
\begin{align*}
    A : V \to V^*, \quad A u = a(u, \cdot),
\end{align*}
is an isomorphism. Since $(\varphi, j \cdot)_H \in V^*$ for any $\varphi \in H$, we can also define (without changing notation), an operator $A : D(A) \to H$, $D(A) \subseteq V$ by identifying $a(u, \cdot)$ with $\varphi \in H$, in the case when,
\begin{align*}
    a(u, \cdot) = (\varphi, j \cdot)_H, \quad u \in D(A).
\end{align*}
The following lemma asserts that $A : D(A) \to H$ is sectorial.
\begin{lemma}\label{lemma:variational-semigroup}
    Let $A : D(A) \to H, \ D(A) \subseteq V$ be defined by $(A u, j v)_H = a(u, v)$ for any $v \in V$, where $a(\cdot, \cdot)$ is a coercive and continuous sesquilinear form on $V$. Set $M' := 1 + C / c$, where $C,c$ are the continuity and coercivity constants of $a(\cdot, \cdot)$, respectively. Then: \smallskip
    \begin{enumerate}
        \item for any $\delta \in (\pi/2-\sin^{-1}(M'^{-1}), \pi / 2)$, we have \begin{align*}
        \sigma(A) \subseteq \Sigma_{\delta} := \{ z \in \mathbb{C} \setminus \{ 0 \}, \ \vert \mathrm{arg}(z) \vert < \delta \},
        \end{align*}
        \item and for any $z \notin \Sigma_{\delta}$, we have, 
        \begin{align}
            \Vert (z - A)^{-1} x \Vert_H \leq \frac{M}{\vert z \vert} \Vert x \Vert_H, \quad \text{where} \quad M = \frac{M' \cos(\pi / 2 - \delta)}{1 - M' \sin(\pi / 2 - \delta)} \geq M'. 
        \end{align}
    \end{enumerate}
\end{lemma}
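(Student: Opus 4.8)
The plan is to prove the two assertions in turn: first locate the spectrum (invertibility of $z-A$ for $z\notin\Sigma_\delta$), then extract the resolvent bound, both starting from the two basic inequalities coming from continuity and coercivity of $a$. After complexifying $V$, $H$ and $a$ in the usual way, so that $a$ becomes a bounded sesquilinear form with $\mathrm{Re}\,a(u,u)\geq c\Vert u\Vert_V^2$ and $\vert a(u,u)\vert\leq C\Vert u\Vert_V^2$, the defining relation $(Au,jv)_H=a(u,v)$ gives for $u\in D(A)$ the numerical-range identity $(Au,ju)_H=a(u,u)$, whose real part is bounded below by coercivity and whose modulus is bounded above by continuity. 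These are the only two facts about $a$ that I would use.

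For the spectral inclusion I would fix $z\notin\Sigma_\delta$, so that $\vert\mathrm{arg}\,z\vert\in[\delta,\pi]$, and introduce the rotated form $b_z(u,v):=e^{i(\pi/2-\delta)}\big(a(u,v)-z(u,jv)_H\big)$ on $V\times V$. Computing $\mathrm{Re}\,b_z(u,u)$ splits it into the form contribution $\sin\delta\,\mathrm{Re}\,a(u,u)-\cos\delta\,\mathrm{Im}\,a(u,u)\geq(c\sin\delta-C\cos\delta)\Vert u\Vert_V^2$ and the contribution $\vert z\vert\sin(\mathrm{arg}\,z-\delta)\Vert u\Vert_H^2\geq 0$ (the latter nonnegative because $\mathrm{arg}\,z-\delta\in[0,\pi]$). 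Hence $b_z$ is bounded and $V$-coercive as soon as $\tan\delta>C/c$, which is implied by the stated lower bound on $\delta$. Lax–Milgram then yields, for each $f\in H$, a unique $u\in V$ with $a(u,v)-z(u,jv)_H=(f,jv)_H$ for all $v\in V$; since this reads $a(u,\cdot)=(zu+f,j\cdot)_H$ with $zu+f\in H$, we get $u\in D(A)$ and $(z-A)u=f$, so $z\in\rho(A)$ and $\sigma(A)\subseteq\Sigma_\delta$.

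For the resolvent bound I would test the identity $(z-A)u=f$ against $u$, giving $z\Vert u\Vert_H^2-a(u,u)=(f,ju)_H$. Taking the real part and using coercivity gives $c\Vert u\Vert_V^2\leq\mathrm{Re}(z)\Vert u\Vert_H^2+\Vert f\Vert_H\Vert u\Vert_H$, while taking the modulus and using continuity together with the reverse triangle inequality gives $C\Vert u\Vert_V^2\geq\vert z\vert\Vert u\Vert_H^2-\Vert f\Vert_H\Vert u\Vert_H$. Eliminating $\Vert u\Vert_V^2$ between these and then inserting the sector bound $\mathrm{Re}(z)\leq\vert z\vert\cos\delta$, valid for $z\notin\Sigma_\delta$, leaves an inequality of the form $\vert z\vert\Vert u\Vert_H\,(\text{positive factor})\leq(\text{constant})\,\Vert f\Vert_H$; solving for $\Vert u\Vert_H$ produces $\Vert(z-A)^{-1}f\Vert_H\leq(M/\vert z\vert)\Vert f\Vert_H$. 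The admissible range of $\delta$ is exactly what keeps that factor positive, namely $M'\sin(\pi/2-\delta)=M'\cos\delta<1$, i.e. $\delta>\pi/2-\sin^{-1}(M'^{-1})$; tracking the $c,C,\delta$-dependence through the elimination is what yields the stated value of $M$.

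I expect the main obstacle to be precisely this constant bookkeeping: the elimination must be arranged so that the $V\hookrightarrow H$ embedding constant cancels (the final $M$ depends only on $c,C,\delta$ through $M'$), and so that the estimate is uniform over the whole range $\vert\mathrm{arg}\,z\vert\in[\delta,\pi]$, including arguments past $\pi/2$ where $\mathrm{Re}(z)<0$ and the inequality $\mathrm{Re}(z)\leq\vert z\vert\cos\delta$ is used only as an upper bound. A secondary delicate point is the passage, in the spectral step, from the variational solution $u\in V$ to its membership in $D(A)$ with the $H$-realization of $a(u,\cdot)$, which is where the density and continuity of $j:V\to H$ are needed.
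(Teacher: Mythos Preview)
The paper does not give a proof of this lemma at all: it simply cites Chapter~2 of Yagi, specifically Theorem~2.1 and Equations~2.4--2.5 there. Your proposal is therefore not being compared against an argument in the paper but against the standard variational-operator machinery that reference contains, and your outline is a faithful reconstruction of that machinery: complexify, show $z-A$ is invertible via Lax--Milgram applied to a rotated sesquilinear form, then test $(z-A)u=f$ against $u$ and combine the coercivity and continuity inequalities to extract the resolvent bound.

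Two small points are worth flagging. First, your rotation $b_z=e^{i(\pi/2-\delta)}(\,\cdot\,)$ and the claim $\sin(\arg z-\delta)\geq 0$ only cover $\arg z\in[\delta,\pi]$; for $\arg z\in[-\pi,-\delta]$ you need the conjugate rotation $e^{-i(\pi/2-\delta)}$ (equivalently, complex-conjugate the whole identity). This is routine but should be said. Second, the elimination you sketch yields a resolvent constant of the form $M'/(1-(C/c)\cos\delta)$, which is a valid bound but not literally the stated $M=M'\sin\delta/(1-M'\cos\delta)$; the exact constant in the lemma comes from the numerical-range formulation (the inequality $|(Au,u)_H|\leq (C/c)\,\mathrm{Re}(Au,u)_H$ places the numerical range in a sector, and $M$ is the reciprocal of the angular distance from $z$ to that sector). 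If you want to reproduce the precise $M$ rather than just some admissible constant, organize the second step that way instead of by direct elimination.
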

\begin{proof}
    See Chapter 2 in \cite{yagi}, and in particular Theorem 2.1, Equation 2.4 and 2.5.
\end{proof}

With Lemma \ref{lemma:variational-semigroup} at hand, we are ready to connect the results of the previous section to results related to the numerical approximation of parabolic partial differential equations. We consider the model problem, 
\begin{align*}
    \dot{u} = -L u, \quad u(0) = u_0 \in L^2(\Gamma),
\end{align*}
where (we refer to Appendix \ref{app:hypersurfaces} for the appropriate definitions):
\begin{enumerate}
    \item $\Gamma \subseteq \mathbb{R}^3$ is a compact (and therefore boundaryless) $2$-dimensional $C^3$-surface,
    \medskip
    \item $V$ is a closed subspace of $H^1(\Gamma)$, and $H$ is $L^2(\Gamma)$, 
    \medskip 
    \item $L := \mathcal{L} - I$, where $\mathcal{L}$ is defined as in Lemma \ref{lemma:variational-semigroup}, using the (coercive and continuous) sesquilinear form, 
    \begin{align*}
        a(u, v) := \int_{\Gamma} \nabla_{\Gamma} u \cdot \nabla_{\Gamma} \overline{v} + u \overline{v} \, d\sigma, 
    \end{align*}
    on $V$, where $\nabla_{\Gamma}$ are tangential derivatives on $\Gamma$, and $\sigma$ is surface measure on $\Gamma$, and $\overline{v}$ is the complex conjugate of $v$.
\end{enumerate}
By Lemma \ref{lemma:elliptic-regularity} we deduce that $D(\mathcal{L}) = H^2(\Gamma) \cap V$, and for $u \in H^2(\Gamma) \cap V$, $L u = \Delta_{\Gamma} u$ and $\mathcal{L} u = (I - \Delta_{\Gamma}) u$ for a.e. $x \in \Gamma$. The latter identity combined with Lemma \ref{lemma:elliptic-regularity}, gives for any $f \in L^2(\Gamma)$, 
\begin{align}\label{eq:example-elliptic-regularity-estimate}
    \Vert \mathcal{L}^{-1} f \Vert_{H^2(\Gamma)} \leq C \Vert f \Vert_{L^2(\Gamma)},
\end{align}
which is an elliptic regularity estimate for solutions, $u$, of
\begin{align*}
    \mathcal{L} u = f. 
\end{align*}
By Lemma \ref{lemma:variational-semigroup} it is clear that $-L$ generates an analytic semigroup on $L^2(\Gamma)$---in the setting of Assumption \ref{assumption:abstract-model}, \ref{cond:sectorial} we have $L = A$ and $\mathcal{L} = \lambda + A$, with $\lambda = 1$. 

In order to approximate $\mathcal{L}$ numerically, we use the surface finite element approximation proposed in \cite{1988-dziuk}. To that end, let $\Gamma_h := \bigcup_{\tau \in \mathcal{T}_h} \tau \subseteq \mathbb{R}^3$ be a $2$-dimensional surface, consisting of a collection, $\mathcal{T}_h$, of planar simplices, $\tau$, whose vertices lie on $\Gamma$, and where $h := \max_{\tau \in \mathcal{T}_h} \text{diam}(\tau)$. The collection $\mathcal{T}_h$ is regular in the sense that for some $C > 0$,
\begin{align*}
    C^{-1} h \leq 2 \rho(\tau) \leq \text{diam}(\tau) \leq 2 r(\tau) \leq C h, \quad \tau \in \mathcal{T}_h,
\end{align*}
where $\rho, r$ are the radii of the incircle and circumcircle, respectively. Let $V_h \subseteq C(\overline{\Gamma}_h)$, be the finite dimensional linear space of continuous functions that are affine linear when restricted to $\tau \in \mathcal{T}_h$. We denote the nodal basis of $V_h$ associated to the triangle vertices $\{ x_1, \dots, x_{N_h} \} \in \Gamma_h \cap \Gamma$, by $\varphi_n$, $n = 1, \dots, N_h$. Thus $\varphi_n$ are piece wise affine linear, with $\varphi_n(x_n) = 1$ and $\varphi_n(x_j) = 0$, $j \neq n$. 

To relate functions defined on $\Gamma_h$ to those defined on $\Gamma$, the closest point projection is key. To that end, let $\mathcal{N}_{\epsilon}$ be a tubular neighbourhood of $\Gamma$ (see Appendix \ref{app:hypersurfaces}). The closest point projection, $p : \mathcal{N}_{\epsilon} \to \Gamma$ is defined by,
\begin{align*}
    p(x) := x - d(x) \nabla d(x),
\end{align*}
where $d$ is the signed distance function of $\Gamma$. Since $\Gamma$ is a $C^3$-surface, we have $p \in C^2(\mathcal{N}_{\epsilon})$. For $h$ small enough, we have $\Gamma_h \subseteq \mathcal{N}_{\epsilon}$, and $p$ restricted to $\Gamma_h$ becomes a bijection $\Gamma_h \to \Gamma$. Therefore, for any function $f : \Gamma_h \to \mathbb{R}$, we can define the lift, 
\begin{align}\label{eq:iota_h}
    \iota_h f := f \circ p\vert_{\Gamma_h}^{-1} : \Gamma \to \mathbb{R}.
\end{align}

Our surface finite element approximation of $\mathcal{L}$, denoted $\mathcal{L}_h : V_h \to V_h$, is defined by requiring that,
\begin{align*}
    (\mathcal{L}_h u, v)_{L^2(\Gamma_h)} = a_h(u, v), \quad u, v \in V_h,
\end{align*}
where $a_h : V_h \times V_h \to \mathbb{R}$ is defined by,
\begin{align}\label{eq:approximate-bilinear-form}
    a_h(u, v) = \int_{\Gamma_h} \nabla_{\Gamma_h} u \cdot \nabla_{\Gamma_h} \overline{v} + u \overline{v} \, d\sigma_h.
\end{align}
Here, $\nabla_{\Gamma_h}$, $\sigma_h$ are tangential derivatives and surface measure on $\Gamma_h$, respectively---the tangential derivatives on the discrete surface are defined similarly as on the smooth surface, but only in an almost everywhere sense since the unit normal on the discrete surface is only defined almost everywhere. Similarly, we define,
\begin{align*}
    L_h := \mathcal{L}_h - I.
\end{align*}
Finally we denote by $\mathcal{P}_h : L^2(\Gamma_h) \to V_h$ the $L^2(\Gamma_h)$-orthogonal projection onto $V_h$. We are now ready to list some useful properties of $\Gamma_h$ and $\mathcal{L}_h$. 
\begin{lemma}\label{lemma:sfem}
    Let $\Gamma_h$ be as above. Then the quotient of surface area measure, $\delta_h : \Gamma_h \to [0,\infty)$, defined ($\sigma_h$-a.e.) by,
    \begin{align*}
         \int_{p(U)} d\sigma = \int_{U} \delta_h \, d \sigma_h, \quad U \subseteq \Gamma_h \text{ measurable},
    \end{align*}
    satisfies,
    \begin{align}\label{eq:quotient-h2}
        \Vert 1 - \delta_h \Vert_{L^{\infty}(\Gamma_h)} \leq C h^2,
    \end{align}
    \begin{align}\label{eq:derivative-h1}
        \Vert 1 - \iota_h \delta_h^{-1} \Vert_{L^{\infty}(\Gamma)} + \sum_{i=1}^{3} \Vert \underline{\partial}_i (1 - \iota_h \delta_h^{-1}) \Vert_{L^{\infty}(\Gamma)} \leq C h,
    \end{align}
    and 
    \begin{align}\label{eq:derivative-bounded}
        \Vert \iota_h \delta_h^{-1} \Vert_{L^{\infty}(\Gamma)} + \sum_{i=1}^{3} \Vert \underline{\partial}_i \iota_h \delta_h^{-1} \Vert_{L^{\infty}(\Gamma)} \leq C, 
    \end{align}
    for some $C > 0$ independent of $h$. Here $\underline{\partial}_i$ denotes the $i$'th component of the tangential derivative. 
\end{lemma}
\begin{proof}
    Without loss of generality, assume $x_3 = 0$ on a given $\tau \in \mathcal{T}_h$, and so surface measure on $\tau$ is given by $d \sigma_h = d x_1 d x_2$. For $x \in \tau$, $\delta_h$ is defined by,
    \begin{align}\label{eq:deltah-definition}
        \delta_h^2 := 
        \begin{vmatrix}
            \partial_{x_1} p_2 & \partial_{x_1} p_3 \\
            \partial_{x_2} p_2 & \partial_{x_2} p_3
        \end{vmatrix}^2
        +
        \begin{vmatrix}
            \partial_{x_1} p_1 & \partial_{x_1} p_3 \\
            \partial_{x_2} p_1 & \partial_{x_2} p_3
        \end{vmatrix}^2
        +
        \begin{vmatrix}
            \partial_{x_1} p_1 & \partial_{x_1} p_2 \\
            \partial_{x_2} p_1 & \partial_{x_2} p_2
        \end{vmatrix}^2.
    \end{align}
    The inequality \eqref{eq:quotient-h2} holds by Lemma 4.1 in \cite{2013-elliott}, and from the proof of that lemma we also gather that,
    \begin{align}\label{eq:d-estimates}
        \Vert d \Vert_{L^{\infty}(\tau)} \leq C h^2, \quad \Vert \nu_i \Vert_{L^{\infty}(\tau)} \leq C h, 
    \end{align}
    with $\nu_i = \partial_{x_i} d$, $i = 1, 2$, for some $C > 0$ that only depends on the $C^2(\mathcal{N}_{\epsilon})$-norm of $d$. For any $x \in \mathcal{N}_{\epsilon}$, $j = 1, 2$ and $i, k = 1, 2, 3$, we have
    \begin{align*}
        \partial_{x_j} p_i = \delta_{ij} - \nu_j \nu_i - d \nu_{ij}, \quad \partial_{x_k} \partial_{x_j} p_i = -\nu_{kj} \nu_i - \nu_j \nu_{ik} - \nu_k \nu_{ij} - d \nu_{ijk},
    \end{align*}
    where $\delta_{ij} = 1$ if $i = j$ and $0$ otherwise. Using \eqref{eq:d-estimates} and that $d \in C^3(\mathcal{N}_{\epsilon})$, we get
    \begin{align*}
        \Vert \partial_{x_j} p_i - \delta_{ij} \Vert_{L^{\infty}(\tau)} \leq C h^2, \quad \Vert \partial_{x_k} \partial_{x_j} p_i \Vert_{L^{\infty}(\tau)} \leq C h.
    \end{align*}
    Since $p$ is defined on all of $\mathcal{N}_{\epsilon}$, $\delta_h$ has a natural extension to a tubular neighbourhood of any $\tau \in \mathcal{T}_h$, given by \eqref{eq:deltah-definition}, and which we denote by $\Tilde{\delta}_h$. By \eqref{eq:quotient-h2} we have that $\delta_h \in [1/2, 3/2]$ for all $h$ small, so by the expression \eqref{eq:deltah-definition} for $\delta_h^2$, we must have that for $i = 1, 2, 3$, 
    \begin{align*}
        \Vert \partial_{x_i} \tilde{\delta}_h \Vert_{L^{\infty}(\tau)} = \Vert \frac{1}{2} \tilde{\delta}_h^{-1} \partial_{x_i} \tilde{\delta}_h^2 \Vert_{L^{\infty}(\tau)} \leq \frac{1}{2} \Vert \tilde{\delta}_h^{-1} \Vert_{L^{\infty}(\tau)} \Vert \partial_{x_i} \tilde{\delta}_h^2 \Vert_{L^{\infty}(\tau)} \leq C h,
    \end{align*}
    where the constant $C > 0$ depends only on the $C^3(\mathcal{N}_{\epsilon})$-norm of $d$. The inequality above implies that $\Vert \nabla \tilde{\delta}_h \Vert_{L^{\infty}(\tau)} \leq C h$, which in turn implies that $\Vert \nabla \tilde{\delta}_h \Vert_{L^{\infty}(\Gamma_h)} \leq C h$. From the proof of Lemma 3 in \cite{1988-dziuk} we gather that for any $\tilde{f}$ defined in a neighbourhood of $\tau$ and differentiable, there is $C > 0$ such that for $f = \tilde{f}\vert_{\tau}$,
    \begin{align*}
        \vert ( \nabla_{\Gamma} \iota_h f ) \circ p \vert \leq C \vert \nabla \tilde{f} \vert, \quad x \in \tau.
    \end{align*}
    This gives for $i = 1, 2, 3$,
    \begin{align*}
        \Vert \underline{\partial}_i (1 - \iota_h \delta_h^{-1}) \Vert_{L^{\infty}(\Gamma)} \leq C \Vert \nabla \tilde{\delta}_h^{-1} \Vert_{L^{\infty}(\Gamma_h)} \leq C \Vert \tilde{\delta}_h^{-2} \Vert_{L^{\infty}(\Gamma_h)} \Vert \nabla \tilde{\delta}_h \Vert_{L^{\infty}(\Gamma_h)} \leq C h,
    \end{align*}
    for all $h$ small. Note furthermore that by \eqref{eq:quotient-h2}, 
    \begin{align*}
        \Vert 1 - \iota_h \delta_h^{-1} \Vert_{L^{\infty}(\Gamma)} = \Vert \delta_h^{-1} (1 - \delta_h) \Vert_{L^{\infty}(\Gamma_h)} \leq \Vert \delta_h^{-1} \Vert_{L^{\infty}(\Gamma_h)} \Vert 1 - \delta_h \Vert_{L^{\infty}(\Gamma_h)} \leq C h^2.
    \end{align*}
    Combining these observations gives \eqref{eq:derivative-h1} and \eqref{eq:derivative-bounded}. 
\end{proof}

\begin{lemma}\label{lemma:example-bilinear-form-error}
    Let $a$ and $a_h$ be as above, and let $u, v \in \iota_h V_h$. Then the following estimates holds,
    \begin{align*}
        \vert a(u, v) - a_h(\iota_h^{-1} u, \iota_h^{-1} v) \vert \leq C h^2 \Vert u \Vert_{H^1(\Gamma)} \Vert v \Vert_{H^1(\Gamma)}.
    \end{align*}
    Further, for $u, v \in L^2(\Gamma)$,
    \begin{align*}
        \vert (u, v)_{L^2(\Gamma)} - (\iota_h^{-1} u, \iota_h^{-1} v)_{L^2(\Gamma_h)} \vert \leq C h^2 \Vert u \Vert_{L^2(\Gamma)} \Vert v \Vert_{L^2(\Gamma)}. 
    \end{align*}
\end{lemma}
\begin{proof}
    The first inequality of the lemma follows by Lemma 4.7 in \cite{2013-elliott}. The second inequality follows by \eqref{eq:quotient-h2}, noting that,
    \begin{align*}
        \vert (u, v)_{L^2(\Gamma)} - (\iota_h^{-1} u, \iota_h^{-1} v)_{L^2(\Gamma_h)} \vert = \vert (u, v (1 - \delta_h^{-1}))_{L^2(\Gamma)} \vert \leq \Vert u \Vert_{L^2(\Gamma)} \Vert v \Vert_{L^2(\Gamma)} \Vert 1 - \delta_h^{-1} \Vert_{L^{\infty}(\Gamma_h)}. 
    \end{align*}
\end{proof}

\begin{lemma}\label{lemma:H1-norm-equivalence}
    Let $\Gamma_h$ and $\mathcal{L}_h$ be as above. Then the norms,
    \begin{align*}
        \Vert \cdot \Vert_{L^2(\Gamma)}, \quad \Vert \iota_h^{-1} \cdot \Vert_{L^2(\Gamma_h)},
    \end{align*}
    are equivalent on $L^2(\Gamma)$ with constants independent of $h$, while the norms
    \begin{align*}
        \Vert \cdot \Vert_{H^1(\Gamma)}, \quad \Vert \iota_h^{-1} \cdot \Vert_{H^1(\Gamma_h)},
    \end{align*}
    with $\Vert u \Vert_{H^1(\Gamma_h)}^2 := \Vert u \Vert_{L^2(\Gamma_h)}^2 + \Vert \nabla_{\Gamma_h} u \Vert_{L^2(\Gamma_h)}^2$, are all equivalent on $H^1(\Gamma)$ with constants independent of $h$. 
\end{lemma}
\begin{proof}
    This lemma follows by Lemma 4.2 in \cite{2013-elliott}. 
\end{proof}

\begin{lemma}\label{lemma:example-discrete-growth}
    Let $\Gamma_h$ and $\mathcal{L}_h$ be as above. Then there is $C > 0$ independent of $h$, such that for any $u \in V_h$,
    \begin{align*}
        \Vert \mathcal{L}_h u \Vert_{L^2(\Gamma_h)} \leq C h^{-2} \Vert u \Vert_{L^2(\Gamma_h)}.
    \end{align*}
\end{lemma}
\begin{proof}
    This inequality holds since the mesh is regular. When the mesh is regular, the following inequality holds (see e.g. Theorem 3.2.6 in \cite{cia-fem}, and note that it does not matter that our simplices are embedded in $\mathbb{R}^3$ as opposed to $\mathbb{R}^2$),
    \begin{align}\label{eq:regular-mesh-inequality}
        \Vert u \Vert_{H^1(\Gamma_h)} \leq C h^{-1} \Vert u \Vert_{L^2(\Gamma_h)}, \quad \text{for any } u \in V_h. 
    \end{align}
    This in turn gives for any $u \in V_h$,
    \begin{align*}
        \Vert \mathcal{L}_h u \Vert_{L^2(\Gamma_h)} &= \sup_{\varphi \in V_h, \Vert \varphi \Vert_{L^2(\Gamma_h)} = 1,} (\mathcal{L}_h u, \varphi)_{L^2(\Gamma_h)} \\
        &= \sup_{\varphi \in V_h, \Vert \varphi \Vert_{L^2(\Gamma_h)}} a_h(u, \varphi) \\
        &\leq \Vert u \Vert_{H^1(\Gamma_h)} \sup_{\varphi \in V_h, \Vert \varphi \Vert_{L^2(\Gamma_h)}=1} \Vert \varphi \Vert_{H^1(\Gamma_h)} \\
        &\leq C h^{-2} \Vert u \Vert_{L^2(\Gamma_h)}.
    \end{align*}
\end{proof}

We are now ready to place the surface finite element approximation above into the framework of Assumption \ref{assumption:abstract-model}. To that end, let: \smallskip
\begin{enumerate}
    \item $X := H$. \medskip
    \item $X_h := \iota_h V_h$. \medskip
    \item $\pi_h := \iota_h \mathcal{P}_h \iota_h^{-1} : H \to \iota_h V_h$. \medskip
    \item $A := L : D(L) \to H$. \medskip
    \item $A_h := \iota_h L_h \iota_h^{-1} : \iota_h V_h \to \iota_h V_h$. \smallskip
\end{enumerate}
Note first that condition \ref{cond:sectorial} holds by Lemma \ref{lemma:variational-semigroup} for the operator $A$. The next lemma asserts that condition \ref{cond:discrete-sectorial} holds for the discrete operator, $A_h$. 
\begin{lemma}
    $A_h : X_h \to X_h$ defined as above satisfies condition \ref{cond:discrete-sectorial}. 
\end{lemma}
\begin{proof}
Condition \ref{cond:discrete-sectorial} holds by Lemma \ref{lemma:variational-semigroup} provided we can show that the continuity and coercivity constants of the sesquilinear form, 
\begin{align*}
    (\iota_h \mathcal{L}_h \iota_h^{-1} \cdot, \cdot)_H : \iota_h V_h \times \iota_h V_h \to \mathbb{R},
\end{align*}
on the lifted finite element space $\iota_h V_h$, equipped with the $H^1(\Gamma)$-norm, are independent of $h$.

We first verify that the continuity constant can be chosen independently of $h$. For any $u, v \in \iota_h V_h$, we have,
\begin{align*}
    (\iota_h \mathcal{L}_h \iota_h^{-1} u, v)_{L^2(\Gamma)} &= (\mathcal{L}_h \iota_h^{-1} u, \iota_h^{-1} v)_{L^2(\Gamma_h)} - (\mathcal{L}_h \iota_h^{-1} u, (1-\delta_h) \iota_h^{-1} v)_{L^2(\Gamma_h)} \\
    &= a_h(\iota_h^{-1} u, \iota_h^{-1} v) - (\mathcal{L}_h \iota_h^{-1} u, (1-\delta_h) \iota_h^{-1} v)_{L^2(\Gamma_h)}.
\end{align*}
For the first term, we have,
\begin{align*}
    \vert a_h(\iota_h^{-1} u, \iota_h^{-1} v) \vert \leq C \Vert u \Vert_{H^1(\Gamma)} \Vert v \Vert_{H^1(\Gamma)},
\end{align*}
by using Cauchy--Schwarz, and Lemma \ref{lemma:H1-norm-equivalence}. For the second term, note first that owing to \eqref{eq:quotient-h2} and \eqref{eq:regular-mesh-inequality} we must have $\Vert \mathcal{L}_h^{1/2} f \Vert_{L^2(\Gamma_h)} \leq C h^{-1} \Vert f \Vert_{L^2(\Gamma_h)}$ for $f \in V_h$. Then by Cauchy--Schwarz,
\begin{align*}
    \vert (\mathcal{L}_h \iota_h^{-1} u, (1-\delta_h)\iota_h^{-1} v)_{L^2(\Gamma)} \vert &\leq \Vert \mathcal{L}_h^{1/2} \Vert_{L(V_h)} \Vert \mathcal{L}_h^{1/2} \iota_h^{-1} u \Vert_{L^2(\Gamma_h)} \Vert (1 - \delta_h) \iota_h^{-1} v \Vert_{L^2(\Gamma_h)} \\
    &\leq C h^{-1} \Vert u \Vert_{H^1(\Gamma)} \Vert 1 - \delta_h \Vert_{L^{\infty}(\Gamma_h)} \Vert v \Vert_{L^2(\Gamma)} \\
    &\leq C h \Vert u \Vert_{H^1(\Gamma)} \Vert v \Vert_{H^1(\Gamma)},
\end{align*}
where we used Lemma \ref{lemma:sfem}. Combining the two terms, we see that,
\begin{align*}
    (\iota_h \mathcal{L}_h \iota_h^{-1} u, \iota_h^{-1} u)_{L^2(\Gamma)} \leq C \Vert u \Vert_{H^1(\Gamma)} \Vert v \Vert_{H^1(\Gamma)},
\end{align*}
for some $C > 0$ independent of $h$. This shows that the continuity constant of $((I + A_h)\cdot, \cdot)_{L^2(\Gamma)}$ does not depend on $h$.

It remains to show that the coercivity constant can be chosen independently of $h$. Also here we have for $u \in \iota_h V_h$,
\begin{align*}
    (\iota_h \mathcal{L}_h \iota_h^{-1} u, u)_{L^2(\Gamma)} = a_h(\iota_h^{-1} u, \iota_h^{-1} u) - (\mathcal{L}_h \iota_h^{-1} u, (1-\delta_h) \iota_h^{-1} u)_{L^2(\Gamma_h)},
\end{align*}
and for the first term, we get,
\begin{align*}
    \mathrm{Re}( a_h(\iota_h^{-1} u, \iota_h^{-1} u) ) = \Vert \iota_h^{-1} u \Vert_{H^1(\Gamma_h)}^2 \geq c \Vert u \Vert_{H^1(\Gamma)}^2,   
\end{align*}
by Lemma \ref{lemma:H1-norm-equivalence}. For the second term, we have by the same arguments as in the case of the continuity constant,
\begin{align*}
    \vert (\mathcal{L}_h \iota_h^{-1} u, (1-\delta_h)\iota_h^{-1} u)_{L^2(\Gamma)} \vert &\leq C h \Vert u \Vert_{H^1(\Gamma)}^2,
\end{align*}
so that combining the two terms, yields, 
\begin{align*}
    \mathrm{Re}( (\iota_h \mathcal{L}_h \iota_h^{-1} u, \iota_h^{-1} u)_{L^2(\Gamma)} ) \geq c \Vert u \Vert_{H^1(\Gamma)}^2 - C h \Vert u \Vert_{H^1(\Gamma)}^2.
\end{align*}
It follows that $((I + A_h) \cdot, \cdot)_{L^2(\Gamma)}$ is coercive with constant that does not depend on $h$ for all $h$ small. 
\end{proof}

The next lemma (and a density argument) asserts that condition \ref{cond:negative-rho} holds for $A, A_h$ and $\pi_h$ as above. 
\begin{lemma}\label{lemma:example-negative-rho-condition}
    Let $A, A_h$ and $\pi_h$ be as above. Then
    \begin{align*}
        \Vert (I + A_h)^{-1/2} \pi_h (I + A)^{1/2} u \Vert_{L^2(\Gamma)} \leq C \Vert u \Vert_{L^2(\Gamma)},
    \end{align*}
    for any $u \in V$, and so condition \ref{cond:negative-rho} holds for this choice of $A$, $A_h$ and $\pi_h$.
\end{lemma}
\begin{proof}
    Since for any $u \in \iota_h V_h$, we have
    \begin{align*}
        \Vert u \Vert_H = \sup_{\varphi \in \iota_h V_h, \Vert \varphi \Vert_{L^2(\Gamma)} = 1} (u, \varphi)_{L^2(\Gamma)},
    \end{align*}
    it suffices to show that,
    \begin{align*}
        ((I + A_h)^{-1/2} \pi_h (I + A)^{1/2} u, \varphi)_H \leq C \Vert u \Vert_H,
    \end{align*}
    for some $C > 0$, with $u \in V$ and $\varphi \in \iota_h V_h$ with $\Vert \varphi \Vert_H = 1$. Noting that for any $\alpha \in \mathbb{R}$, $(I + A_h)^{\alpha} = \iota_h \mathcal{L}_h^{\alpha} \iota_h^{-1}$, $\mathcal{L}_h^{\alpha}$ and $\mathcal{L}^{\alpha}$ are selfadjoint (with the $L^2(\Gamma_h)$-inner product on $V_h$, and on $L^2(\Gamma)$, respectively), we get
    \begin{align*}
        ((I + A_h)^{-1/2} \pi_h (I + A)^{1/2} u, \varphi)_H &= \int_{\Gamma} ( \iota_h \mathcal{L}_h^{-1/2} \mathcal{P}_h \iota_h^{-1} \mathcal{L}^{1/2} u ) \overline{\varphi} \, d \sigma \\
        &= \int_{\Gamma_h} (\mathcal{L}_h^{-1/2} \mathcal{P}_h \iota_h^{-1} \mathcal{L}^{1/2} u) (\iota_h^{-1} \overline{\varphi}) \delta_h \, d\sigma_h \\
        &= \int_{\Gamma_h} (\iota_h^{-1} \mathcal{L}^{1/2} u) (\mathcal{L}_h^{-1/2} \mathcal{P}_h (\iota_h^{-1} \overline{\varphi}) \delta_h) \, d\sigma_h \\
        &= \int_{\Gamma} (\mathcal{L}^{1/2} u) (\iota_h \mathcal{L}_h^{-1/2} \mathcal{P}_h (\iota_h^{-1} \overline{\varphi}) \delta_h) (\iota_h \delta_h^{-1}) \, d \sigma \\
        &= \int_{\Gamma} u (\mathcal{L}^{1/2} (\iota_h \mathcal{L}_h^{-1/2} \mathcal{P}_h (\iota_h^{-1} \overline{\varphi}) \delta_h) (\iota_h \delta_h^{-1})) \, d\sigma \\
        &\leq \Vert u \Vert_{L^2(\Gamma)} \Vert \mathcal{L}^{1/2} (\iota_h \mathcal{L}_h^{-1/2} \mathcal{P}_h (\iota_h^{-1} \varphi) \delta_h) (\iota_h \delta_h^{-1}) \Vert_{L^2(\Gamma)}.
    \end{align*}
    We have, using the inequality $\Vert f g \Vert_{H^1(\Gamma)} \leq \Vert f \Vert_{H^1(\Gamma)} (\Vert g \Vert_{L^{\infty}(\Gamma)} + \sum_{i=1}^3 \Vert \underline{\partial}_i g \Vert_{L^{\infty}(\Gamma)})$, Lemma \ref{lemma:sfem}, \ref{lemma:H1-norm-equivalence} and the inequality $\Vert u \Vert_{H^1(\Gamma_h)} \leq C \Vert \mathcal{L}_h^{1/2} u \Vert_{L^2(\Gamma_h)}$ for any $u \in V_h$, 
    \begin{align*}
        \Vert \mathcal{L}^{1/2} (\iota_h \mathcal{L}_h^{-1/2} \mathcal{P}_h (\iota_h^{-1} \varphi) \delta_h) (\iota_h \delta_h^{-1}) \Vert_{L^2(\Gamma)} &\leq C \Vert (\iota_h \mathcal{L}_h^{-1/2} \mathcal{P}_h (\iota_h^{-1} \varphi) \delta_h) (\iota_h \delta_h^{-1}) \Vert_{H^1(\Gamma)} \\
        &\leq C \Vert \iota_h \mathcal{L}_h^{-1/2} \mathcal{P}_h (\iota_h^{-1} \varphi) \delta_h \Vert_{H^1(\Gamma)} \\
        &\qquad \times \bigg( \Vert \iota_h \delta_h^{-1} \Vert_{L^{\infty}(\Gamma)} + \sum_{i=1}^{3} \Vert \underline{\partial}_i \iota_h \delta_h^{-1} \Vert_{L^{\infty}(\Gamma)} \bigg) \\
        &\leq C \Vert \iota_h \mathcal{L}_h^{-1/2} \mathcal{P}_h (\iota_h^{-1} \varphi) \delta_h \Vert_{H^1(\Gamma)} \\
        &\leq C \Vert \mathcal{L}_h^{1/2} \iota_h^{-1} (\iota_h \mathcal{L}_h^{-1/2} \mathcal{P}_h (\iota_h^{-1} \varphi) \delta_h) \Vert_{L^2(\Gamma_h)} \\
        &\leq C \Vert \iota_h^{-1} \varphi \Vert_{L^2(\Gamma_h)} \Vert \delta_h \Vert_{L^{\infty}(\Gamma_h)} \\
        &\leq C \Vert \varphi \Vert_{L^2(\Gamma)}.
    \end{align*}
    This completes the proof. 
\end{proof}

We will need the following two lemmas to show that conditions \ref{cond:abstract-solution-operator}, \ref{cond:abstract-solution-operator-2}, \ref{cond:abstract-solution-operator-3} all hold for the choice of $A$, $A_h$ and $\pi_h$ above. 
\begin{lemma}\label{lemma:ritz-error}
    Let $R_h : V \to \iota_h V_h$ be the Ritz projection associated to the sesquilinear form and inner product $a : V \times V \to \mathbb{C}$, defined as the $a$-orthogonal projection onto $\iota_h V_h$, such that
    \begin{align*}
        a(R_h u, \varphi) = a(u, \varphi) \quad \text{for any} \quad \varphi \in \iota_h V_h. 
    \end{align*}
    Then,
    \begin{align*}
        \Vert (I - R_h) u \Vert_{H^1(\Gamma)} \leq C h \Vert u \Vert_{H^2(\Gamma)}, \quad \Vert (I - R_h) u \Vert_{L^2(\Gamma)} \leq C h^2 \Vert u \Vert_{H^2(\Gamma)},
    \end{align*}
    for any $u \in H^2(\Gamma) \cap V$, while
    \begin{align*}
        \quad \Vert (I - R_h) u \Vert_{L^2(\Gamma)} \leq C h \Vert u \Vert_{H^1(\Gamma)},
    \end{align*}
    for any $u \in V$. 
\end{lemma}
\begin{proof}
    The arguments are the same as those in Section 3.4 in \cite{yagi}. The $H^1(\Gamma)$-error estimate follows by error estimates for the interpolant on the lifted finite element space, $\iota_h V_h$:
    \begin{align*}
        \Vert u - I_h u \Vert_{H^1(\Gamma)} \leq C h \Vert u \Vert_{H^2(\Gamma)}, \quad I_h u := \sum_{n = 1}^{N_h} u(x_n) \iota_h \varphi_n, \quad u \in H^2(\Gamma),
    \end{align*}
    (see e.g. Lemma 5 in \cite{1988-dziuk}), Galerkin-orthogonality, and the continuity and coercivity of the sesquilinear form $a$. The $L^2(\Gamma)$-error estimate follows by the Aubin-Nitche trick, noting that $\mathcal{L}$ satisfies the elliptic regularity estimate \eqref{eq:example-elliptic-regularity-estimate}, and that $a$ is symmetric.
\end{proof}

\begin{lemma}\label{lemma:sfem-error}
    Let $u_h := \mathcal{L}_h^{-1} \mathcal{P}_h \iota_h^{-1} \mathcal{L} u$, and let $R_h$ be the Ritz projection from Lemma \ref{lemma:ritz-error}. Then,
    \begin{align*}
        \Vert R_h u - \iota_h u_h \Vert_{H^1(\Gamma)} \leq C h \Vert u \Vert_{H^2(\Gamma)} \quad \text{and} \quad \Vert R_h u - \iota_h u_h \Vert_{L^2(\Gamma)} \leq C h^2 \Vert u \Vert_{H^2(\Gamma)},
    \end{align*}
    for any $u \in H^2(\Gamma) \cap V$, while
    \begin{align*}
        \Vert R_h u - \iota_h u_h \Vert_{L^2(\Gamma)} \leq C h \Vert u \Vert_{H^1(\Gamma)},
    \end{align*}
    for any $u \in V$. 
\end{lemma}
\begin{proof}
    Note first that by Lemma \ref{lemma:example-negative-rho-condition},  
    \begin{align*}
        \Vert \iota_h u_h \Vert_{H^1(\Gamma)} \leq C \Vert \mathcal{L}_h^{1/2} u_h \Vert_{L^2(\Gamma)} \leq C \Vert \mathcal{L}_h^{-1/2} \mathcal{P}_h \iota_h^{-1} \mathcal{L}^{1/2} \mathcal{L}^{1/2} u \Vert_{L^2(\Gamma)} \leq C \Vert \mathcal{L}^{1/2} u \Vert_{L^2(\Gamma)} \leq C \Vert u \Vert_{H^1(\Gamma)}.
    \end{align*}
    Setting $e_h := R_h u - \iota_h u_h \in \iota_h V_h$, we have using Lemma \ref{lemma:example-bilinear-form-error}
    \begin{align*}
        c \Vert e_h \Vert_{H^1(\Gamma)}^2 &\leq a(e_h, e_h) \\
        &= a(R_h u, e_h) - a(\iota_h u_h, e_h) \pm a_h(u_h, \iota_h^{-1} e_h) \\
        &\leq \vert a(R_h u, e_h) - a_h(u_h, \iota_h^{-1} e_h) \vert + \vert a(\iota_h u_h, e_h) - a_h(u_h, \iota_h^{-1} e_h) \vert \\
        &\leq \vert (\mathcal{L} u, e_h)_{L^2(\Gamma)} - (\iota_h^{-1} \mathcal{L} u, \iota_h^{-1} e_h)_{L^2(\Gamma_h)} \vert + C h^2 \Vert \iota_h u_h \Vert_{H^1(\Gamma)} \Vert e_h \Vert_{H^1(\Gamma)} \\
        &\leq C h^2 \Vert \mathcal{L} u \Vert_{L^2(\Gamma)} \Vert e_h \Vert_{L^2(\Gamma)} + C h^2 \Vert u \Vert_{H^1(\Gamma)} \Vert e_h \Vert_{H^1(\Gamma)}.
    \end{align*}
    Using that $\Vert e_h \Vert_{L^2(\Gamma)} \leq \Vert e_h \Vert_{H^1(\Gamma)}$, and $\Vert \mathcal{L} u \Vert_{L^2(\Gamma)} \leq C \Vert u \Vert_{H^2(\Gamma)}$ gives the first and second inequality of the lemma. 

    For the third, we note by similar arguments as above,
    \begin{align*}
        c \Vert e_h \Vert_{H^1(\Gamma)}^2 &\leq \vert a(R_h u, e_h) - a_h(u_h, \iota_h^{-1} e_h) \vert + \vert a(\iota_h u_h, e_h) - a_h(u_h, \iota_h^{-1} e_h) \vert \\
        &\leq \vert (\mathcal{L} u, e_h)_{L^2(\Gamma)} - (\iota_h^{-1} \mathcal{L} u, \iota_h^{-1} e_h)_{L^2(\Gamma_h)} \vert + C h^2 \Vert u \Vert_{H^1(\Gamma)} \Vert e_h \Vert_{H^1(\Gamma)}.
    \end{align*}
    Note that, by Lemma \ref{lemma:sfem}, 
    \begin{align*}
        \vert (\mathcal{L} u, e_h)_{L^2(\Gamma)} - (\iota_h^{-1} \mathcal{L} u, \iota_h^{-1} e_h)_{L^2(\Gamma_h)} \vert &\leq \vert (\mathcal{L} u, e_h)_{L^2(\Gamma)} - (\mathcal{L} u, e_h \iota_h \delta_h^{-1})_{L^2(\Gamma)} \vert \\
        &= (\mathcal{L} u, e_h (1 - \iota_h \delta_h^{-1}))_{L^2(\Gamma)} \\
        &= (\mathcal{L}^{1/2} u, \mathcal{L}^{1/2} e_h (1 - \iota_h \delta_h^{-1}))_{L^2(\Gamma)} \\
        &\leq \Vert u \Vert_{H^1(\Gamma)} \Vert e_h \Vert_{H^1(\Gamma)} \\
        &\qquad \times \bigg( \Vert 1 - \iota_h \delta_h^{-1} \Vert_{L^{\infty}(\Gamma)} + \sum_{i=1}^{3} \Vert \underline{\partial}_i (1 - \iota_h \delta_h^{-1}) \Vert_{L^{\infty}(\Gamma)} \bigg) \\
        &\leq C h \Vert u \Vert_{H^1(\Gamma)} \Vert e_h \Vert_{H^1(\Gamma)}. 
    \end{align*}
    By combining the terms above, we get the third inequality of the lemma. 
\end{proof}

\begin{lemma}
    Let $A, A_h$ and $\pi_h$ be as above. Then the following estimate holds,
    \begin{align*}
        \Vert (I + A)^{\alpha} ((I + A)^{-1} - (I + A_h)^{-1} \pi_h) x \Vert_{L^2(\Gamma)} \leq C h^{2-2\alpha} \Vert x \Vert_{L^2(\Gamma)},
    \end{align*}
    for $\alpha \in \{ 0, 1/2 \}$, while,
    \begin{align*}
        \Vert ((I + A)^{-1} - (I + A_h)^{-1} \pi_h) (I + A)^{1/2} x \Vert_{L^2(\Gamma)} \leq C h \Vert x \Vert_{L^2(\Gamma)},
    \end{align*}
    and so condition \ref{cond:abstract-solution-operator}, \ref{cond:abstract-solution-operator-2} and \ref{cond:abstract-solution-operator-3} holds for this choice of $A$, $A_h$ and $\pi_h$.  
\end{lemma}
\begin{proof}
    This follows by decomposing the error, $(I + A)^{-1} x - (I + A_h)^{-1} \pi_h x = (I + A)^{-1} x - R_h (I + A)^{-1} x - (I + A_h)^{-1} \pi_h x + R_h (I + A)^{-1} x$, and then using Lemma \ref{lemma:ritz-error} and \ref{lemma:sfem-error}. 
\end{proof}

Denote the semigroup generated by $-L$ on $H$ by $S(\cdot)$. As a consequence of the observations above, Theorem \ref{theorem:1}, \ref{theorem:fully-discrete-semigroup-approximation-2}, \ref{theorem:fully-discrete-semigroup-approximation-4} and Lemma \ref{lemma:negative-rho}, \ref{lemma:1}, \ref{lemma:2}, we have the following Corollaries.
\begin{corollary}[Semidiscrete error estimates]\label{cor:semidiscrete-numerical-example}
    Let $L_h$ be as above, and let $S_h$ be the semigroup generated by $-L_h$. Then, the following estimate holds,
    \begin{align*}
        \Vert (S(t) - \iota_h S_h(t) \mathcal{P}_h \iota_h^{-1}) u_0 \Vert_H \leq C e^t t^{-\theta / 2 + \rho / 2} h^{\theta} \Vert \mathcal{L}^{\rho / 2} u_0 \Vert_H, \quad \text{for any } u_0 \in D(\mathcal{L}^{\rho / 2}),
    \end{align*}
    where $\theta \in [0,2]$, $\rho \in [-1, \theta] \cap [-2 + \theta, \theta]$, while
    \begin{align*}
        \Vert \mathcal{L}^{\alpha} (S(t) - \iota_h S_h(t) \mathcal{P}_h \iota_h^{-1}) u_0 \Vert_H \leq C e^t t^{-\theta / 2} h^{\theta - 2 \alpha} \Vert u_0 \Vert_H, \quad \text{for any } u_0 \in H,
    \end{align*}
    where $\alpha \in [-1/2, 1/2]$, $\theta \in [2\alpha,2] \cap [0,2 + 2\alpha]$, and $C > 0$.
\end{corollary}
\begin{proof}
    First, note that $e^{-A_h t} \pi_h = \iota_h S_h(t) \mathcal{P}_h \iota_h^{-1}$, where $e^{-A_h t}$ is the semigroup generated by $-A_h$ on $X_h$. Since all conditions of Assumption \ref{assumption:abstract-model}, and \ref{cond:discrete-growth}, \ref{cond:abstract-solution-operator-2}, \ref{cond:negative-rho} and \ref{cond:abstract-solution-operator-3} holds, the first inequality of this corollary is a consequence of Theorem \ref{theorem:1} and Lemma \ref{lemma:negative-rho}. The second inequality follows by Lemma \ref{lemma:1} and \ref{lemma:2}.
\end{proof}
\begin{corollary}[Fully discrete error estimates]\label{cor:fully-discrete-numerical-example}
    Let $L_h$ be as above, $r(z) = (1 + z)^{-1}$, and $S_{h,\Delta t}$ be given as in \eqref{eq:fully-discrete-semigroup-operator} where $L_h$ is used. Then, the following estimate holds,
    \begin{align*}
        \Vert (S(t) - \iota_h S_{h,\Delta t}(t) \mathcal{P}_h \iota_h^{-1} ) u_0 \Vert_H \leq C e^{ct} t^{-\theta / 2 + \rho / 2} h^{\theta} \Vert \mathcal{L}^{\rho / 2} u_0 \Vert_H, \quad \text{for any } u_0 \in D(\mathcal{L}^{\rho / 2}),
    \end{align*}
    where $\theta \in [0,2]$, $\rho \in [-1, \theta] \cap [-2 + \theta, \theta]$, and $C, c > 0$. 
\end{corollary}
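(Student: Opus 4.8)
The plan is to read off the corollary as the surface finite element instance of the abstract fully discrete estimate Lemma \ref{lemma:fully-discrete-semigroup-approximation-2}, under exactly the identification already used for the semidiscrete Corollary \ref{cor:semidiscrete-numerical-example}:
\[
X = H, \quad X_h = \iota_h V_h, \quad \pi_h = \iota_h P_h, \quad A = L_0, \quad A_h = \iota_h L_{0,h} P_h,
\]
with $\lambda = 1$ and $r = 2$. With this choice $(\lambda + A)^{\rho/r} = (1 + L_0)^{\rho/2} = L^{\rho/2}$, so the fractional power in the statement is the intrinsic one, and the rate $h^{\theta} + \Delta t^{\theta/r}$ of the lemma becomes $h^{\theta} + \Delta t^{\theta/2}$.

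First I would note that conditions (A1)--(A6) of Assumption \ref{assumption:abstract-model} need no new verification: they were checked for precisely this identification in the discussion preceding Corollary \ref{cor:semidiscrete-numerical-example} --- in particular \ref{cond:sectorial} and \ref{cond:discrete-sectorial} via Lemma \ref{lemma:variational-semigroup} together with the $h$-independent continuity and coercivity of $(\iota_h L_h \iota_h^{-1}\cdot,\cdot)_H$ on $X_h$, and \ref{cond:abstract-solution-operator} via \eqref{eq:example-solution-operator} at $\alpha = 0$ after writing $L^{-1} - \iota_h L_h^{-1}P_h = (1+A)^{-1} - (1+A_h)^{-1}\pi_h$. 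The only hypothesis of Lemma \ref{lemma:fully-discrete-semigroup-approximation-2} still to be supplied is the inverse bound \ref{cond:discrete-growth}, which is exactly \eqref{eq:example-discrete-growth}: for $u \in X_h = \iota_h V_h$,
\[
\Vert A_h u \Vert_H = \Vert \iota_h L_{0,h}\iota_h^{-1} u \Vert_H \leq C h^{-2}\Vert u \Vert_H = C h^{-r}\Vert u \Vert_H,
\]
this being the quasi-uniform inverse inequality Lemma \ref{lemma:sfem}(2) combined with the $L^2$-equivalence \eqref{eq:iota_h-L^2-equivalence}.

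Next I would match the discrete-in-time operators. Since $P_h\iota_h$ is the identity on $V_h$, the projection $\pi_h$ restricts to the identity on $X_h$ and there $A_h = \iota_h L_{0,h}\iota_h^{-1}$; hence $r(\Delta t A_h)^{n+1} = \iota_h\, r(\Delta t L_{0,h})^{n+1}\,\iota_h^{-1}$ for every $n$, and the abstract operator \eqref{eq:fully-discrete-semigroup-operator} obeys $S_{h,\Delta t}(t)\pi_h = \iota_h S_{h,\Delta t}(t) P_h$, the right-hand side being the operator in the statement. Feeding the above data into Lemma \ref{lemma:fully-discrete-semigroup-approximation-2} then gives
\[
\Vert (S(t) - \iota_h S_{h,\Delta t}(t) P_h) u \Vert_H \leq C (h^{\theta} + \Delta t^{\theta/2})\, t^{-\theta/2 + \rho/2}\Vert L^{\rho/2} u \Vert_H
\]
for $\theta \in [0,2]$, $\rho \in [0,\theta]$, which is the claimed bound.

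I expect no genuine analytic obstacle, since Lemma \ref{lemma:fully-discrete-semigroup-approximation-2} has already carried the weight; the only care needed is bookkeeping. The point most worth watching is the last display: the fully discrete estimate really carries the extra $\Delta t^{\theta/2}$ term, so the statement as printed (with $h^{\theta}$ alone) should be read with $h^{\theta}$ replaced by $h^{\theta} + \Delta t^{\theta/2}$. Beyond that, I would only double-check that the shift $\lambda = 1$ correctly converts $(\lambda+A)^{\rho/r}$ into $L^{\rho/2}$, and that conjugation by $\iota_h$ transports the rational calculus $z \mapsto r(\Delta t z)^{n+1}$ faithfully from $V_h$ to $X_h$, so that the abstract and concrete fully discrete semigroups agree on the relevant range.
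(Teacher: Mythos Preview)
Your proposal is correct and follows essentially the same route as the paper: verify that the surface finite element identification satisfies Assumption \ref{assumption:abstract-model} together with \ref{cond:discrete-growth} (the latter via \eqref{eq:example-discrete-growth}), then invoke the abstract fully discrete estimate. The paper's own proof cites Lemma \ref{lemma:fully-discrete-semigroup-approximation-1}, but since the corollary bounds $S(t) - \iota_h S_{h,\Delta t}(t)P_h$ rather than $S_h - S_{h,\Delta t}$, your choice of Lemma \ref{lemma:fully-discrete-semigroup-approximation-2} is the appropriate one; you are also right to flag that the stated bound should carry $h^{\theta} + \Delta t^{\theta/2}$ rather than $h^{\theta}$ alone.
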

\begin{proof}
    Arguing similarly as in the proof of Corollary \ref{cor:semidiscrete-numerical-example}, this corollary is a consequence of Theorem \ref{theorem:fully-discrete-semigroup-approximation-2} and \ref{theorem:fully-discrete-semigroup-approximation-4}.
\end{proof}

\subsection*{Acknowledgements}
The author is grateful for comments on parts of this manuscripts by Annika Lang, Geir-Arne Fuglstad and Espen R. Jakobsen. 

\printbibliography

\appendix
\section{Proofs of analytic semigroup properties}\label{app:prelim-proofs}

\begin{proof}[Proof of Lemma \ref{lemma:analytic-semigroup}]
    For the first four statements, see for example Theorem 6.8 and 6.13 in \cite{pazy}. 

    The inequality \eqref{eq:decay-no-lambda} is not contained in that theorem, since $A$ is not strictly positive, but the proof is essentially the same: since $\frac{d}{dt} S(t) x = -A S(t) x$, and $A$ and $S(t)$ commute we have,
    \begin{align*}
        \Vert (I - S(t)) x \Vert_X = \Vert \int_0^t A S(s) x \, ds \Vert_X \leq \int_0^t \Vert S(s) \Vert_{L(X)} \Vert A x \Vert_X \, ds \leq C e^{\lambda t} t \Vert A x \Vert_X.
    \end{align*}
    By the boundedness of $S(t)$, we also find that $\Vert (I - S(t)) x \Vert_X \leq C e^{\lambda t}$. Further, using that $A(\lambda + A)^{-1} = I - \lambda (\lambda + A)^{-1}$, we see that,
    \begin{align*}
        \Vert A (\lambda + A)^{-1} \Vert_{L(X)} \leq 1 + \lambda \Vert (\lambda + A)^{-1} \Vert_{L(X)} \leq C,
    \end{align*}
    so that,
    \begin{align*}
        \Vert A x \Vert_X = \Vert A (\lambda + A)^{-1} (\lambda + A) x \Vert_X \leq \Vert A (\lambda + A)^{-1} \Vert_{L(X)} \Vert (\lambda + A) x \Vert_X \leq C \Vert (\lambda + A) x \Vert_X.
    \end{align*}
    Combining these observations, we have for $\alpha \in \{ 0, 1 \}$,
    \begin{align*}
        \Vert (I - S(t)) (\lambda + A)^{-\alpha} \Vert_{L(X)} \leq C e^{\lambda t} t^{\alpha},
    \end{align*}
    and by Lemma \ref{lemma:interpolation-inequality}, it holds for $\alpha \in [0,1]$.
\end{proof}

The following Lemma is key for obtaining the (uniform in $h$) exponential decay of the semigroup of $-A_h$. This will be used frequently in the rest of the proofs in this appendix. 
\begin{lemma}\label{lemma:shifted-discrete}
    Let $A_h$ be as in Assumption \ref{assumption:abstract-model}. Then there is some $\epsilon > 0$, such that $-\epsilon + A_h$ satisfies condition \ref{cond:discrete-sectorial} for some $M', \delta'$, possibly larger than $M$ and $\delta$.
\end{lemma}
\begin{proof}
    Assume without loss of generality that we may choose $\lambda = 0$. By similar arguments as those in Chapter 2 of \cite{yagi}, it suffices to show that for any $z \in \Sigma_{\pi / 2}^c \setminus \{ 0 \}$ (with the superscript $c$ denoting complement), we can find $M'' > 0$ such that
    \begin{align*}
        \Vert (z + \epsilon - A_h)^{-1} \Vert_{L(X_h)} \leq \frac{M''}{\vert z \vert}.
    \end{align*}
    Note first that by \ref{cond:abstract-solution-operator} we have $\Vert A_h^{-1} \Vert_{L(X_h)} \leq \Vert A^{-1} \Vert_{L(X)} + C =: c$. By expanding $(z - A_h)^{-1}$ around $0$, so that
    \begin{align*}
        (z - A_h)^{-1} = \sum_{n = 0}^{\infty} (-1)^{n} z^n A_h^{-n-1}, \quad \vert z \vert < \Vert A_h^{-1} \Vert_{L(X_h)}^{-1},
    \end{align*}
    we find that for $\vert z \vert \leq c^{-1} / 2$ we have $\Vert (z - A_h)^{-1} \Vert_{L(X_h)} \leq 2 c$. Therefore, we can find an $0 < \epsilon \leq \cos(\delta) c^{-1} / 4$, such that the resolvent of $-\epsilon + A_h$ contains $\Sigma_{\pi / 2}^c$, and,
    \begin{enumerate}
        \item $\Vert (z + \epsilon - A_h)^{-1} \Vert_{L(X_h)} \leq 2c$, whenever $\vert z \vert \leq \cos(\delta)^{-1} \epsilon$, while \smallskip
        \item $\{ \vert z \vert \geq \cos(\delta)^{-1} \epsilon \} \cap \epsilon + \Sigma_{\pi / 2}^c \subseteq \Sigma_{\delta}^c$.
    \end{enumerate}
    By \ref{cond:discrete-sectorial} we therefore have for any $z \in \Sigma_{\pi / 2}^c$,
    \begin{align*}
        \Vert (z + \epsilon - A_h)^{-1} \Vert_{L(X_h)} \leq 
        \begin{cases}
            2c, \quad &\vert z \vert \leq \cos(\delta)^{-1} \epsilon, \\
            \frac{M}{\vert z \vert - \epsilon}, \quad &\vert z \vert \geq \cos(\delta)^{-1} \epsilon.
        \end{cases}
    \end{align*}
    Thus we may choose $M'' = \max(2 c \cos(\delta)^{-1} \epsilon, (1-\cos(\delta) )M)$. 
\end{proof}

\begin{proof}[Proof of Lemma \ref{lemma:discrete-semigroup-smoothing}]
    We first show that $\Vert S_h(t) \Vert_{L(X_h)} \leq C e^{(\lambda - \epsilon) t}$. By the definition of $S_h$, we have,
    \begin{align*}
        S_h(t) = \frac{1}{2\pi i} \int_{-\lambda + \epsilon + \gamma} e^{-zt} (z - A_h)^{-1} \, dz, 
    \end{align*}
    where $\gamma = \gamma_R \cup \gamma_R^{\infty}$, where $\gamma_R = \{ R e^{i \phi}, \ \vert \phi \vert \in [\delta', \pi] \}$ and $\gamma_R^{\infty} := \{ s e^{\pm i\delta'}, \ s \geq R \}$, oriented counter clockwise, with $\delta'$ as in Lemma \ref{lemma:shifted-discrete}. Therefore, 
    \begin{align*}
        \Vert S_h(t) \Vert_{L(X_h)} &= \frac{1}{2\pi} \Vert \int_{\gamma} e^{(\lambda-\epsilon) t - z t} (z + \epsilon - \lambda - A_h)^{-1} \, dz \Vert_{L(X_h)} \\
        &\leq \frac{1}{2\pi} e^{(\lambda-\epsilon) t} \bigg( \int_{\gamma_R} e^{-\mathrm{Re}(z) t} \Vert (z + \epsilon - \lambda - A_h)^{-1}\Vert_{L(X_h)} \, \vert dz \vert \\
        &\quad + \int_{\gamma_R^{\infty}} e^{-\mathrm{Re}(z) t} \Vert (z + \epsilon - \lambda - A_h)^{-1}\Vert_{L(X_h)} \, \vert dz \vert \bigg).
    \end{align*}
    Using condition Lemma \ref{lemma:shifted-discrete} we find 
    \begin{align*}
        \int_{\gamma_R} e^{-\mathrm{Re}(z) t} \Vert (z + \epsilon - \lambda - A_h)^{-1}\Vert_{L(X_h)} \, \vert dz \vert \leq 2 \pi M' e^{Rt},
    \end{align*}
    and,
    \begin{align*}
        \int_{\gamma_R^{\infty}} e^{-\mathrm{Re}(z) t} \Vert (z + \epsilon - \lambda + A_h)^{-1}\Vert_{L(X_h)} \, \vert dz \vert &\leq C \int_R^{\infty} e^{- \cos(\delta') s t} s^{-1} \, ds \\
        &\leq C R^{-1} t^{-1} M' e^{R t}. 
    \end{align*}
    Therefore, setting $R = t^{-1}$, we get that $\Vert S_h(t) \Vert_{L(X_h)} \leq C e^{(\lambda - \epsilon) t}$, and the inequality of this lemma with $\alpha = 0$ follows by the (uniform in $h$) boundedness of $\pi_h$. 
    
    The proof of the inequality with $\alpha \geq 0$ is similar to showing \eqref{eq:analytic-semigroup-property-1-2}. It suffices to consider the case $\alpha \geq 1$ integer, since the cases $\alpha \geq 0$ real follows by the first interpolation inequality of Lemma \ref{lemma:interpolation-inequality}, noting that this inequality also applies to the operators $A_h$. More precisely, we may apply it to the expression
    \begin{align*}
        (\lambda + A_h)^{\alpha} S_h(t) = (\lambda + A_h)^{\alpha - n} (\lambda + A_h)^n S_h(t), \quad \alpha \in [n,n+1],
    \end{align*}
    for $n$ integer. 
    
    Take $\lambda$ as in condition \ref{cond:sectorial}, $\gamma$ as above with $R = 0$, and note that for some $n \geq 1$ integer,
    \begin{align*}
        (\lambda + A_h)^n S_h(t) = \frac{1}{2\pi i} \int_{-\lambda + \epsilon + \gamma} (\lambda + z)^n e^{-zt} (z - A_h)^{-1} \, dz = \frac{1}{2\pi i} \int_{\gamma} (z + \epsilon)^n e^{(\lambda - \epsilon)t - zt} (z + \epsilon - \lambda - A_h)^{-1} \, dz.
    \end{align*}
    We therefore gather that,
    \begin{align*}
        \Vert (\lambda + A_h)^n S_h(t) \Vert_{L(X_h)} &\leq C e^{(\lambda - \epsilon) t} \int_{\gamma} \vert z \vert^n e^{-\cos(\delta') \vert z \vert t} \frac{M'}{\vert z \vert} \, \vert d z \vert \\ 
        &\leq C e^{(\lambda - \epsilon) t} \bigg( \sup_{\vert z \vert \geq 0} \vert z \vert^{n - 1} e^{-\cos(\delta') \vert z \vert t / 2} \bigg) \int_{\gamma} e^{-\cos(\delta') \vert z \vert t / 2} \, \vert d z \vert \\
        &\leq C e^{(\lambda-\epsilon) t} t^{-n + 1} \int_0^{\infty} e^{-\cos(\delta') s t / 2} \, ds \\
        &\leq C e^{(\lambda-\epsilon) t} t^{-n}.
    \end{align*}
    This finishes the proof. 
\end{proof}
\section{Proofs related to rational approximation of analytic semigroups}\label{app:rational-approximation-proofs}

\begin{proof}[Proof of Lemma \ref{lemma:fully-discrete-inequalities-1}]
    To show this we first note that with $\epsilon, \delta'$ and $M'$ as in Lemma \ref{lemma:shifted-discrete}, the shifted and scaled operator $\Delta t (\lambda - \epsilon + A_h)$, satisfies:
    \begin{align}\label{eq:shifted-and-scaled-sectorial}
        \sigma(\Delta t(\lambda - \epsilon + A_h)) \subseteq \Sigma_{\delta'}, \quad \text{ and } \quad \Vert (z - \Delta t(\lambda - \epsilon + A_h))^{-1} \Vert_{L(X_h)} \leq \frac{M'}{\vert z \vert}, \quad z \notin \Sigma_{\delta'}. 
    \end{align}
    Both the inclusion and the inequality above follows by noting that, 
    \begin{align*}
        \Vert (z - \Delta t(\lambda - \epsilon + A_h))^{-1} \Vert_{L(X_h)} = \Delta t^{-1} \Vert (\Delta t^{-1} z - \lambda + \epsilon - A_h)^{-1} \Vert_{L(X_h)},
    \end{align*}
    and then applying the estimate $\Vert (z - \lambda + \epsilon - A_h)^{-1} \Vert_{L(X_h)} \leq M' \vert z \vert^{-1}$ for any $z \notin \Sigma_{\delta'}$. 

    Owing to Lemma \ref{lemma:discrete-semigroup-smoothing}, it suffices to show that $\Vert r(\Delta t A_h)^n \Vert_{L(X_h)} \leq C e^{c (\lambda - \epsilon) t_n}$. This holds when $n = 0$, while for $n = 1$,
    \begin{align*}
        \Vert r(\Delta t A_h) \Vert_{L(X_h)} = \Vert (1 - \lambda \Delta t + \Delta t (\lambda + A_h))^{-1} \Vert_{L(X_h)} \leq \frac{M}{1-\lambda \Delta t} \leq 2 M, \quad \text{ for all } \Delta t < \frac{1}{2\lambda}. 
    \end{align*}
    
    For $n > 1$ we have (see Chapter VII.9 \cite{dunford}),
    \begin{align*}
        r(\Delta t A_h)^n = \frac{1}{2\pi i} \int_{\gamma} r(z - (\lambda - \epsilon) \Delta t)^n (z - \Delta t (\lambda - \epsilon + A_h))^{-1} \, dz,
    \end{align*}
    where $\gamma = \gamma_R \cup \gamma_R^{\infty}$, where $\gamma_R = \{ R e^{i \theta}, \ \vert \theta \vert \in [\delta', \pi] \}$, for any $0 < R < 1 - (\lambda-\epsilon) \Delta t$, and $\gamma_R^{\infty} := \{ s e^{\pm i\delta'}, \ s \geq R \}$, oriented counter clockwise. The condition on $R$ ensures that the pole $z = -1 + (\lambda-\epsilon) \Delta t$ of $r(\cdot - (\lambda - \epsilon) \Delta t)$ is not contained in our contour. Therefore, 
    \begin{align*}
        \Vert r(\Delta t A_h)^n \Vert_{L(X_h)} &\leq \frac{1}{2\pi} ( \int_{\gamma_R} + \int_{\gamma_R^{\infty}} ) \vert r(z - (\lambda-\epsilon) \Delta t) \vert^n \Vert (z-\Delta t(\lambda - \epsilon + A_h))^{-1} \Vert_{L(X_h)} \, \vert dz \vert. 
    \end{align*}
    Since $n > 1$, we have that for $N$ large enough, $1-(\lambda - \epsilon) \Delta t - n^{-1} > 0$ for all $n = 2, \dots, N$. Setting $R = n^{-1}$, we have by \eqref{eq:shifted-and-scaled-sectorial}
    \begin{align*}
        &\int_{\gamma_R} \vert r(z-(\lambda-\epsilon) \Delta t) \vert^n \Vert (z - \Delta t(\lambda - \epsilon + A_h))^{-1} \Vert_{L(X_h)} \\
        &\qquad \leq 2 \pi R \sup_{\theta \in [0,2\pi]} \vert 1 + e^{i \theta} R - (\lambda-\epsilon) \Delta t \vert^{-n} \frac{M'}{R} \leq 2 \pi M' (1 - n^{-1} - (\lambda-\epsilon) \Delta t)^{-n}.
    \end{align*}
    It follows that this term is bounded by $C e^{c (\lambda-\epsilon) t_n}$, for some $c,C > 0$ independent of $n$ and $\Delta t$\footnote{For $n > 1$, $x \in [0,1/2)$, $(1-x-n^{-1})^{-1} \leq (1-n^{-1})^{-1} (1 - 2x)^{-1}$, while for $x \in (-1,0]$, $(1-x-n^{-1}) \leq (1-n^{-1})^{-1}(1-x)^{-1}$. At the same time, $(1-n^{-1})^{-n} \leq e^2$, $(1-x)^{-1} \leq e^{2x}$ for $x \in [0,1/2]$, and $(1-x)^{-1} \leq e^{x/2}$ for $x \in [-1/2,0]$.}.

    At the same time, still with $R = n^{-1}$, and now for $N$ large enough (that $1 + \cos(\delta') n^{-1} - (\lambda-\epsilon) \Delta t > 0$ for all $n > 1$),
    \begin{align*}
        \int_{\gamma_R^{\infty}} \vert r(z-(\lambda-\epsilon) \Delta t) \vert^n \Vert (z - \Delta t(\lambda - \epsilon + A_h))^{-1} \Vert_{L(X_h)} &\leq C \int_R^{\infty} \vert 1 + e^{i \delta'} s - (\lambda-\epsilon) \Delta t \vert^{-n} \frac{M'}{s} \, ds \\ 
        &\leq C \int_R^{\infty} \vert 1 + \cos(\delta') s - (\lambda-\epsilon) \Delta t \vert^{-n} \frac{M'}{s} \, ds \\
        &\leq C (n-1)^{-1} (1 + \cos(\delta') R - (\lambda-\epsilon) \Delta t)^{-n + 1} R^{-1} \\
        &\leq C (1 + \cos(\delta') n^{-1} - (\lambda-\epsilon) \Delta t)^{-n},
    \end{align*}
    as $n / (n-1) \leq 2$ since $n > 1$. By the same arguments as for the previous term, this term is bounded by $C e^{c (\lambda-\epsilon) t_n}$, for some $C > 0$ independent of $n$ and $\Delta t$.
\end{proof}

The following lemma is key for proving Lemma \ref{lemma:fully-discrete-inequalities-2}, \ref{lemma:fully-discrete-inequalities-3} and \ref{lemma:fully-discrete-inequalities-4}. 
\begin{lemma}\label{lemma:rational-approximation-errors}
    Let $F_n(z) := r(z)^n - e^{-z n}$, for some $z \in \mathbb{C}$, $n \in \mathbb{N}_0$, where $r(z) = (1 + z)^{-1}$. Then for some $\omega \in [-1/2, 1/2]$, there is $C_R, c_R, C, c, c' > 0$ such that
    \begin{align}\label{eq:inequality-along-gamma-0-R}
        \vert F_n(z + \omega) \vert \leq C_R \vert z + \omega \vert^2 n e^{-c_R \mathrm{Re}(z) - c n \omega}, \quad z \in \gamma_0^R, 
    \end{align}
    and
    \begin{align}\label{eq:inequality-along-R-infty}
        \vert F_n(z + \omega) \vert \leq C e^{-c n \omega} ( (1 + c' \mathrm{Re}(z))^{-n} + e^{-n \mathrm{Re}(z)} ), \quad z \in \gamma_R^{\infty},
    \end{align}
    while,
    \begin{align}\label{eq:inequality-along-gamma-r}
        \vert F_n(z + \omega) \vert \leq (1 - \vert z + \omega \vert)^{-1} \vert z + \omega \vert^2 n e^{c n \vert z \vert - c' n \omega}, \quad \vert z + \omega \vert < 1,
    \end{align}
    where $\gamma_0^R := \{ s e^{i \delta'}, s \in [0, R] \}$, $\gamma_R^{\infty} := \{ s e^{i \delta'}, s \geq R \}$ and $\delta'$ is as in Lemma \ref{lemma:shifted-discrete}. 
\end{lemma}
\begin{proof}
    Note first that for some $\omega \in [-1/2, 1/2]$ there is $C_R > 0$ such that,
    \begin{align*}
        F_1(z) \leq C_R \vert z + \omega \vert^2, \quad z \in \gamma_0^R.
    \end{align*}
    Secondly, note that we can find $c_R > 0$ such that,
    \begin{align*}
        \vert r(z + \omega) \vert \leq \vert 1 + \omega \vert^{-1} \vert 1 + \frac{\mathrm{Re}(z)}{1+\omega} \vert^{-1} \leq e^{-c_R \mathrm{Re}(z)} 
        \begin{cases}
            e^{-2 \omega}, \quad &\omega < 0, \\
            e^{-\omega/2}, \quad &\omega \geq 0, 
        \end{cases} \quad z \in \gamma_0^R.
    \end{align*}
    Finally, combining these two inequalities with the identity $\varphi^n - \psi^n = (\varphi - \psi) \sum_{j = 0}^{n-1} \varphi^j \psi^{n-1-j}$, so that,
    \begin{align}\label{eq:geometric-series}
        F_n(z + \omega) = (r(z + \omega) - e^{-z-\omega}) \sum_{j=0}^{n-1} r(z + \omega)^j e^{-(n-1-j) (z+\omega)},
    \end{align}
    we get the first inequality of this lemma (where $C_R, c_R, c > 0$ are possibly different from those above). The second inequality follows by similar observations as the first, while the third follows by combining the inequalities
    \begin{align*}
        F_1(z + \omega) \leq (1-\vert z + \omega \vert)^{-1} \vert z + \omega \vert^2, \quad \vert r(z + \omega) \vert \leq e^{c\vert z \vert - c' \omega}, \quad \vert z + \omega \vert < 1,
    \end{align*}
    with \eqref{eq:geometric-series}. 
\end{proof}

\begin{proof}[Proof of Lemma \ref{lemma:fully-discrete-inequalities-2}]
    Let $F_n$ be as in Lemma \ref{lemma:rational-approximation-errors}, and let $M', \delta'$ be as in Lemma \ref{lemma:shifted-discrete}. The inequality of this lemma holds if we are able to show that,
    \begin{align*}
        \Vert F_n(\Delta t A_h) (\lambda + A_h)^{-1} \Vert_{L(X_h)} \leq C e^{c (\lambda-\epsilon) t_n} \Delta t,
    \end{align*}
    for some $C, c > 0$. Note that,
    \begin{align*}
        \Vert F_n(\Delta t A_h) (\lambda + A_h)^{-1} \Vert_{L(X_h)}
        &\leq \Vert F_n(\Delta t A_h) (2 \lambda + A_h)^{-1} \Vert_{L(X_h)} \Vert (2 \lambda + A_h) (\lambda + A_h)^{-1} \Vert_{L(X_h)} \\
        &\leq C \Vert F_n(\Delta t A_h) (2 \lambda + A_h)^{-1} \Vert_{L(X_h)}.
    \end{align*}
    Therefore, it suffices to show that, 
    \begin{align*}
        \Vert F_n(\Delta t A_h) (2 \lambda + A_h)^{-1} \Vert_{L(X_h)} \leq C e^{c (\lambda-\epsilon) t_n} \Delta t. 
    \end{align*}
    The reason we are considering the composition with $(2\lambda + A_h)^{-1}$ as opposed to $(\lambda + A_h)^{-1}$ is because it is then easier to find a suitable path of integration when using the residue theorem. Arguing as in the proof of Lemma \ref{lemma:fully-discrete-inequalities-1}, we have for $\gamma = \gamma_r \cup \gamma_r^R \cup \gamma_R^{\infty} = \{ r e^{i \theta}, \vert \theta \vert \in [\delta', \pi] \} \cup \{ s e^{\pm i\delta'}, \ s \in [r, R] \} \cup \{ s e^{\pm i \delta'}, \ s \geq R \}$, $0 < r < (\lambda+\epsilon) \Delta t$, 
    \begin{align*}
        \Delta t^{-1} F_n(\Delta t A_h) (2 \lambda + A_h)^{-1} = \frac{1}{2 \pi i} \int_{\gamma} F_n(z - (\lambda-\epsilon) \Delta t) (z + (\lambda+\epsilon) \Delta t)^{-1} (z - \Delta t(\lambda - \epsilon + A_h))^{-1} \, dz.
    \end{align*}
    Therefore, it suffices to show that,
    \begin{align*}
        &\Vert \frac{1}{2 \pi i} \int_{\gamma} F_n(z - (\lambda-\epsilon) \Delta t) (z + (\lambda+\epsilon) \Delta t)^{-1} (z - \Delta t(\lambda - \epsilon + A_h))^{-1} \, dz \Vert_{L(X_h)} \\
        &\qquad \leq \frac{1}{2\pi} ( \int_{\gamma_r} + \int_{\gamma_r^R} + \int_{\gamma_{R}^{\infty}} ) \vert F_n(z-(\lambda-\epsilon) \Delta t) \vert \vert z + (\lambda+\epsilon) \Delta t \vert^{-1} \Vert (z - \Delta t (\lambda - \epsilon + A_h))^{-1} \Vert_{L(X_h)} \, \vert dz \vert \\
        &\qquad \leq C e^{c (\lambda-\epsilon) t_n}.
    \end{align*}
    
    Using \eqref{eq:inequality-along-gamma-r} for the integral along $\gamma_r$, we therefore have, 
    \begin{align*}
        &\int_{\gamma_r} \vert F_n(z - (\lambda-\epsilon) \Delta t) \vert \vert z + (\lambda+\epsilon) \Delta t \vert^{-1} \Vert (z - \Delta t (\lambda - \epsilon + A_h))^{-1} \Vert_{L(X_h)} \, \vert dz \vert \\
        &\qquad \leq C \int_{\gamma_r} \bigg( n (1 - r - \lambda \Delta t)^{-1}(r^2 + (\lambda-\epsilon)^2 \Delta t^2) e^{c n r + c' (\lambda - \epsilon) t_n} \bigg) \bigg( ((\lambda+\epsilon) \Delta t - r)^{-1} \bigg) M' r^{-1} \, \vert dz \vert, 
    \end{align*}
    and setting $r = \lambda \Delta t$, we have for $N$ large that this is bounded by $C e^{c \lambda t_n - c' \epsilon t_n}$ for some $C, c, c' > 0$ (and we may redefine $\epsilon > 0$ to get an exponent of the form $c(\lambda - \epsilon)$). For the integral along $\gamma_r^R$, we set $R = 1$ and use \eqref{eq:inequality-along-gamma-0-R} to get,
    \begin{align*}
        &C \int_{\gamma_r^R} \bigg( e^{-c n \mathrm{Re}(z) + c (\lambda - \epsilon) t_n} n (\vert z \vert^2 + (\lambda-\epsilon)^2 \Delta t^2) \bigg) \bigg( \vert z + (\lambda+\epsilon) \Delta t \vert^{-1} \bigg) M' \vert z \vert^{-1} \, \vert dz \vert \\
        &\qquad \leq C e^{c(\lambda - \epsilon) t_n} \int_r^R \bigg( e^{-c n s} n (\cos(\delta')^{-2} s^2 + (\lambda-\epsilon)^2 \Delta t^2 ) \bigg) \bigg( (s + \lambda \Delta t)^{-1} \bigg) M' s^{-1} \, ds \\
        &\qquad \leq C e^{c (\lambda-\epsilon) t_n} \int_{\lambda t_n}^{\infty} e^{-c y} (\cos(\delta')^{-2} y^2 + \lambda^2 t_n^2 ) (y + \lambda t_n)^{-1} y^{-1} \, dy \\
        &\qquad \leq C e^{c (\lambda-\epsilon) t_n} e^{c' \lambda t_n},
    \end{align*}
    where we have set $y = s n$. For the final integral along $\gamma_{R}^{\infty}$, we have by \eqref{eq:inequality-along-R-infty}, 
    \begin{align*}
        &\int_{\gamma_R^{\infty}} \vert F_n(z-(\lambda-\epsilon) \Delta t) \vert \vert z + (\lambda+\epsilon) \Delta t \vert^{-1} \Vert (z - \Delta t (\lambda - \epsilon + A_h))^{-1} \Vert_{L(X_h)} \, \vert dz \vert \\
        &\qquad \leq C e^{c(\lambda - \epsilon) t_n} \int_{\gamma_{R}^{\infty}} \vert z + (\lambda+\epsilon) \Delta t \vert^{-1} \vert z \vert^{-1} \, \vert dz \vert \\
        &\qquad \leq C e^{c(\lambda-\epsilon) t_n} \int_1^{\infty} s^{-2} \, ds. 
    \end{align*}
    This finishes the proof. 
\end{proof}

\begin{proof}[Proof of Lemma \ref{lemma:fully-discrete-inequalities-3}]
    Let $F_n(z)$ be as in Lemma \ref{lemma:rational-approximation-errors}, and let $\epsilon, \delta'$ and $M'$ be as in Lemma \ref{lemma:shifted-discrete}. It suffices to show that, 
    \begin{align*}
        \Vert F_n(\Delta t A_h) \Vert_{L(X_h)} \leq C e^{c(\lambda-\epsilon) t_n} t_n^{-1} \Delta t. 
    \end{align*}
    Arguing as in the proof of Lemma \ref{lemma:fully-discrete-inequalities-1}, we have, 
    \begin{align*}
        F_n(\Delta t A_h) = \frac{1}{2\pi i} \int_{\gamma} F_n(z - (\lambda-\epsilon) \Delta t) (z - \Delta t (\lambda - \epsilon + A_h))^{-1} \, dz,
    \end{align*}
    where $\gamma = \gamma_r \cup \gamma_r^R \cup \gamma_R^{\infty}$ is as in the proof of Lemma \ref{lemma:fully-discrete-inequalities-2}. Therefore, we need to show that, 
    \begin{align*}
        &\Vert \frac{1}{2 \pi i} \int_{\gamma} F_n(z - (\lambda-\epsilon) \Delta t) (z - \Delta t(\lambda - \epsilon + A_h))^{-1} \, dz \Vert_{L(X_h)} \\
        &\qquad \leq \frac{1}{2\pi} ( \int_{\gamma_r} + \int_{\gamma_r^R} + \int_{\gamma_R^{\infty}} ) \vert F_n(z-(\lambda-\epsilon) \Delta t) \vert \Vert (z - \Delta t (\lambda - \epsilon + A_h))^{-1} \Vert_{L(X_h)} \, \vert dz \vert \\
        &\qquad \leq C e^{c(\lambda-\epsilon) t_n} t_n^{-1} \Delta t.
    \end{align*}
    For $n = 1$, this inequality holds by Lemma \ref{lemma:fully-discrete-inequalities-1}. For $n > 1$ we have by using \eqref{eq:inequality-along-gamma-r} for the integral along $\gamma_r$, and setting $r = n^{-1} < 1 - (\lambda-\epsilon) \Delta t$,
    \begin{align*}
        &\int_{\gamma_r} \vert F_n(z-(\lambda-\epsilon) \Delta t) \vert \Vert (z - \Delta t (\lambda - \epsilon + A_h))^{-1} \Vert_{L(X_h)} \, \vert dz \vert \\
        &\qquad \leq C e^{c(\lambda - \epsilon) t_n}\int_{\gamma_r} \bigg( e^{c' n r} n (r^2 + (\lambda-\epsilon)^2 \Delta t^2) \bigg) M' r^{-1} \vert dz \vert \\
        &\qquad \leq C e^{c(\lambda-\epsilon) t_n} n^{-1} (1 + \lambda^2 t_n^2) \\
        &\qquad \leq C e^{c(\lambda-\epsilon) t_n} t_n^{-1} \Delta t,
    \end{align*}
    since $n^{-1} = t_n^{-1} \Delta t$. For the next integral, we set $R = 1$ and compute using \eqref{eq:inequality-along-gamma-0-R}, 
    \begin{align*}
        &\int_{\gamma_r^R} \vert F_n(z-(\lambda-\epsilon) \Delta t) \vert \Vert (z - \Delta t (\lambda - \epsilon + A_h))^{-1} \Vert_{L(X_h)} \, \vert dz \vert \\
        &\qquad \leq e^{c(\lambda-\epsilon)t_n} \int_{\gamma_r^R} \bigg( e^{-c' n \mathrm{Re}(z)} n (\vert z \vert^2 + (\lambda-\epsilon)^2 \Delta t^2) \bigg) M' \vert z \vert^{-1} \, \vert dz \vert \\
        &\qquad \leq e^{c(\lambda - \epsilon)t_n} \int_r^R n e^{-c' n s} (\cos(\delta')^{-2} s^2 + (\lambda-\epsilon)^2 \Delta t^2) M' s^{-1} \, ds \\
        &\qquad \leq C e^{c(\lambda-\epsilon) t_n} \int_1^{\infty} n^{-1} e^{-cy} (\cos(\delta')^{-2} y^2 + (\lambda-\epsilon)^2 t_n^2) y^{-1} \, dy \\
        &\qquad \leq C e^{c(\lambda-\epsilon) t_n} t_n^{-1} \Delta t,
    \end{align*}
    with $y = sn$. For the final integral, we have by \eqref{eq:inequality-along-R-infty},
    \begin{align*}
        &\int_{\gamma_R^{\infty}} \vert F_n(z-(\lambda-\epsilon) \Delta t) \vert \Vert (z - \Delta t (\lambda - \epsilon + A_h))^{-1} \Vert_{L(X_h)} \, \vert dz \vert \\
        &\qquad \leq C e^{c(\lambda-\epsilon) t_n} \int_{\gamma_R^{\infty}} \bigg( ( 1 + c' \cos(\delta') \vert z \vert )^{-n} + e^{-\cos(\delta') n \vert z \vert } \bigg) M' \vert z \vert^{-1} \, \vert dz \vert \\
        &\qquad \leq \frac{C e^{c(\lambda-\epsilon) t_n}}{n-1} \\
        &\qquad \leq C e^{c(\lambda-\epsilon) t_n} t_n^{-1} \Delta t,
    \end{align*}
    as $n > 1$, and $t_n/t_{n-1} \leq 2$, which concludes this proof.
\end{proof}

\begin{proof}[Proof of Lemma \ref{lemma:fully-discrete-inequalities-4}]
    Let $\epsilon, \delta'$ and $M'$ be as in Lemma \ref{lemma:shifted-discrete}. Note that the inequality of this lemma holds by Lemma \ref{lemma:interpolation-inequality} combined with Lemma \ref{lemma:fully-discrete-inequalities-1}, if we are able to show that,
    \begin{align*}
        \Vert (S_h(t_n) - S_{h, \Delta t}(t_n) ) (\lambda + A_h) \Vert_{L(X_h)} \leq C e^{c (\lambda-\epsilon) t_n} t_n^{-1}.
    \end{align*}
    By Lemma \ref{lemma:discrete-semigroup-smoothing}, it suffices to show that,
    \begin{align*}
        \Vert S_{h, \Delta t}(t_n) (\lambda + A_h) \Vert_{L(X_h)} \leq C e^{c(\lambda-\epsilon) t_n} t_n^{-1}.
    \end{align*}
    For $n = 1$, we have,
    \begin{align*}
        (I + \Delta t A_h)^{-1} (\lambda + A_h) &= -\Delta t^{-1} (\lambda - \Delta t^{-1} - (\lambda + A_h))^{-1} (\lambda + A_h) \\
        &= -\Delta t^{-1} (-I + (\lambda - \Delta t^{-1}) (\lambda - \Delta t^{-1} - (\lambda + A_h))^{-1} ).
    \end{align*}
    By using condition \ref{cond:discrete-sectorial}, we therefore get,
    \begin{align*}
        \Vert (I + \Delta t A_h)^{-1} (\lambda + A_h) \Vert_{L(X_h)} \leq C \Delta t^{-1} = C t_1^{-1}. 
    \end{align*}
    
    For $n > 1$, arguing as in the proof of Lemma \ref{lemma:fully-discrete-inequalities-1}, \ref{lemma:fully-discrete-inequalities-2} and \ref{lemma:fully-discrete-inequalities-3}, we have
    \begin{align*}
        \Vert S_{h, \Delta t}(t_n) (\lambda + A_h) \Vert_{L(X_h)} &= \Vert \frac{1}{2 \pi i} \int_{\gamma} (1 + \Delta t z - (\lambda-\epsilon) \Delta t)^{-n} (z+\epsilon) (z - (\lambda - \epsilon + A_h))^{-1} \, dz \Vert_{L(X_h)} \\
        &\leq \frac{1}{2\pi} \bigg( \int_{\gamma_R} + \int_{\gamma_R^{\infty}} \bigg) \vert (1 + \Delta t z - (\lambda-\epsilon) \Delta t)^{-n} (z+\epsilon) \vert \\
        &\qquad \times \Vert (z - (\lambda - \epsilon + A_h))^{-1} \Vert_{L(X_h)} \, \vert dz \vert,
    \end{align*}
    where $\gamma_R = \{ R e^{i \theta}, \ \theta \in [\delta', \pi] \}$, and $\gamma_R^{\infty} = \{ s e^{\pm i \delta'}, \ s \geq R \}$. Setting $R = \lambda$ we have by \eqref{eq:inequality-along-gamma-r}
    \begin{align*}
        &\int_{\gamma_R} \vert (1 + \Delta t z - (\lambda-\epsilon) \Delta t)^{-n} (z+\epsilon) \vert \Vert (z - (\lambda - \epsilon + A_h))^{-1} \Vert_{L(X_h)} \, \vert dz \vert \\
        &\qquad \leq C e^{(c \lambda - c' \epsilon) t_n}.
    \end{align*}
    Further,
    \begin{align*}
        &\int_{\gamma_R^{\infty}} \vert (1 + \Delta t z - (\lambda-\epsilon) \Delta t)^{-n} (z+\epsilon) \vert \Vert (z - (\lambda - \epsilon + A_h))^{-1} \Vert_{L(X_h)} \, \vert dz \vert \\
        &\qquad \leq \int_{\gamma_R^{\infty}} (1 + \Delta t \cos(\delta') \vert z \vert - (\lambda-\epsilon) \Delta t )^{-n} M' \, \vert dz \vert \\
        &\qquad \leq C \int_{\lambda}^{\infty} (1 + \Delta t \cos(\delta) s - (\lambda-\epsilon) \Delta t)^{-n} \, ds \\
        &\qquad \leq \frac{C}{n-1} (1 + \Delta t \cos(\delta) \lambda - (\lambda-\epsilon) \Delta t)^{-n + 1} \Delta t^{-1} \\
        &\qquad \leq C e^{(c \lambda - c'\epsilon) t_n} t_n^{-1},
    \end{align*}
    by similar arguments as in (the last part of) the proof of Lemma \ref{lemma:fully-discrete-inequalities-3}. By redefining $\epsilon > 0$ we can get an exponent of the form $c(\lambda - \epsilon)$. This finishes the proof.
\end{proof}

\begin{proof}[Proof of Lemma \ref{lemma:fully-discrete-inequalities-5}]
    From the proof of Lemma \ref{lemma:fully-discrete-inequalities-4}, we have
    \begin{align*}
        \Vert (S_h(t_n) - S_{h,\Delta t}(t_n)) (\lambda + A_h) \Vert_{L(X_h)} \leq C e^{c(\lambda-\epsilon) t_n} t_n^{-1}.
    \end{align*}
    By combining this inequality with the inequality of Lemma \ref{lemma:fully-discrete-inequalities-3} and the second interpolation inequality of Lemma \ref{lemma:interpolation-inequality}, we have for $\alpha \in [0,1]$ (in particular $\alpha = 1/2$),
    \begin{align*}
        \Vert (S_h(t_n) - S_{h,\Delta t}(t_n)) (\lambda + A_h)^{\alpha} \Vert_{L(X_h)} \leq C e^{c(\lambda-\epsilon) t_n} t_n^{-1} \Delta t^{1-\alpha}.
    \end{align*}
    This finishes the proof. 
\end{proof}
\section{Differential operators on surfaces}\label{app:hypersurfaces}

By a connected and bounded $d$-dimensional $C^k$-surface, we understand:
\begin{enumerate}
    \item a connected and bounded $d$-dimensional hypersurface $\Gamma \subseteq \mathbb{R}^{d+1}$, where $\Gamma$ is open if $\Gamma$ has boundary, and closed otherwise, 
    \item it has a unit normal $\nu : \Gamma \to \mathbb{R}^{d+1}$, an associated tubular neighbourhood, $\mathcal{N}_{\epsilon} := \{ x = x_0 + \alpha \nu(x_0), \ x_0 \in \Gamma, \ \vert \alpha \vert < \epsilon \}$, and
    \item a signed distance function, $d \in C^k(\mathcal{N}_{\epsilon})$, satisfying: $\vert d(x) \vert = \inf_{y\in\Gamma}\vert x-y\vert$, and $\nabla d(x_0 + \alpha \nu(x_0)) = \nu(x_0)$, for any $x_0 \in \Gamma, \ \vert \alpha \vert < \epsilon$. 
\end{enumerate}
The tangent plane at every $x \in \Gamma$, denoted $T_x \Gamma$, is defined as the subspace of $\mathbb{R}^{d+1}$, orthogonal to the surface normal $\nu(x)$, and the tangential derivative of a function $f : \Gamma \to \mathbb{R}$, denoted $\nabla_{\Gamma} f (x) \in T_x \Gamma$, is defined by projecting the gradient in $\mathbb{R}^{d+1}$ of a smooth extension, $\Tilde{f} \in C^1(\mathcal{N}_{\epsilon})$ with $\Tilde{f}\vert_{\Gamma} = f$, onto the tangent plane. That is, we set,
\begin{align}\label{tangential-derivative-definition}
    \nabla_{\Gamma} f (x) = (\underline{\partial}_1 f, \dots, \underline{\partial}_{d+1} f) := (I - \nu(x) \nu(x)^T ) \nabla \tilde{f} (x), \quad x \in \Gamma.
\end{align}

To define higher order derivatives, we proceed as follows: since $\nu$ has a $C^{k-1}(\mathcal{N}_{\epsilon})$-extension (take $\nabla d$), the $k$'th row of the matrix $I - \nu \nu^T$ has a $C^{k-1}(\mathcal{N}_{\epsilon})$-extension, which we denote by $a_k \in C^{k-1}(\mathcal{N}_{\epsilon} ; \mathbb{R}^{d+1})$. For $\tilde{f} \in C^m(\mathcal{N}_{\epsilon})$ with $m \geq 2$, and $f := \tilde{f} \vert_{\Gamma}$, we can compute second order derivatives as 
\begin{align*}
    \underline{\partial}_{i} \underline{\partial}_{j} f (x) := (a_i(x) \cdot \nabla) (a_j(x) \cdot \nabla) \Tilde{f}(x), \quad x \in \Gamma, \quad i,j = 1, \dots, d+1,
\end{align*}
and similarly for higher order. The second order tangential derivative above has an extension to $C^n(\mathcal{N}_{\epsilon})$, where $n = \min(k-2, m - 2)$. For multi-indices $\alpha \in \mathbb{N}_0^{d+1}$, $\vert \alpha \vert \leq \min(k,m)$, and $\beta \in \mathbb{N}_0^{d+1}$ with $\beta_j \in \{ 1, \dots, d+1 \}$ and unique entries, an arbitrary tangential derivative of order $\vert \alpha \vert$ has the representation
\begin{align*}
    \underline{\partial}^{\alpha, \beta} f := \underline{\partial}_{\beta_1}^{\alpha_1} \cdots \underline{\partial}_{\beta_{d+1}}^{\alpha_{d+1}} f, \quad x \in \Gamma.
\end{align*}
Here, $\beta$ was included since the components of tangential derivatives does not commute.

We define $L^p(\Gamma), p \geq 1$ as the linear space of equivalence classes of measurable functions with,
\begin{align*}
    \Vert f\Vert_{L^p(\Gamma)}^p := \int_{\Gamma} \vert f\vert^p \, d\sigma < \infty,
\end{align*}
that are equal if they differ on sets of measure $0$, where $\sigma$ is the surface measure of $\Gamma$. For an $f \in L^p(\Gamma)$, we say that $g \in L^p(\Gamma)$, is a weak derivative if there exists a sequence of functions, $\Tilde{f}_n \in C^s(\mathcal{N}_{\epsilon})$, such that $\tilde{f}_n \vert_{\Gamma} \to f$ in $L^p(\Gamma)$, and $\underline{\partial}_{\beta}^{\alpha}(\tilde{f}_n\vert_{\Gamma}) \to g$ in $L^p(\Gamma)$, where $\sum_i \vert \alpha_i \vert = s$. 

Finally, we define $H^s(\Gamma)$ $s = 1, 2, \dots$ as the subspace of $L^2(\Gamma)$, (where we identify members and their weak derivatives $\sigma$-almost everywhere), that has all weak derivatives up to order $s$ in $L^2(\Gamma)$, and give it the norm,
\begin{align*}
    \Vert f \Vert_{H^s(\Gamma)}^2 := \Vert f \Vert_{L^2(\Gamma)}^2 + \sum_{\vert \alpha \vert \leq s, \beta} \Vert \underline{\partial}_{\beta}^{\alpha} f \Vert_{L^2(\Gamma)}^2. 
\end{align*}
$H^s(\Gamma)$ as defined above is complete, as it is the completion of $C^s(\mathcal{N}_{\epsilon})$ restricted to $\Gamma$, using the $H^s(\Gamma)$-norm. It is a Hilbert space when given the inner product,
\begin{align*}
    (f, g)_{H^s(\Gamma)} := (f, g)_{L^2(\Gamma)} + \sum_{\vert \alpha \vert \leq s, \beta} ( \underline{\partial}_{\beta}^{\alpha} f, \underline{\partial}_{\beta}^{\alpha} g)_{L^2(\Gamma)}.
\end{align*}

We define the Laplace--Beltrami operator for $u \in H^2(\Gamma)$ by $\Delta_{\Gamma} u := \nabla_{\Gamma} \cdot \nabla_{\Gamma} u$. The following lemma is useful for simplifying the expression for the $H^2(\Gamma)$-norm above.
\begin{lemma}[Lemma 3.2 in \cite{2013-elliott}]\label{lemma:cross-terms}
    Let $\Gamma \in C^2$ be compact, and $u \in H^2(\Gamma)$. Then,
    \begin{align*}
        \Vert u \Vert_{H^2(\Gamma)}^2 \leq C \Vert u \Vert_{H^1(\Gamma)} + \Vert \Delta_{\Gamma} u \Vert_{L^2(\Gamma)}^2,
    \end{align*}
    for some $C > 0$.
\end{lemma}
By the divergence theorem and a density argument, we have for $\Gamma \in C^2$ compact (and therefore boundaryless),
\begin{align*}
    \int_{\Gamma} \Delta_{\Gamma} u v \, d \sigma = - \int_{\Gamma} \nabla_{\Gamma} u \cdot \nabla_{\Gamma} v \, d \sigma, 
\end{align*}
for any $u \in H^2(\Gamma)$ and $v \in H^1(\Gamma)$. This observation combined with Lemma \ref{lemma:cross-terms} yields the following estimate. 
\begin{lemma}\label{lemma:elliptic-regularity}
    Let $\Gamma \in C^2$ be compact, and $u \in H^2(\Gamma)$. Then,
    \begin{align*}
        \Vert u \Vert_{H^2(\Gamma)} \leq C \Vert (I - \Delta_{\Gamma}) u \Vert_{L^2(\Gamma)},
    \end{align*}
    for some $C > 0$. 
\end{lemma}
\begin{proof}
    By Lemma \ref{lemma:cross-terms} it suffices to show that,
    \begin{align*}
        \Vert u \Vert_{H^1(\Gamma)}^2 + \Vert \Delta_{\Gamma} u \Vert_{L^2(\Gamma)}^2 \leq C \Vert (I - \Delta_{\Gamma}) u \Vert_{L^2(\Gamma)}^2. 
    \end{align*}
    We have by the divergence theorem,
    \begin{align*}
        \int_{\Gamma} (u - \Delta_{\Gamma} u)^2 \, d\sigma = \int_{\Gamma} u^2 \, d\sigma + 2 \int_{\Gamma} \nabla_{\Gamma} u \cdot \nabla_{\Gamma} u \, d\sigma + \int_{\Gamma} (\Delta_{\Gamma} u)^2 \, d \sigma,
    \end{align*}
    which gives the inequality. 
\end{proof}

\end{document}